\theoremstyle{plain}
\newtheorem{theorem}{Theorem}[section]
\newtheorem{lemma}[theorem]{Lemma}
\newtheorem{proposition}[theorem]{Proposition}
\newtheorem{corollary}[theorem]{Corollary}
\theoremstyle{definition}
\newtheorem{definition}[theorem]{Definition}
\newtheorem{remark}[theorem]{Remark}
\newtheorem{example}[theorem]{Example}
\numberwithin{equation}{section}
\newcommand\bovermat[2]{%
	\makebox[-2pt][l]{$\smash{\overbrace{\phantom{%
					\begin{bmatrix}#2\end{bmatrix}}}^{\text{#1}}}$}#2}
\newcommand\fantome[1]{}
\def\Z{\mathbb Z}
\def\opi{\overline \pi}
\def\tpi{\widetilde{\pi}}
\newcommand{\tr}{\mathrm{tr}}
\newcommand{\inorm}[1]{{\lvert #1 \rvert}}
\DeclareMathOperator{\GL}{GL}
\DeclareMathOperator{\Log}{Log}
\DeclareMathOperator{\Frob}{Frob}
\DeclareMathOperator{\Ker}{Ker}
\DeclareMathOperator{\Id}{Id}
\DeclareMathOperator{\Lie}{Lie}
\newcommand{\bbeta}{\mathcal{B}}
\newcommand{\F}{\mathbb{F}}
\newcommand{\C}{\mathbb{C}}
\newcommand{\bm}{\mathbf{m}}
\newcommand{\bn}{\mathbf{n}}
\newcommand{\bz}{\mathbf{z}}
\newcommand{\zz}{\bold{z}}
\newcommand{\inv}{\ensuremath ^{-1}}
\newcommand{\isom}{\ensuremath \cong}
\newcommand{\TT}{\mathbb{T}}
\DeclareMathOperator{\Exp}{Exp}
\DeclareMathOperator{\Res}{Res}
\DeclareMathOperator{\Mat}{Mat}
\newcommand{\twist}{^{(1)}}
\newcommand{\twistinv}{^{(-1)}}
\newcommand{\twistk}[1]{^{(#1)}}
\definecolor{ForestGreen}{rgb}{0.0, 0.5, 0.0}
	\author{O\u{g}uz Gezm\.{i}\c{s}}
\address{Department of Mathematics, National Tsing Hua University, Hsinchu City 30042, Taiwan R.O.C.}
\email{gezmis@math.nthu.edu.tw}
\author{Nathan Green}
\address{Department of Mathematics and Statistics, Louisiana Tech University, Ruston, LA, 71270, USA}
\email{ngreen@latech.edu}
\date{\today}
\keywords{Drinfeld modules, $t$-motives, Mellin transform}
	\subjclass[2010]{Primary 11G09, 11M38}
\title{Mellin transform formulas for Drinfeld modules}
\begin{document}
\maketitle
\begin{abstract} We introduce formulas for the logarithms of Drinfeld modules using a framework recently developed by the second author. We write the logarithm function as the evaluation under a motivic map of a product of rigid analytic trivializations of $t$-motives. We then specialize our formulas to express special values of Goss $L$-functions as Drinfeld periods multiplied by rigid analytic trivializations evaluated under this motivic map. We view these formulas as characteristic-$p$ analogues of integral representations of Hasse-Weil type zeta functions. We also apply this machinery  for Drinfeld modules tensored with the tensor powers of the Carlitz module, which serves as the Tate twist of a Drinfeld module.
\end{abstract}

\section{Introduction}
\subsection{Motivation}
The main result of this paper gives a positive-characteristic function field analogue of certain integral representations of Hasse-Weil type zeta functions. In order to make a comparison with our new results, we remind the reader first  some of the classical theory. The starting point is one of the original proofs of the functional equation and analytic continuation of the Riemann zeta function. The classical theta function, for $t\in \mathbb{C}$ with $\Re(t)>0$
\[\Theta(t) = \sum_{n\in \Z} e^{-\pi n^2 t},\]
satisfies the functional equation
\begin{equation}\label{E:Theta}
\Theta(t) = t^{-1/2} \Theta(1/t).
\end{equation}
We also recall the definition of the Mellin transform for a real-valued function $f(x)$ with suitable decay conditions at $x=0$ and $x=\infty$,
\begin{equation}\label{D:Mellin Def}
M(f)(s) = \int_0^\infty f(x)x^{s-1}dx,
\end{equation}
for suitable $s\in \C$. If we take the Mellin transform of a normalized version of $\Theta(t)$ (and account correctly for convergence, which is nontrivial), we get
\begin{equation}\label{E:Mellin transform of Theta}
\xi(s) = M\left (\frac{\Theta(t) - 1}{2}\right )(s/2),
\end{equation}
where $\xi(s) = \pi^{-s/2} \Gamma(s/2) \zeta(s)$ is the completed zeta function. Further, if we take the Mellin transform of \eqref{E:Theta} then we recover the functional equation for the Riemann zeta function,
\[\xi(s) = \xi(1-s).\]
These derivations also establish the analytic continuation of the Riemann zeta function. We refer the reader to \cite[\S 7.1]{Neu} for details on such constructions.

With this theory as our base point, there are several important directions we can generalize these ideas. First, if we replace the classical theta function with theta series involving characters, then the same theory gives the functional equation and analytic continuation of Dirichlet $L$-functions with characters. A further generalization to higher dimensional theta series then gives the same theory for Dedekind zeta functions. Again, all this theory is detailed in \cite[\S7.2-7.5]{Neu}.

On the other hand, we can instead investigate Hasse-Weil zeta functions attached to algebraic varieties (the previous case of Dedekind zeta functions can be seen as a special case in this setting --- that discussion is outside the scope of this introduction). In this setting, at least for elliptic curves defined over the rational numbers, Wiles's modularity theorem \cite{Wil95} shows that such zeta functions are given as the Mellin transform of special meromorphic functions, in this case modular forms. There are vast generalizations of this theory to motives and profound conjectures that come with them, such as Beilinson's conjectures (see \cite{DS91}) and various aspects of the Langlands program (see \cite{Lom18}).

Our results in this paper establish an analogy to those described above in the positive characteristic function field setting. We prove that certain special values of $L$-functions can be realized as an algebraic interpolation of a Mellin transform of certain special functions. On the one hand, these $L$-values are certainly of Hasse-Weil type, because they have an Euler product representation given by the characteristic polynomial of the Frobenius acting on certain modules (see \eqref{E:L func Euler expansion2} and \eqref{E:L func Euler expansion}). On the other hand, our formulas indicate that these $L$-values can be represented as a Mellin-type transform, not of Drinfeld modular forms as one might expect, but rather of rigid analytic trivializations of Drinfeld modules, which bear several similarities to classical theta function. Thus the results we present here should be viewed as a hybrid between the two generalizations given above: They express Hasse-Weil type $L$-values in terms of a Mellin transform of an analogue of the classical theta function. Whether there is a connection between the constructions in this paper and Drinfeld modular forms is an open question. We provide a few comments on this question in Remark \ref{R:Log* and Drinfeld Modular Forms}.

Before continuing we say a few words about the difficulty and significance of our results. In the classical setting, one uses analytic ideas (cycle integration) to connect theta functions and related objects directly to $L$-functions and zeta functions. Our setting occurs in characteristic $p$, where it is cumbersome to work with characteristic-$p$ valued measures and integration (see Remark \ref{R:2} for the comparison of our results with the already existing literature). Our proofs here provide an algebraic alternative to this integration theory which takes a round-about path to connect special values of $L$-functions with the analogue of the classical theta function. Namely, we connect $L$-function values to logarithm values using the work of Taelman \cite{Tae12} and the first author \cite{Gez21}. Our new formulas in this paper then connect values of the logarithm to rigid analytic trivializations of Anderson $t$-motives. Works of Maurischat \cite{Mau22}, Pellarin \cite{Pel12} and others then allow us to connect rigid analytic trivializations to periods and Anderson generating functions, which (as we explain below) are an analogue of theta functions. The main new technical advances in this paper include modifying a crucial construction from the work of the second author \cite{Gre22} to a tensor product of motives (this is our \eqref{E:Gn tensor def}), a very careful analysis of the convergence properties of \eqref{E:Gn tensor factorization Drinfeld} carried out in \S\ref{SS:alpha convergence}, as well as a particular choice of $t$-motive bases (discussed in \S\ref{SS:And t-motives}-\ref{SS:Dual t-motives}) to account for the $\Theta_{\phi,\tau}$ matrix in \eqref{D:Theta tau}.

\subsection{The Mellin transform of Drinfeld modules and $L$-functions}\label{SS:intro1.2}
We now briefly describe our main results, after which we will make some more precise comparisons to the classical theory. Let $q=p^r$ be a prime power, and let $A := \F_q[\theta]$ and $K:=\F_q(\theta)$. Let $K_\infty$ be the completion of $K$ at the infinite place with respect to the norm $\inorm{\cdot}$, normalized so that $\inorm{\theta}=q$. This completion equals the formal Laurent series ring $\F_q((1/\theta))$ with coefficients in $\mathbb{F}_q$. Let $\C_\infty$ be a completion of an algebraic closure of $K_\infty$. Consider the non-commutative power series ring $\mathbb{C}_{\infty}[[\tau]]$ which is defined subject to the condition $\tau c=c^q\tau$ for all $c\in \mathbb{C}_{\infty}$. We also let $\mathbb{C}_{\infty}[\tau]$ be the subring of $\mathbb{C}_{\infty}[[\tau]]$ consisting of polynomials in $\tau$. We define \textit{a Drinfeld module $\phi$ of rank $r$} to be an $\mathbb{F}_q$-algebra homomorphism $\phi:A \to \C_\infty[\tau]$ given by 
\begin{equation}\label{E:drinfeld}
\phi_{\theta}:=\phi(\theta):=\theta+k_1\tau+\cdots+k_r\tau^r, \ \ k_r\neq 0.
\end{equation}
We call each $k_i$ for $1\leq i \leq r$ \textit{a coefficient of $\phi$}. We also consider $\exp_\phi$ and $\log_\phi$, which are elements in $\mathbb{C}_{\infty}[[\tau]]$, to be the exponential and logarithm functions associated to $\phi$ (see \eqref{E:Exp def} for details). The function $\exp_\phi$ has a kernel $\Lambda_\phi$ which is a free $A$-module of rank $r$, called the period lattice of $\phi$. Let us denote a set of generating periods as $\lambda_1,\dots,\lambda_r$. The comparison is often made between a Drinfeld module $\phi$ and an elliptic curve $E$ defined over $\C$. The periods $\lambda_1,\dots,\lambda_r$ should then be compared with the Weierstrass periods of $E$ and the exponential function $\exp_\phi$ should be compared to the Weierstrass-$\wp$ function.

We now briefly define Anderson generating functions which are intimately connected with periods. For a given period $\lambda_i$, define
\[
f_i:=\sum_{i= 0}^{\infty}\exp_{\phi}\left(\frac{\lambda_i}{\theta^{i+1}}\right)t^i \in \C_\infty[[t]],
\]
where $t$ is a commuting variable (in fact, $f_i$ is in a Tate algebra, see \S\ref{SS:Anderson Generating Functions}). We then define the matrix
\[\Upsilon:=\begin{pmatrix} f_1 & \cdots & \cdots & f_r\\
f_1^{(1)} & \cdots & \cdots & f_r^{(1)}\\
\vdots  & & & \vdots\\
f_1^{(r-1)} & \cdots & \cdots &f_r^{(r-1)}
\end{pmatrix}\in \GL_r(\mathbb{T}),\]
where $\cdot \twistk{k}$ is the $k$-fold application of a Frobenius twisting automorphism (again, see \S\ref{SS:Anderson Generating Functions}). The matrix $\Upsilon$ is constructed to be a rigid analytic trivialization for the Drinfeld module $\phi$. Namely, there is a naturally defined matrix $\Theta \in \Mat_{r\times r}(\C_\infty)$ coming from the $t$-motive associated to $\phi$ such that we have the functional equation
\[\Theta \Upsilon = \Upsilon\twist.\]
Let $V\in \Mat_{r\times r}(\C_\infty)$ be a matrix of constants defined in \eqref{E:V def} and let 
\begin{equation}\label{E:Psidef}
\Psi:=V^{-1}((\Upsilon^{(1)})^{\tr})^{-1}.
\end{equation}
We explain all this theory more extensively in \S\ref{SS:And t-motives}.

One ingredient to state our first main theorem comes from a recent paper of the second author \cite{Gre22}. There, the second author develops a new map $\delta_{1,\bz}^{M_{\phi}}$ for a parameter $\bz\in \C_\infty$ from $M_\phi$, the Anderson $t$-motive associated to $\phi$, to $\C_\infty$ which recovers the structure of the Drinfeld module $\phi$ (see \eqref{D:delta_1 map} for a precise definition). This map $\delta_{1,\bz}^{M_{\phi}}$ should be viewed as an algebraic interpolation of cycle integration; in \cite[Cor 5.10]{Gre22} the second author proves an algebraic analogue of the Mellin transform formula which relates the exponential function with \textit{the Carlitz zeta values $\zeta_{A}(n)$} given by  
\[
\zeta_A(n):=\sum_{\substack{a\in A\\ a \text{ monic}}}\frac{1}{a^n}\in K_{\infty}.
\]

On the other hand, letting $\mathbb{M}$ be a certain subring of $\mathbb{C}_{\infty}[[\tau]]$, we also introduce a continuous and injective map $\varphi:\mathbb{M}\to \Mat_{1\times r}(\mathbb{T})$ in \S\ref{SS:varphi map}  which extends the isomorphism between $M_{\phi}$ and $\Mat_{1\times r}(\mathbb{C}_{\infty}[t])$. In particular, this map provides a link between the $\tau$-structure and the $t$-structure on the extension of $M_{\phi}$.

We now let $\mathcal{M}:=\varphi^{-1}$ and for any $\zz\in \mathbb{C}_{\infty}$, set $\mathcal{M}_{\zz}:=\delta_{1,\zz}^{M_{\phi}}\circ \varphi^{-1}$. For any Drinfeld module $\phi$ of rank $r$ with certain conditions on its coefficients, our first main theorem (restated as Theorem \ref{T:logdrinfeld} later) relates the logarithm series $\log_{\phi}$ and the value $\log_{\phi}(\zz)$, whenever  $\zz\in \mathbb{C}_{\infty}$ is in the domain of convergence, to the map $\mathcal{M}$ and $\mathcal{M}_{\zz}$ evaluated at a product of rigid analytic trivializations.

\begin{theorem}\label{T:Main theorem intro Drinfeld case}
Let $\phi$ be a Drinfeld module given by
	\[\phi_{\theta} = \theta + k_1 \tau + \dots + k_r\tau^r\]
so that $\inorm{k_i}\leq 1$ for each $1\leq i \leq r-1$ and $k_r\in \mathbb{F}_q^{\times}$.  Let $\opi := (\lambda_1,\dots,\lambda_r)$ be the vector of fundamental periods of $\phi$. Then
\[
\log_{\phi}=\mathcal{M}\left(-\frac{1}{t-\theta}\opi(\Psi^{\tr})^{(-1)}\right).
\]
Moreover,  for any $\zz\in \mathbb{C}_{\infty}$ in the domain of convergence of $\log_{\phi}$, we have
\[
	\log_{\phi}(\zz)=\mathcal{M}_{\zz}\left(-\frac{1}{t-\theta}\opi(\Psi^{\tr})^{(-1)}\right).
	\]
\end{theorem}
\begin{remark}  By using an idea in an unpublished note of Anderson (see also \cite[Prop. 3.1.3]{ABP04} for a result in the same direction), one can show that each entry of $\Psi$ consists of a power series of $t$ which converges at any element of $\mathbb{C}_{\infty}$, which we call \textit{an entire function of $t$}. Since the twisting is an automorphism on the space of entire functions of $t$, each entry of $\Psi^{(-1)}$ is also an entire function of $t$. Moreover, using the definition of $\Upsilon$ and the analytic properties of Anderson generating functions (see for example \cite[Prop. 3.2]{EGP14}), one can show that each entry of 
\[
-\frac{1}{t-\theta}\opi(\Psi^{\tr})^{(-1)}
\]
can be analytically continued to an entire function of $t$. Indeed, in Proposition \ref{P:Image varphi entire} we show that the image $\varphi(\mathbb M)$ is contained in the space of entire functions and we show in Proposition \ref{P:inj} that $\varphi$ is injective. However, we note in Remark \ref{R:techn} that $\varphi$ is not surjective onto the space of entire functions. Therefore, the maps $\mathcal{M}$ and $\mathcal{M}_{\zz}$ can be regarded as Mellin transforms for entire functions in the image of $\varphi$ --- the exact nature of what this image is is subtle, and we will pursue this in future work. We would like to thank the referee for pointing out this perspective to the authors.
\end{remark}
%\begin{remark}
%	Note that the conditions on the coefficients $k_i$ in Theorem \ref{T:Main theorem intro Drinfeld case} as well as in our following Corollary \ref{C:Main Corollary Drinfeld modules Intro} is due to a technical obstacle related to a certain constant depending on the Drinfeld module being equal to one (see the beginning of \S3 and Remark \ref{R:B}). This restriction is necessary for a calculation to determine the explicit value of a certain quantity $\alpha$ in Theorem \ref{T:alpha}. In the general case, since the matrix $B$ defined in \eqref{D:B def}, originally introduced in \cite[(3.6)]{KP23}, has non-constant coefficients in the Tate algebra, we can not use our methods in \S3.4 to make the same calculation. However, a promising way of improving our techniques which is briefly sketched in Remark \ref{R:techn} may provide Mellin transform type formulas for more general Drinfeld modules.
%\end{remark}

To describe our next result, in what follows, we briefly describe Goss $L$-functions attached to $\phi$ introduced by Goss \cite{Gos92}, inspired by the ideas of Gekeler \cite[Rem. 5.10]{Gek91}. For a given monic irreducible polynomial $w\in A$, we set $K_w$ to be the completion of $K$ at the place corresponding to $w$. We let $(\rho_w)$ be a family of continuous representations of the Galois group of $K^\text{sep}/K$ 
%on some finite dimensional $K_w$-vector space $V_w$
 which is strictly compatible in the usual sense, meaning that the characteristic polynomial 
\[P_v(X) := \det(1-X\cdot \rho_w(\Frob_v))\]
of the Frobenius at a place $v\neq w$ of $K$ acting on the $w$-adic Tate module of $\phi$ is independent of the choice of prime $w$ and has coefficients in $A$ (along with a ramification condition - see \cite[\S8.10]{Gos96} for full details). We further let $P_v(X)=(1-a_1X)\cdots (1-a_rX)$ for some $a_1,\dots,a_r$ lying in a fixed algebraic closure of $K$ and set 
\[
P_v^{\vee}(X):=(1-a_1^{-1}X)\cdots (1-a_r^{-1}X).
\]
 We then define \textit{the $L$-function of $\phi$} to be 
\begin{equation}\label{E:L func Euler expansion2}
L(\phi,n) := \prod_{v} P_v(v^{-n})\inv,
\end{equation}
and \textit{the  dual $L$-function of $\phi$} by
\begin{equation}\label{E:L func Euler expansion}
L(\phi^{\vee},n) := \prod_{v} P_v^{\vee}(v^{-n})\inv,
\end{equation}
where the product runs over all the finite places of $A$. In this definition, by \cite[Cor. 3.6]{ChangEl-GuindyPapanikolas}, we know that $L(\phi,n)$ converges in $K_{\infty}$ for all $n\in \mathbb{Z}_{\geq 1}$ and $L(\phi^{\vee},n)$ converges in $K_{\infty}$ for all $n\in \mathbb{Z}_{\geq 0}$ (there is a way to extend the domain of such $L$-functions to an analogue of the upper half plane --- since we do not use that here, we refer the reader to \cite[\S8.1]{Gos96}). We also note that when $\phi$ is \textit{the Carlitz module} given by $C_{\theta}:=\theta+\tau$, we have, for any positive integer $n$, $L(C^{\vee},n-1)=\zeta_{A}(n)$. We refer the reader to \cite{Gos96} and \cite{Gez21} for full details on these constructions.

If we set $\zz = 1$ in the previous theorem and choose a Drinfeld module $\phi$ as in Theorem \ref{T:Main theorem intro Drinfeld case} so that $k_i\in \mathbb{F}_q$ for each $1\leq i \leq r-1$, then the value of the logarithm becomes a special value of the (dual) Goss $L$-function of $\phi$. As a corollary to Theorem \ref{T:Main theorem intro Drinfeld case}, we get the following (restated as Corollary \ref{C:Mellin for Drinfeld modules} later).

\begin{corollary}\label{C:Main Corollary Drinfeld modules Intro}
Let $\phi$ be a Drinfeld module as in Theorem \ref{T:Main theorem intro Drinfeld case} so that each $k_i\in \mathbb{F}_q$ and $\zz=1$. Then we have
\[L(\phi^{\vee},0) =\mathcal{M}_{\zz}\left(-\frac{1}{t-\theta}\opi(\Psi^{\tr})^{(-1)}\right).\]
\end{corollary}

\begin{remark}\label{R:2} It is appropriate to make a brief comparison between our formulas and the results in \cite{Gos91} and \cite{Gos92b} on the Mellin transform in the function field setting. Let $A_v$ be the completion of $A$ at $v$. Inspired by the construction of formal $p$-adic Mellin transform, in \cite[\S3]{Gos91}, Goss developed the theory of $A_v$-valued measures on $A_v$ and defined the Mellin transform of the Carlitz zeta value $\zeta_{A}(n)$ to be an element in \textit{the divided power series ring} (see \cite[\S5]{Gos89} for the details on divided power series). Although its coefficients are arithmetically interesting and related to the Carlitz zeta values (see \cite[Thm. VII]{Tha90}), there is no immediate relation to $\zeta_{A}(n)$ as in Corollary \ref{C:Main Corollary Drinfeld modules Intro}. Hence our construction seems to be better-suited in this direction. Later on, using the seminal work of Teitelbaum \cite{Tei91} relating $v$-adic measures to Drinfeld cusp forms, Goss \cite[\S4]{Gos92b} defined the Mellin transform of a Drinfeld cusp form $f$ as a continuous function $L_f$ on $\mathbb{Z}_p$ whose values are attained in a finite extension $K_{\infty}$. However, several aspects of the theory is still missing such as the link between $f$ and the functional equation of $L_f$ as well as the appearance of $L_f$ as a Dirichlet series summed over the monic polynomials in $A$, which could be more parallel to the classical setting. It would be interesting to relate our construction in the present paper to the setting of Drinfeld modular forms to have a better understanding of  the Mellin transformation (see Remark \ref{R:Log* and Drinfeld Modular Forms} for the discussion on a potential link to Drinfeld modular forms).
\end{remark}

\subsection{Comparison with the classical theta functions}
Having stated our first two main results, we now make some precise comparisons between our setting and the classical theory discussed above. Fixing a $(q-1)$-st root of $-\theta$, we define \textit{the Carlitz fundamental period} by
\[
\tilde{\pi}:=\theta(-\theta)^{1/(q-1)}\prod_{j=1}^{\infty}\left(1-\theta^{1-q^j}\right)^{-1}\in \mathbb{C}_{\infty}^{\times}.
\]
In the case of the Carlitz module $C$, our main results discussed above reduce to a formula from \cite[Cor. 5.10]{Gre22}
\begin{equation}\label{E:Carlitz Mellin transform}
L(C^{\vee},0)=\zeta_{A}(1)=\mathcal{M}_{\zz}(-\tpi \Omega),
\end{equation}
where $\Omega := 1/\omega_C\twist$ is defined in \eqref{D:omega_C}. In this context, the function  $\Omega$ should be viewed as an analogue of the theta function $\Theta(z)$ for two reasons:
\begin{enumerate}
\item Taking the function field Mellin transform of $\Omega$ produces zeta values similar to formula \eqref{E:Mellin transform of Theta}.
\item It satisfies a similar functional equation to the classical theta function. Namely,
\begin{equation}\label{E:tOmega}
t\cdot \Omega = C_{\theta}^*(\Omega),
\end{equation}
where $C_{\theta}^*$ is the adjoint Carlitz operator $C_{\theta}^*(z) := \theta z + z^{1/q}$ (compare with \eqref{E:Theta}).
\end{enumerate}
\begin{remark}\label{R:techn} 
We note here that taking the Mellin transform of \eqref{E:Theta} (after some adjustments for convergence) gives the functional equation for the completed Riemann zeta function $\xi(s)=\xi(1-s)$. It is therefore natural to ask about what happens when we combine the functional equation \eqref{E:tOmega} with our function field Mellin transform \eqref{E:Carlitz Mellin transform}. We continue to let $\zz=1$ and consider the Anderson $t$-motive $M_C$ corresponding to the Carlitz module (see \S\ref{SS:t-motive for Drinfeld} for more details on $M_C$). 

Let $L_0:=1$ and for $n\geq 1$, define $L_n:=(\theta-\theta^{q^n})\cdots(\theta-\theta^q)\in A$.
Note that by \cite[Rem. 5.13]{APT16}, we have
\begin{equation}\label{E:piomegaidentity}
-\tilde{\pi}\Omega=1+\sum_{i=0}^{\infty}\frac{(t-\theta)\cdots (t-\theta^{q^{i}})}{L_{i+1}}.
\end{equation}
In what follows, using the identification $\tau^{i}=(t-\theta)\cdots(t-\theta^{q^{i-1}})$ via the isomorphism $M_{C}\cong \mathbb{C}_{\infty}[\tau]$  for each $i\geq 0$, we write $ (-\tilde{\pi}\Omega)^{(-1)} $ as a limit of certain elements in $\mathbb{C}_{\infty}[\tau]$. Note that
\begin{multline*}
\left(\frac{(t-\theta)(t-\theta^q)\cdots (t-\theta^{q^{i}})}{L_{i+1}}\right)^{(-1)}=\frac{(t-\theta^{1/q})\tau^i}{L_{i+1}^{(-1)}}
=\frac{\tau^it}{L_{i+1}^{(-1)}}-\frac{\theta^{1/q}\tau^i}{L_{i+1}^{(-1)}}=\frac{\theta^{q^i}\tau^i}{L_{i+1}^{(-1)}}+\frac{\tau^{i+1}}{L_{i+1}^{(-1)}}-\frac{\theta^{1/q}\tau^i}{L_{i+1}^{(-1)}}\\
=\frac{(\theta^{q^i}-\theta^{1/q})\tau^i}{L_{i+1}^{(-1)}}+\frac{\tau^{i+1}}{L_{i+1}^{(-1)}}=\frac{\tau^{i+1}}{L_{i+1}^{(-1)}}-\frac{\tau^i}{L_{i}^{(-1)}}.
\end{multline*}
Thus, since $L_0=L_0^{(-1)}=1$, we obtain a telescoping sum in the partial sums:
\[
(-\tilde{\pi}\Omega)^{(-1)}=-\tilde{\pi}^{1/q}\Omega^{(-1)}=\lim_{d\to\infty} \tau^0+\sum_{i=0}^{d}\left(\frac{\tau^{i+1}}{L_{i+1}^{(-1)}}-\frac{\tau^i}{L_{i}^{(-1)}}\right)=\lim_{d\to\infty} \frac{\tau^{d+1}}{L_{d+1}^{(-1)}}.
\]
Moreover, applying $\delta_{1,\zz}^{M_C}$ to the above  combined with the fact that $\frac{1}{L_{d+1}^{(-1)}}\to 0$ as $d\to \infty$ shows that $(-\tilde{\pi}\Omega)^{(-1)}$ is in the kernel of $\delta_{1,\zz}^{M_C}$, and thus so is $\Omega\twistinv$. 

Here, we are indebted to make an important remark. Using the nonarchimedean norm on $\mathbb{M}$ defined in \S\ref{SS:varphi map}, we see that the limit $\lim_{d\to \infty} \tau^0+\sum_{i=0}^{d}\left(\frac{\tau^{i+1}}{L_{i+1}^{(-1)}}-\frac{\tau^{i}}{L_{i}^{(-1)}}\right)$ of partial sums does not converge in $\mathbb{M}$ and hence $ (-\tilde{\pi}\Omega)^{(-1)}$ is not in the range of $\varphi$. Therefore, in our next calculations, we avoid using the map $\varphi$ and we directly identify $-\tilde{\pi}\Omega$ with its unique preimage $\log_{C}$ under the map $\varphi$.

Now, noting that $\theta \zz$ lies in the domain of convergence of $\log_{C}$, we have the transformation
\[\delta_{1,\zz}^{M_C}(-t\tpi\Omega) = \delta_{1,C_\theta(\zz)}^{M_C}(-\tpi\Omega) = \delta_{1,\theta \zz}^{M_C}(-\tpi\Omega) + \delta_{1, \zz\twist}^{M_C}(-\tpi\Omega) = \log_C(\theta) + \zeta_A(1)\]
where the first equality follows from \cite[Prop. 2.15(2)]{Gre22}. Hence, we find that
\[\delta_{1,\zz}^{M_C}(-t\tpi\Omega) =\delta_{1,\zz}^{M_C}(-\tpi C_\theta^*(\Omega)) = \delta_{1,\zz}^{M_C}(-\theta\tpi \Omega) - \tpi\delta_{1,\zz}^{M_C}(\Omega\twistinv) =\theta\delta_{1,\zz}^{M_C}(-\tpi \Omega) =\theta\zeta_A(1).\]
After recalling Carlitz's formula that $\log_C(1) = \zeta_A(1)$, we arrive at
\[\log_C(C_\theta(1)) = \theta \log_C(1),\]
so we have recovered the functional equation for the Carlitz logarithm. We suspect that a similar phenomenon happens in the case of Drinfeld modules and more general $t$-modules. In fact, it seems possible that one could reverse the direction of these calculations to prove our logarithm formulas in \S \ref{S:Log of Drinfeld Modules} in an alternate way. However, there are many details to work out so we leave this as a question to be answered in future work.
\end{remark}

In the case of Drinfeld modules of rank $r$ discussed in the present paper, the matrix $\Psi$ from Corollary \ref{C:Main Corollary Drinfeld modules Intro} is a higher-rank generalization of $\Omega$ discussed above and should be viewed as a higher dimensional theta function. Indeed, it satisfies the functional equation
\begin{equation}\label{E:Phi func equation intro}
\Phi \Psi = \Psi\twistinv,
\end{equation}
where $\Phi\in \Mat_{r\times r}(K[t])$ is defined in \eqref{E:Phi def Drinfeld modules}. Analyzing this functional equation shows that if we denote the top row of $\Psi$ as $(g_1,\dots,g_r)$, then each $g_i$ satisfies
\begin{equation}
t\cdot g_i = \phi_{\theta}^*(g_i),
\end{equation}
where $\phi^*$ is the adjoint of the Drinfeld module $\phi$ given by $\phi^*_{\theta} := \theta + k_1^{1/q} \tau\inv + \dots + k_r^{1/q^r}\tau^{-r}$ (see \cite[\S4.14]{Gos96} for more details). Our Corollary \ref{C:Main Corollary Drinfeld modules Intro} then says that taking the function field Mellin transform of a vector of periods multiplied by this analogue of a theta function gives a Hasse-Weil type zeta value.

\subsection{Tate twists of Drinfeld modules}\label{SS:Tate twists intro}
We also give a version of our main theorems for Drinfeld modules tensored with the positive powers of the Carlitz module. This is akin to taking the Tate twist of a motive, and shifts the value of the corresponding $L$-function allowing us to get formulas for values $n$ larger than $1$. Our result provides an interesting link between certain coordinates of the logarithms of Tate twists of Drinfeld modules and their periods as well as quasi-periods.

In this setting, let $\phi$ be a Drinfeld module as in Theorem \ref{T:Main theorem intro Drinfeld case}. For any $1\leq \ell \leq r-1$, we set $F_{\tau^{\ell}}:\mathbb{C}_{\infty}\to \mathbb{C}_{\infty}$ to be the unique entire function satisfying
\[
F_{\tau^{\ell}}(\theta z)-\theta F_{\tau^{\ell}}(z)=\exp_{\phi}(z)^{q^{\ell}}
\] 
for all $z\in \mathbb{C}_{\infty}$. Let $k$ be a positive integer.  Furthermore, for any $1\leq i \leq rk+1$, by a slight abuse of notation, we let $p_{i}:\mathbb{C}_{\infty}[[\tau]]^{rk+1}\to \mathbb{C}_{\infty}$ and $p_{i}:\mathbb{C}_{\infty}^{rk+1}\to \mathbb{C}_{\infty}$ be the projection onto the $i$-th coordinate of elements in $ \mathbb{C}_{\infty}[[\tau]]^{rk+1} $ and $ \mathbb{C}_{\infty}^{rk+1} $ respectively.

In what follows, similar to the Drinfeld module case, letting $\mathbb{M}_{\text{tens}}$ be a certain subring of $\mathbb{C}_{\infty}[[\tau]]^{rk+1}$, we introduce a continuous and injective map $\varphi_{\text{tens}}:\mathbb{M}_{\text{tens}}\to \Mat_{1\times r}(\mathbb{T})$ in \S\ref{SS:varphi map2}  which extends the isomorphism between the Anderson $t$-motive $M_{\phi\otimes C^{\otimes k}}$ and $\Mat_{1\times r}(\mathbb{C}_{\infty}[t])$, providing a link between the $\tau$-structure and the $t$-structure on the extension of $ M_{\phi\otimes C^{\otimes k}} $.

We set $\mathcal{M}_{\text{tens}}:=\varphi_{\text{tens}}^{-1}$ and for $\zz\in \mathbb{C}_{\infty}^{rk+1}$,  let $\mathcal{M}_{\text{tens},\zz}:=\delta_{1,\zz}^{M_{\phi\otimes C^{\otimes k}}}\circ \varphi_{\text{tens}}^{-1}$. Recall also the fundamental periods $\lambda_1,\dots,\lambda_r$ of $\phi$ and the row vector $\opi=(\lambda_1,\dots,\lambda_r)$. Our next result (restated as Theorem \ref{T:last} later) can be described as follows.

\begin{theorem}\label{T:2} We have 
	\begin{multline*}
p_{rk+1-(j-1)}(\Log_{\phi\otimes C^{\otimes k}})=\\ \begin{cases}
\mathcal{M}_{\text{tens}}\left(\frac{\tilde{\pi}^k}{\omega_C^k(\theta-t)}\opi(\Psi^{\tr})^{(-1)}\right) & \text{ if } j=1\\
\mathcal{M}_{\text{tens}}\left(\frac{\tilde{\pi}^k}{\omega_C^k(t-\theta)}(F_{\tau^{r-(j-1)}}(\lambda_1),\dots, F_{\tau^{r-(j-1)}}(\lambda_r))(\Psi^{\tr})^{(-1)}\right) & \text{ if } 2\leq j \leq r.
\end{cases}
\end{multline*}
Let $\zz\in \mathbb{C}_{\infty}^{rk+1}$ be an element in the domain of convergence of $\Log_{\phi\otimes C^{\otimes k}}$. Then
	\begin{multline*}
p_{rk+1-(j-1)}(\Log_{\phi\otimes C^{\otimes k}}(\zz))=\\ \begin{cases}
\mathcal{M}_{\text{tens},\zz}\left(\frac{\tilde{\pi}^k}{\omega_C^k(\theta-t)}\opi(\Psi^{\tr})^{(-1)}\right) & \text{ if } j=1\\
\mathcal{M}_{\text{tens},\zz}\left(\frac{\tilde{\pi}^k}{\omega_C^k(t-\theta)}(F_{\tau^{r-(j-1)}}(\lambda_1),\dots, F_{\tau^{r-(j-1)}}(\lambda_r))(\Psi^{\tr})^{(-1)}\right) & \text{ if } 2\leq j \leq r.
\end{cases}
\end{multline*}
\end{theorem}

In our last result, we analyze the special values of Goss $L$-functions of Drinfeld modules defined over $\mathbb{F}_q$. Let $\phi$ be a Drinfeld module of rank 2 given as in \eqref{E:drinfeld} such that $k_1,k_2\in \mathbb{F}_q$. Let us also consider the Drinfeld module $\tilde{\phi}$ given by
\[
\tilde{\phi}_{\theta}:=\theta-k_1k_2^{-1}\tau+k_2^{-1}\tau^2.
\] 
There exists a particular relation between certain coordinates of logarithms of Anderson $t$-module $\tilde{\phi}\otimes C^{\otimes k}$ and $L(\phi, k+1)$ (see \S\ref{S:proofoflastresult} for details). Using this relation allows us to obtain the following corollary of Theorem \ref{T:2}. By a slight abuse of notation, we continue to denote the map $\delta_{1,\zz}^{M_{\tilde{\phi}\otimes C^{\otimes k}}}\circ \varphi_{\text{tens}}^{-1}$ by $ \mathcal{M}_{\text{tens},\zz} $.
\begin{corollary}\label{C:corspecvaluestens} Let $\zz_i\in \Mat_{(2k+1)\times 1}(\mathbb{F}_q)$ be the $i$-th unit vector. Furthermore, we set $\mathcal{L}_1:=\frac{\tilde{\pi}^k}{\omega_C^k(t-\theta)}(\tilde{F}_{\tau}(\lambda_1),\tilde{F}_{\tau}(\lambda_2))(\Psi_{\tilde{\phi}}^{\tr})^{(-1)}\in \Mat_{1\times 2}(\mathbb{T})$ and $\mathcal{L}_2:=\frac{\tilde{\pi}^k}{\omega_C^k(t-\theta)}(-\lambda_1, -\lambda_2)(\Psi_{\tilde{\phi}}^{\tr})^{(-1)}\in \Mat_{1\times 2}(\mathbb{T})$.
We have 
	\[
	L(\phi,k+1)=
	\det\begin{bmatrix}
	\mathcal{M}_{\text{tens},\zz_{2k}}(\mathcal{L}_1)&\mathcal{M}_{\text{tens},\zz_{2k+1}}(\mathcal{L}_1)\\
\mathcal{M}_{\text{tens},\zz_{2k}}(\mathcal{L}_2)& \mathcal{M}_{\text{tens},\zz_{2k+1}}(\mathcal{L}_2)
	\end{bmatrix}
\]
	where $\Psi_{\tilde{\phi}}$ is the matrix defined as in \eqref{E:Psidef} with respect to $\tilde{\phi}$ and  $\tilde{F}:\mathbb{C}_{\infty}\to \mathbb{C}_{\infty}$ is the unique entire function satisfying
	\[
	\tilde{F}_{\tau}(\theta z)-\theta \tilde{F}_{\tau}(z)=\exp_{\tilde{\phi}}(z)^{q}
	\] 
	for all $z\in \mathbb{C}_{\infty}$.
\end{corollary}
\begin{remark} As it is explicitly discussed in \cite[\S3.2.1]{GN21}, the Goss $L$-functions of Drinfeld modules of rank $r\geq 2$ are related to Taelman $L$-values of $(r-1)$-st exterior powers of Drinfeld modules as well as their tensor product with Carlitz tensor powers. Although, when $r=2$, $(r-1)$-st exterior power of a Drinfeld module is again a Drinfeld module, when $r>2$, it gives rise to a higher dimensional $t$-module. The extensive study of such $t$-modules (see \cite{GN21,GN24,Ham93}) would allow one to have a similar setup that we have in \S\ref{S:Log for D otimes C}. However, the main obstacle we encounter is the lack of  a technique to explicitly calculate a certain quantity $\eta_n$ (see \S\ref{S:theelementeta}), which is needed to provide an explicit formula as in Theorem \ref{T:last}, for the case of $(r-1)$-st exterior power of Drinfeld modules as well as their tensor product with Carlitz tensor powers. Once we overcome this problem, combining our techniques in the present paper with an extension of the methods used in \cite{Gez21} for Drinfeld modules of rank $r>2$ to calculate special values would lead a generalization of Theorem \ref{T:last} for arbitrary rank Drinfeld modules. We hope to come back to this problem in the near future.
\end{remark}

\subsection{Generality of the Arguments}
In this short subsection we address the arguments and techniques used in the proofs of the main results of the paper and discuss which ones apply specifically to the cases of Drinfeld modules and their tensor powers, which apply in general and which are readily generalizable. 

The foundation of this paper is Theorem \ref{T:log} and the related Proposition \ref{P:deltazeromap}, which comes from \cite{Gre22} and is proven there in great generality: for any abelian (equivalently, $A$-finite) Anderson $A$-module. This includes the case for coefficient rings $A$ of smooth, geometrically connected, projective curves, rather than just $\F_q[t]$, as is considered here. In that sense, the main ideas of this paper should apply in great generality.

However, in order to even state a theorem like Theorem \ref{T:logdrinfeld} for such a general case (let alone prove it), one must work out substantial convergence details, such as showing that the quantity $1/(t-\theta)\opi(\Psi^{\tr})^{(-1)}$ converges in some reasonable Tate algebra and is able to be evaluated under the $\delta_{1,\bz}^{M_G}$ map once it is transferred to an element in $\Mat_{1\times d}(\mathbb{C}_{\infty})[[\tau]]$ using the interaction between the $t-$ and $\tau-$ structure on the Anderson $t$-motive $M_G$. In order to do that in the cases we consider, we take advantage of an explicit description of $\Psi$ as given in \cite[Thm. A]{KP23} for Drinfeld modules (and which is easily extended to tensor product with Carlitz tensor powers). To apply these arguments to more general cases, one would need to first work out a theory equivalent to \cite{KP23} for a larger class of $t$-modules, such as almost strictly pure $t$-modules in the sense of Namoijam and Papanikolas \cite[\S4.5]{NP21}. While this is a future project planned by the authors, it would go substantially beyond the scope of this paper.

Recall the map $\varphi:\mathbb{M}\to \Mat_{1\times r}(\mathbb{T})$ which extends the isomorphism between $M_{\phi}$ and $\Mat_{1\times r}(\mathbb{T})$ for certain Drinfeld modules. Again, the definition of the extension and of the norm we impose on $\mathbb M$ can be generalized easily to arbitrary Drinfeld modules, but the estimates of Lemma \ref{L:t degree of phi} and subsequent arguments showing the injectivity of $\varphi$ in the proof of Proposition \ref{P:inj} rely heavily on the particular structure of the Drinfeld module. It is likely one could construct arguments for the general case that mimic those we give, but again, since we do not need it for the paper, we do not pursue it here.

\subsection{Outline of the paper} In \S \ref{S: Prelim and Background}, we introduce Anderson $t$-modules, Anderson $t$-motives, dual $t$-motives and the formulas obtained by the second author in \cite{Gre22} for the logarithms of Anderson $t$-modules. In \S \ref{S:Log of Drinfeld Modules}, after discussing the tensor construction for Drinfeld modules by using our results in \S \ref{SS:Tensor for Drinfeld modules}, we provide a proof for Theorem \ref{T:Main theorem intro Drinfeld case} and Corollary \ref{C:Main Corollary Drinfeld modules Intro}. Finally, in \S \ref{S:Log for D otimes C}, we discuss the structure of a certain motivic map (see \S \ref{SS:delta_0 map D otimes C}) and then, using our ideas established in \S \ref{SS:Tensor for Drinfeld modules}, we prove Theorem \ref{T:2}.

\subsection*{Acknowledgments} The authors would like to express their gratitude to Gebhard B\"{o}ckle, Chieh-Yu Chang, Matt Papanikolas, Federico Pellarin and Wei-Lun Tsai for fruitful discussions and useful suggestions. The authors are also grateful to the anonymous referee for careful reading of the manuscript and several valuable suggestions on the technical details of the present paper that clarify the conceptual structure of our results. The second author expresses grateful support for funding from the state of Louisiana Board of Regents and from the NSF. This material is based upon work supported by the National Science Foundation under Grant No. (2302399). The first author acknowledges support by Deutsche Forschungsgemeinschaft (DFG) through CRC-TR 326 `Geometry and Arithmetic of Uniformized Structures', project number 444845124. The first author was also supported by NSTC Grant 113-2115-M-007-001-MY3.

\section{Preliminaries and background}\label{S: Prelim and Background}
Our goal in this section is to review the notion of Anderson $t$-modules, Anderson $t$-motives and dual $t$-motives as well as a formula for the logarithms of Anderson $t$-modules derived in \cite{Gre22}. The main references for our exposition are \cite{And86}, \cite{BP20}, \cite{Gre22} and \cite[\S2.3--2.5]{HJ20}.

\subsection{Anderson $t$-modules} For any matrix $\mathcal{C}=(m_{\mu \nu})\in \Mat_{d_1\times d_2}(\mathbb{C}_{\infty})$ and $i\in \mathbb{Z}$, we define \textit{the $i$-th twist of $\mathcal{C}$} by $\mathcal{C}^{(i)}:=(m_{\mu \nu}^{q^i})$. Furthermore, we let 
\[
\Mat_{d_1\times d_2}(\mathbb{C}_{\infty})[[\tau]]:=\left\{\sum_{i\geq 0}\mathcal{C}_i\tau^i \ \ | \ \ \mathcal{C}_i\in \Mat_{d_1\times d_2}(\mathbb{C}_{\infty})\right\}
\]
and when $d=d_1=d_2$, we define the non-commutative power series ring $\Mat_{d}(\mathbb{C}_{\infty})[[\tau]]$ subject to the condition 
\[
\tau \mathcal{C}=\mathcal{C}^{(1)}\tau.
\]
We also let $\Mat_{d}(\mathbb{C}_{\infty})[\tau]$ be the subring of $\Mat_{d}(\mathbb{C}_{\infty})[[\tau]]$ consisting of polynomials in $\tau$.

\begin{definition} \label{D:t-modules}
	\begin{itemize}
		\item[(i)] \textit{An Anderson $t$-module $G$ of dimension $d\geq 1$}  is a tuple $(\mathbb{G}_{a/\mathbb{C}_{\infty}}^d,\phi)$ consisting of the $d$-dimensional additive algebraic group $\mathbb{G}^d_{a/\mathbb{C}_{\infty}}$ defined over $\mathbb{C}_{\infty}$ and an $\mathbb{F}_q$-algebra homomorphism $\phi:A\to \Mat_{d}(\mathbb{C}_{\infty})[\tau]$ given by 
		\begin{equation}\label{E:tmodule}
		\phi_{\theta}:=d[\theta]+A_1\tau+\dots+A_{\ell}\tau^{\ell}
		\end{equation}
		so that $\ell\in \mathbb{Z}_{\geq 1}$ and 
		$
		d[\theta]:=\theta\Id_{d}+\mathfrak{N}
		$
		for some nilpotent matrix $\mathfrak{N}$.
		\item[(ii)] The morphisms between Anderson $t$-modules $G_1=(\mathbb{G}^{d_1}_{a/\mathbb{C}_{\infty}},\phi)$ and $G_2=(\mathbb{G}^{d_2}_{a/\mathbb{C}_{\infty}},\psi)$ are defined to be the morphisms $g:\mathbb{G}^{d_1}_{a/\mathbb{C}_{\infty}}\to \mathbb{G}^{d_2}_{a/\mathbb{C}_{\infty}}$ of algebraic groups satisfying $g \phi_{\theta}=\psi_{\theta}g$.
	\end{itemize}
\end{definition}

We define  $G(\mathbb{C}_{\infty}):=\Mat_{d\times 1}(\mathbb{C}_{\infty})$ equipped with the $A$-module structure given by 
\[
\theta \cdot \zz=\phi_{\theta}(z):=d[\theta]\zz+A_1\zz^{(1)}+\dots+A_{\ell}\zz^{(\ell)}, \ \ \zz\in \Mat_{d\times 1}(\mathbb{C}_{\infty}).
\]

We also consider $\Lie(G)(\mathbb{C}_{\infty}):=\Mat_{d\times 1}(\mathbb{C}_{\infty})$ which is equipped with the $A$-module action defined by 
\[
\theta \cdot \zz:=d[\theta]\zz.
\]

It is known, due to Anderson \cite[\S2]{And86}, that there exists a unique infinite series $\Exp_{G}:=\sum_{i\geq 0}Q_i\tau^i \in \Mat_{d}(\mathbb{C}_{\infty})[[\tau]]$ satisfying $Q_0=\Id_d$ and 
\[
\Exp_{G}d[\theta]=\phi_{\theta}\Exp_{G}.
\]
Moreover, it induces an entire function $\Exp_{G}:\Lie(G)(\mathbb{C}_{\infty})\to G(\mathbb{C}_{\infty})$ given by 
\begin{equation}\label{E:Exp def}
\Exp_{G}(\zz):=\sum_{i=0}^{\infty}Q_i\zz^{(i)}.
\end{equation}

We let $\Log_{G}:=\sum_{i\geq 0} P_i\tau^i \in \Mat_{d}(\mathbb{C}_{\infty})[[\tau]]$ be the formal inverse of $\Exp_{G}\in \Mat_{d}(\mathbb{C}_{\infty})[[\tau]]$. On a certain subset $\mathcal{D}_{G}$ of $G(\mathbb{C}_{\infty})$, $\Log_{G}$ induces a vector valued function $\Log_{G}:\mathcal{D}_{G}\to \Lie(G)(\mathbb{C}_{\infty})$ defined by 
\[
\Log_{G}(\zz):=\sum_{i=0}^{\infty}P_i\zz^{(i)}.
\]
For further details on the exponential and the logarithm function, we refer the reader to \cite[\S2.5.1]{HJ20}.

In what follows, we provide some examples of Anderson $t$-modules.
\begin{example}\label{Ex:1}
	\begin{itemize}
		\item[(i)] Any Drinfeld module $\phi$ is an Anderson $t$-module $(\mathbb{G}_{a/\mathbb{C}_\infty},\phi)$ of dimension one. 
		\item[(ii)] Let $C:A\to \mathbb{C}_{\infty}[\tau]$ be the Carlitz module and  $k\in \mathbb{Z}_{\geq 1}$. We consider \textit{the $k$-th tensor power of the Carlitz module} $C^{\otimes k}:=(\mathbb{G}_{a/\mathbb{C}_{\infty}}^k, \psi)$ where $\psi:A\to \Mat_{k}(\mathbb{C}_{\infty})[\tau]$ is given by (see \cite{AT90})
		\[
		\psi_{\theta}:=\begin{pmatrix}
		\theta&1& & \\
		& \ddots&\ddots & \\
		& & \ddots & 1\\
		& & & \theta  
		\end{pmatrix}+\begin{pmatrix}
		0&\dots&\dots &0 \\
		\vdots& & &\vdots \\
		0& &  & \vdots\\
		1&0&\dots& 0 
		\end{pmatrix}\tau.
		\]
		\item[(iii)] Let $\phi$ be a Drinfeld module of rank $r$ given as in \eqref{E:drinfeld}. We define \textit{the tensor product $\phi$ and the $k$-th tensor power of the Carlitz module} as $\phi\otimes C^{\otimes k}:=(\mathbb{G}_{a/\mathbb{C}_{\infty}}^{rk+1},\rho)$ where 
		\[
		\rho:A\to \Mat_{rk+1}(\mathbb{C}_{\infty})[\tau]
		\] is given by
		\[
		\rho_{\theta}:=	\begin{pmatrix}
		\theta&\cdots &0&\bovermat{$rk+1-r$}{1&0& \cdots & 0}  \\
		& \ddots& & \ddots& \ddots & &\vdots \\
		& &\ddots& &\ddots & \ddots&0\\
		& &  & \theta&\cdots&0 & 1\\
		& &  & & \theta&\cdots& 0\\
		& &  & & &\ddots& \vdots\\
		& &  & & & & \theta\\
		\end{pmatrix}+\begin{pmatrix}
		0&\cdots&\cdots & \cdots &\cdots & \cdots & 0\\
		\vdots& & & & & &\vdots\\
		0& & & & & & 0\\
		1&0 &\cdots &\cdots &\cdots &\cdots &0\\
		&\ddots& \ddots&  & & &\vdots\\
		& &1& \ddots & & & \vdots\\
		k_1&\cdots&\cdots &k_r&0&\cdots& 0
		\end{pmatrix}\tau.
		\]
	\end{itemize}
	For more details on the tensor product of Drinfeld modules of arbitrary rank with the tensor powers of the Carlitz module, we refer the reader to  \cite{Gez21, GN21, Ham93, Hua23, Kha21}.
\end{example}

\begin{remark}\label{R:roclog} Let $\phi$ be a Drinfeld module of rank $r$ as in \eqref{E:drinfeld}. We let $\mathcal{N}_{\phi}$ be a subset of $\{1,\dots,r\}$ containing indices $i$ such that $k_i\neq 0$. For any $i\in \mathcal{N}_{\phi}$, we further let $\nu_i:=\frac{\log_q|k_i|-q^i}{q^i-1}$. Then, in \cite[Cor. 4.5]{KP23}, Khaochim and Papanikolas showed that any $z\in \mathbb{C}_{\infty}$ satisfying $|z|<q^{-\nu_m}$ where $m$ is the smallest element in $\mathcal{N}_{\phi}$ such that  $\nu_m\geq \nu_i$ for any $i\in \mathcal{N}_{\phi}$ lies in $\mathcal{D}_{\phi}  $. For later use, we also emphasize that for our choice of Drinfeld module $\phi$ as in Theorem \ref{T:Main theorem intro Drinfeld case}, we see that $\log_{\phi}$ converges at any element $z\in \mathbb{C}_{\infty}$ satisfying $|z|<q^{q^r/(q^r-1)}$. For the tensor powers of the Carlitz module, Anderson and Thakur \cite[Prop. 2.4.3]{AT90} provided a condition for the elements that lie in $\mathcal{D}_{C^{\otimes k}}$. Although, the set $\mathcal{D}_{\phi\otimes C^{\otimes k }}$ is not explicitly studied in the general case, in Proposition \ref{P:02}(ii), we will provide some analysis on the norm of elements lying in $\mathcal{D}_{\phi\otimes C^{\otimes k }}$ under a particular condition on the coefficients of $\phi$.    
\end{remark}

Consider $\Lambda_G:=\Ker(\Exp_{G})\subset \Lie(G)(\mathbb{C}_{\infty})$. By the work of Anderson \cite[Lem. 2.4.1]{And86}, we know that, under a certain condition on $G$,  $\Lambda_G$ forms a finitely generated and discrete $A$-module. We call any non-zero element of $\Lambda_G$ \textit{a period of $G$}. %Furthermore, if the set $\{\lambda_1,\dots,\lambda_s\}$ forms an $A$-basis for $\Lambda_G$, we call each $\lambda_i$ \textit{a fundamental period of $G$}.
Indeed, by \cite[Thm.4]{And86}, when $G$ is the Anderson $t$-module either in Example \ref{Ex:1}(i) or in \ref{Ex:1}(iii), $\Lambda_G$ is  free of rank $r$ as an $A$-module. Moreover, if $G$ is the $k$-th tensor power of the Carlitz module, then $\Lambda_G$ is free of rank one. 
\subsection{Anderson generating functions}\label{SS:Anderson Generating Functions} 
For any $c\in \mathbb{C}_{\infty}^{\times}$, we define the Tate algebra
\[
\mathbb{T}_{c}:=\left\{ g=\sum_{i\geq 0}a_it^i\in \mathbb{C}_{\infty}[[t]] \ \ | \ \ \inorm{c^ia_i}\to 0 \ \ \text{ as } i\to \infty \right\}.
\]
It is equipped with the multiplicative norm $\lVert \cdot\rVert_c$ given by 
\[
\lVert g\rVert_c:=\max\{|c^i||a_i| \ \ | \ \ i\geq 0\}.
\]
To ease the notation, we denote $\mathbb{T}_{1}$ by $\mathbb{T}$ and $\lVert \cdot \rVert_1$ by $\lVert \cdot \rVert$. By using a slight abuse of notation, we further extend the norm $\lVert \cdot \rVert$ on $\Mat_{m\times \ell}(\mathbb{T})$ so that for any $B=(b_{ij})\in \Mat_{m\times \ell}(\mathbb{T})$, 
\[
\lVert B\rVert:=\max_{i,j} \lVert b_{ij}\rVert.
\]

Let $\phi$ be a Drinfeld module of rank $r$ given as in \eqref{E:drinfeld}. In what follows, we define a certain element in $\mathbb{T}$ which will be later useful to describe a particular property of Anderson $t$-motives of Drinfeld modules. For any $z\in \mathbb{C}_{\infty}$, \textit{the Anderson generating function $s_{\phi}(z;t)$} is given by 
\[
s_{\phi}(z;t):=\sum_{i= 0}^{\infty}\exp_{\phi}\left(\frac{z}{\theta^{i+1}}\right)t^i \in \mathbb{T}.
\]
Let $t$ be a variable over $\mathbb{C}_{\infty}$. For any $f=\sum_{i\geq 0}a_it^i\in \mathbb{C}_{\infty}[[t]]$ and $j\in \mathbb{Z}$, we set $f^{(j)}:=\sum_{i\geq 0}a_i^{q^j}t^i\in \mathbb{C}_{\infty}[[t]]$. We now state a fundamental property of Anderson generating functions due to Pellarin.
\begin{proposition}[{Pellarin,\cite[\S4.2]{Pel08}}]  \label{P:AGF} Let $\lambda\in \Ker(\exp_{\phi})$. Then 
	\[
	(t-\theta)s_{\phi}(\lambda;t)=k_1s_{\phi}(\lambda;t)^{(1)}+\dots+k_rs_{\phi}(\lambda;t)^{(r)}.
	\]
\end{proposition}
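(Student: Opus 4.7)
The plan is a direct computation using the functional equation
\[\exp_\phi(\theta z) = \phi_\theta(\exp_\phi(z)) = \theta \exp_\phi(z) + k_1 \exp_\phi(z)^q + \cdots + k_r \exp_\phi(z)^{q^r}\]
together with the vanishing $\exp_\phi(\lambda) = 0$. I will compute $\theta \cdot s_\phi(\lambda;t)$ and $t \cdot s_\phi(\lambda;t)$ termwise and then rearrange.

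First I would solve the functional equation above for $\theta\exp_\phi(z)$ and substitute $z = \lambda/\theta^{i+1}$, giving
\[\theta\exp_\phi\!\left(\frac{\lambda}{\theta^{i+1}}\right) = \exp_\phi\!\left(\frac{\lambda}{\theta^{i}}\right) - \sum_{j=1}^{r} k_j \exp_\phi\!\left(\frac{\lambda}{\theta^{i+1}}\right)^{q^j}.\]
Multiplying by $t^i$ and summing over $i \geq 0$ yields
\[\theta\, s_\phi(\lambda;t) = \sum_{i=0}^{\infty} \exp_\phi\!\left(\frac{\lambda}{\theta^{i}}\right) t^i - \sum_{j=1}^{r} k_j \sum_{i=0}^{\infty}\exp_\phi\!\left(\frac{\lambda}{\theta^{i+1}}\right)^{q^j} t^i.\]

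Next I would identify the two pieces on the right-hand side. The second sum is exactly $\sum_{j=1}^{r} k_j s_\phi(\lambda;t)^{(j)}$ because the Frobenius twist $\cdot^{(j)}$ acts only on coefficients. For the first sum, the $i=0$ term equals $\exp_\phi(\lambda) = 0$ since $\lambda \in \ker(\exp_\phi)$; shifting the index $i \mapsto i+1$ then gives precisely $t\cdot s_\phi(\lambda;t)$. Rearranging yields
\[(t-\theta)\, s_\phi(\lambda;t) = k_1\, s_\phi(\lambda;t)^{(1)} + \cdots + k_r\, s_\phi(\lambda;t)^{(r)},\]
which is the claim.

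There is no substantial obstacle: the only subtlety is ensuring that the termwise manipulations are justified in the Tate algebra $\mathbb{T}$, which follows from the fact that $|\exp_\phi(\lambda/\theta^{i+1})| \to 0$ as $i \to \infty$ (so $s_\phi(\lambda;t) \in \mathbb{T}$) together with the compatibility of Frobenius twisting with convergent series. The key conceptual point is simply that $\exp_\phi(\lambda) = 0$ is what permits the index shift that turns $\sum_{i\geq 0}\exp_\phi(\lambda/\theta^i)t^i$ into $t\cdot s_\phi(\lambda;t)$.
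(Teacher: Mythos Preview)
Your proof is correct; this is the standard direct computation, and the paper itself does not supply a proof but simply attributes the result to Pellarin. There is nothing to compare against in the paper beyond the citation.
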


\subsection{Anderson $t$-motives}\label{SS:And t-motives} We define the non-commutative ring $\mathbb{C}_{\infty}[t,\tau]:=\mathbb{C}_{\infty}[t][\tau]$ with respect to the condition
$
\tau f=f^{(1)}\tau$ where $f=\sum_{i\geq 0}a_it^i\in \mathbb{C}_{\infty}[t]$.

\begin{definition}
	\begin{itemize}
		\item[(i)] \textit{An Anderson $t$-motive $M$} is a left $\mathbb{C}_{\infty}[t,\tau]$-module which is free and finitely generated over $\mathbb{C}_{\infty}[t]$ and $\mathbb{C}_{\infty}[\tau]$ (possibly of different ranks) such that there exists a non-negative integer $\mu$ satisfying
		\[
		(t-\theta)^{\mu}M\subset \tau M.
		\] 
		\item[(ii)] Morphisms of Anderson $t$-motives are given by morphisms of left $\mathbb{C}_{\infty}[t,\tau]$-modules.
		\item[(iii)] Let $M_1$ and $M_2$ be two Anderson $t$-modules. \textit{The tensor product of $M_1$ and $M_2$} is the Anderson $t$-motive $M_1\otimes_{\mathbb{C}_{\infty}[t]}M_2$ where $\tau$ acts diagonally.
	\end{itemize}
\end{definition}

Let $\bm\in \Mat_{d\times 1}(M)$ be a $\mathbb{C}_{\infty}[t]$-basis for $M$ and $\mathfrak{Q}\in  \Mat_r(\mathbb{C}_{\infty}[t])$ be such that 
\[
\tau \cdot \bm=\mathfrak{Q}\bm.
\] 
We call $M$ \textit{rigid analytically trivial} if there exists $\Upsilon\in \GL_r(\mathbb{T})$ such that 
\[
\Upsilon^{(1)}=\mathfrak{Q}\Upsilon.
\]
We also call $\Upsilon$ \textit{a rigid analytic trivialization of $M$}.

Anderson \cite[Thm. 1]{And86} constructs a functor which attaches to each Anderson $t$-motive an Anderson $t$-module, which are in literature called \textit{abelian $t$-modules}. This moreover gives an anti-equivalence of categories of abelian $t$-modules and Anderson $t$-motives. We briefly describe this functor now. Given an Anderson $t$-module $G=(\mathbb{G}^d_{a/\mathbb{C}_{\infty}},\phi)$, there exists a unique Anderson $t$-motive $M_G$ given by the group of morphisms $\mathbb{G}^d_{a/\mathbb{C}_{\infty}}\to \mathbb{G}_{a/\mathbb{C}_{\infty}}$ of $\mathbb{C}_{\infty}$-algebraic groups. This group of morphisms is naturally a $\mathbb{C}_{\infty}[\tau]$-module and is isomorphic to $\Mat_{1\times d}(\mathbb{C}_{\infty})[\tau]$ as $\mathbb{C}_{\infty}[\tau]$-modules. It is equipped with a $\mathbb{C}_{\infty}[t,\tau]$-module structure given by 
\[
ct^i\cdot m:=c\circ m \circ \phi_{\theta^i}, \ \ m\in M_G.
\]

In what follows, we describe the Anderson $t$-motives corresponding to the Anderson $t$-modules given in Example \ref{Ex:1}.

\subsubsection{Anderson $t$-motive of Drinfeld modules}\label{SS:t-motive for Drinfeld} Let $\phi$ be the Drinfeld module of rank $r$ given as in \eqref{E:drinfeld}. We define $M_{\phi}:=\mathbb{C}_{\infty}[\tau]$ and equip it with the $\mathbb{C}_{\infty}[t]$-module structure given by 
\[
ct^i\cdot g\tau^j:=cg\tau^j\phi_{\theta^i},  \ \ c,g\in \mathbb{C}_{\infty}.
\]
One can see that $M_{\phi}$ forms a left $\mathbb{C}_{\infty}[t,\tau]$-module, satisfying $(t-\theta)M_{\phi}\subset \tau M_{\phi}$, which is free and finitely generated over $\mathbb{C}_{\infty}[t]$ and $\mathbb{C}_{\infty}[\tau]$.  We define the matrix
\[
\Theta:=\begin{pmatrix}
&1& & &  \\
& & \ddots & & \\
& & & \ddots & \\
& & & &  1\\
\frac{t-\theta}{k_r}&-\frac{k_1}{k_r} & \dots & \dots & -\frac{k_{r-1}}{k_r}
\end{pmatrix}\in \GL_r(\mathbb{T})\cap \Mat_r(\mathbb{C}_{\infty}[t]).
\]
We choose $m:=[m_1,\dots,m_r]^{\tr}\in \Mat_{r\times 1}(M_{\phi})$ to be a $\mathbb{C}_{\infty}[t]$-basis for $M_{\phi}$ so that 
\[
\tau \cdot m=\Theta m.
\]
Observe that $\{m_1\}$ forms a $\mathbb{C}_{\infty}[\tau]$-basis for $M_{\phi}$. 

Let $\{\lambda_1,\dots,\lambda_r\}$ be an $A$-basis for the period lattice $\Lambda_{\phi}$. For any $i\in \{1,\dots,r\}$, we define the Anderson generating function $f_i:=s_{\phi}(\lambda_i;t)$. Consider the matrix 
\begin{equation}\label{E:Upsilon}
\Upsilon:=\begin{pmatrix} f_1 & \cdots & \cdots & f_r\\
f_1^{(1)} & \cdots & \cdots & f_r^{(1)}\\
\vdots  & & & \vdots\\
f_1^{(r-1)} & \cdots & \cdots &f_r^{(r-1)}
\end{pmatrix}\in \Mat_{r\times r}(\mathbb{T}).
\end{equation}
By \cite[\S4.2]{Pel08}, we know that $\Upsilon\in \GL_r(\mathbb{T})$ and moreover it satisfies 
\[
\Upsilon^{(1)}=\Theta \Upsilon.
\]
Hence $M_{\phi}$ is rigid analytically trivial.

For later use, we also consider another $\mathbb{C}_{\infty}[t]$-basis 
\begin{multline*}
\mathfrak{c}^{\phi}:=[\mathfrak{c}^{\phi}_1,\dots, \mathfrak{c}^{\phi}_r]^{\tr}:=[k_1^{(-1)}m_1+k_2^{(-1)}m_2+\dots+k_r^{(-1)}m_r,k_2^{(-2)}m_1+k_3^{(-2)}m_2+\dots +k_r^{(-2)}m_{r-1}
\\,\dots,k_{r-1}^{(1-r)}m_1+k_r^{(1-r)}m_2, k_r^{(-r)}m_1]^{\tr}\in \Mat_{r\times 1}(M_{\phi})
\end{multline*}
and note that 
\begin{equation}\label{E:tauaction}
\tau \cdot \mathfrak{c}^{\phi}=\Phi^{\tr} \mathfrak{c}^{\phi}
\end{equation}
where 
\begin{equation}\label{E:Phi def Drinfeld modules}
\Phi:=\begin{pmatrix}
&1& & &  \\
& & \ddots & & \\
& & & \ddots & \\
& & & &  1\\
\frac{t-\theta}{k_r^{(-r)}}&-\frac{k_1^{(-1)}}{k_r^{(-r)}} & \dots & \dots & -\frac{k_{r-1}^{(-(r-1))}}{k_r^{(-r)}}
\end{pmatrix}\in  \GL_r(\mathbb{T})\cap \Mat_r(\mathbb{C}_{\infty}[t]).
\end{equation}

\subsubsection{Anderson $t$-motive of the tensor powers of the Carlitz module}\label{SS:t-motive for powers of Carlitz} Let $k\in \mathbb{Z}_{\geq 1}$. We consider the left $\mathbb{C}_{\infty}[t,\tau]$-module 
\[
M_{C^{\otimes k}}:=M_{C}\otimes_{\mathbb{C}_{\infty}[t]} \cdots \otimes_{\mathbb{C}_{\infty}[t]} M_{C}=\mathbb{C}_{\infty}[\tau]\otimes_{\mathbb{C}_{\infty}[t]}\cdots \otimes_{\mathbb{C}_{\infty}[t]} \mathbb{C}_{\infty}[\tau]
\]
so that $\tau$ acts diagonally. Let $m_1$ be a basis for $M_C$ as a $\C_\infty[t]$-module. Then $m:=m_1\otimes \cdots \otimes m_1$ is a $\mathbb{C}_{\infty}[t]$-basis for $M_{C^{\otimes k}}$ so that 
\[
\tau m=(t-\theta)^k m.
\]
Moreover, the set $\{m, (t-\theta)m,\dots,(t-\theta)^{k-1}m\}$ forms a $\mathbb{C}_{\infty}[\tau]$-basis for $M_{C^{\otimes k}}$ and hence it is of dimension $k$ over $\mathbb{C}_{\infty}[\tau]$. In particular, $M_{C^{\otimes k}}\cong \Mat_{1\times k}(\mathbb{C}_{\infty})[\tau]$ as $\mathbb{C}_{\infty}[\tau]$-modules. 

We now fix a $(q-1)$-st root of $-\theta$ and define \textit{the Anderson-Thakur element} $\omega_C$ by 
\begin{equation}\label{D:omega_C}
\omega_C:=(-\theta)^{1/(q-1)}\prod_{j=0}^{\infty}\left(1-\frac{t}{\theta^{q^j}}\right)^{-1}\in \mathbb{T}.
\end{equation}

One can observe that $(\omega_C^k)^{(1)}=(t-\theta)^k\omega_C^k$ and hence $M_{C^{\otimes k}}$ is rigid analytically trivial.

\subsubsection{Anderson $t$-motive of the tensor product of Drinfeld modules with the tensor powers of the Carlitz module}\label{SS:t-motive for D tensor C} We consider the left  $\mathbb{C}_{\infty}[t,\tau]$-module 
\[
M_{\phi\otimes C^{\otimes k}}:=M_{\phi}\otimes_{\mathbb{C}_{\infty}[t]} M_{C^{\otimes k}}=\mathbb{C}_{\infty}[\tau]\otimes_{\mathbb{C}_{\infty}[t]}\left(\mathbb{C}_{\infty}[\tau]\otimes_{\mathbb{C}_{\infty}[t]}\cdots \otimes_{\mathbb{C}_{\infty}[t]} \mathbb{C}_{\infty}[\tau]\right)
\]
so that $\tau$ acts diagonally. Observe that $(t-\theta)^{k+1}M_{\phi\otimes C^{\otimes k}}\subset \tau M_{\phi\otimes C^{\otimes k}}$. Moreover, $M_{\phi\otimes C^{\otimes k}}$ is free and finitely generated over $\mathbb{C}_{\infty}[t]$ and $\mathbb{C}_{\infty}[\tau]$. We consider a $\mathbb{C}_{\infty}[t]$-basis $\mathfrak{m}$ for $M_{\phi\otimes C^{\otimes k}}$ given by $\mathfrak{m}:=[\mathfrak{m}_1,\dots,\mathfrak{m}_r]^{\tr}:=[m_1\otimes m,\dots,m_r\otimes m]^{\tr}$, where $m_i$ are the basis elements from \S \ref{SS:t-motive for Drinfeld} and $m$ is from \S \ref{SS:t-motive for powers of Carlitz}. Note that
\[ 
\tau \cdot \mathfrak{m}=(t-\theta)^k\Theta \mathfrak{m}.
\]
Let $\tilde{\Upsilon}:=\omega_C^k\Upsilon\in \GL_r(\mathbb{T})$. Then it is easy to see that 
$
\tilde{\Upsilon}^{(1)}=(t-\theta)^k\Theta\tilde{\Upsilon}
$
and hence $M_{\phi\otimes C^{\otimes k}}$ is rigid analytically trivial.

We further define another $\mathbb{C}_{\infty}[t]$-basis 
\[
\mathfrak{c}:=[\mathfrak{c}_1,\dots, \mathfrak{c}_r]^{\tr}:=[\mathfrak{c}^{\phi}_1\otimes m, \dots, \mathfrak{c}^{\phi}_r\otimes m]^{\tr}.
\]
Moreover, we note that 
\begin{equation}\label{E:tauactiontens}
\tau \cdot \mathfrak{c}=(t-\theta)^k\Phi^{\tr} \mathfrak{c}.
\end{equation}
Lastly, we define a $\mathbb{C}_{\infty}[\tau]$-basis 
\begin{multline*}
\mathfrak{g}:=[g_1,\dots,g_{rk+1}]^{\tr}:=[\mathfrak{m}_1, \mathfrak{m}_2,\dots, \mathfrak{m}_r,(t-\theta)\mathfrak{m}_1, (t-\theta)\mathfrak{m}_2,\dots, (t-\theta)\mathfrak{m}_r,\dots,\\
(t-\theta)^{k-1}\mathfrak{m}_1, (t-\theta)^{k-1}\mathfrak{m}_2,\dots, (t-\theta)^{k-1}\mathfrak{m}_r, (t-\theta)^{k}\mathfrak{m}_1 ]^{\tr}.
\end{multline*}
One now sees that 
\[
t\cdot \mathfrak{g}=\rho_{\theta}\mathfrak{g}
\]
where $\rho_{\theta}$ is given as in Example \ref{Ex:1}(iii).

\subsection{Dual $t$-motives}\label{SS:Dual t-motives} We define $\mathbb{C}_{\infty}[t,\sigma]:=\mathbb{C}_{\infty}[t][\sigma]$ to be the ring of polynomials of $\sigma$ with coefficients in $\mathbb{C}_{\infty}[t]$ subject to the condition
\[
\sigma f=f^{(-1)}\sigma, \ \ f\in \mathbb{C}_{\infty}[t].
\]
We further define the $*$-operation on elements in $\mathbb{C}_{\infty}[\tau]$ by 
\[
g^{*}:=\sum_{i\geq 0}c_i^{(-i)}\sigma^i, \ \ g=\sum_{i\geq 0}c_i\tau^i.
\]
We extend this operation to elements in $\Mat_{d}(\mathbb{C}_{\infty})[\tau]$ by defining $\mathcal{C}^{*}:=((m^{*}_{\mu \nu}))^{\tr}$ for any $\mathcal{C}=(m_{\mu \nu})\in \Mat_{d}(\mathbb{C}_{\infty})[\tau]$.

\begin{definition}
	\begin{itemize}
		\item[(i)] \textit{A dual $t$-motive $N$} is a left $\mathbb{C}_{\infty}[t,\sigma]$-module which is free and finitely generated over $\mathbb{C}_{\infty}[t]$ and $\mathbb{C}_{\infty}[\sigma]$ such that there exists $\ell\in \mathbb{Z}_{\geq 0}$ satisfying
		\[
		(t-\theta)^{\ell} N\subset \sigma N.
		\] 
		\item[(ii)] The morphisms of dual $t$-motives are given by left $\mathbb{C}_{\infty}[t,\sigma]$-module homomorphisms.
		\item[(iii)] The tensor product of dual $t$-motives $N_1$ and $N_2$ is defined to be the left $\mathbb{C}_{\infty}[t,\sigma]$-module $N_1\otimes N_2:=N_1\otimes_{\mathbb{C}_{\infty}[t]} N_2$ where $\sigma$ acts diagonally.
	\end{itemize}
\end{definition}

Let $\bn\in \Mat_{r\times 1}(N)$ be a $\mathbb{C}_{\infty}[t]$-basis for $N$ and $\mathfrak{Z}\in \Mat_r(\mathbb{C}_{\infty}[t])$ be such that 
\[
\sigma \cdot \bn=\mathfrak{Z}\bn.
\] 
We say that $M$ is \textit{rigid analytically trivial} if there exists $\Psi\in \GL_r(\mathbb{T})$ such that 
\[
\Psi^{(-1)}=\mathfrak{Z}\Psi.
\]
We further call $\Psi$ \textit{a rigid analytic trivialization of $N$}.

In his unpublished work (see also \cite[\S2.5]{HJ20}), Anderson constructs a functor which attaches to each dual $t$-motive an Anderson $t$-module, which are in literature called \textit{$A$-finite $t$-modules}. The aforementioned functor indeed describes an equivalence between the category of $A$-finite Anderson $t$-modules and the category of dual $t$-motives. In other words, for any $A$-finite Anderson $t$-module $G=(\mathbb{G}^d_{a/\mathbb{C}_{\infty}},\phi)$, there exists a unique Anderson dual $t$-motive $N_G:=\Mat_{1\times d}(\mathbb{C}_{\infty}[\sigma])$  equipped with a $\mathbb{C}_{\infty}[t,\tau]$-module structure given by 
\[
ct^i\cdot n:=cn \phi^{*}_{\theta^i}, \ \ n\in N_G.
\]

\begin{remark}
	We emphasize that since our Anderson $t$-motives and dual $t$-motives always correspond to abelian $t$-modules and $A$-finite $t$-modules respectively, the Anderson $t$-modules considered throughout this paper will be always one of those kind. Indeed, in \cite[Thm. A]{Mau21}, Maurischat showed that being an abelian $t$-module is equivalent to being an $A$-finite $t$-module. 
\end{remark}

In what follows, we describe the dual $t$-motives corresponding to Anderson $t$-modules given in Example \ref{Ex:1}.

\subsubsection{Dual $t$-motive of Drinfeld modules}\label{SS:Dual t-motives for Drinfeld} Let $\phi$ be a Drinfeld module given as in \eqref{E:drinfeld}. We define $N_{\phi}$ to be the  $\mathbb{C}_{\infty}[\sigma]$-module $\mathbb{C}_{\infty}[\sigma]$ equipped with the $\mathbb{C}_{\infty}[t]$-module action given by 
\[
ct^i\cdot g\sigma^j:=cg\sigma^j\phi_{\theta^i}^{*},  \ \  c,g\in \mathbb{C}_{\infty}.
\]
It is free and finitely generated over $\mathbb{C}_{\infty}[t]$ and $\mathbb{C}_{\infty}[\sigma]$ satisfying $(t-\theta)N_{\phi}\subset \sigma N_{\phi}$. We choose a $\mathbb{C}_{\infty}[t]$-basis $\mathfrak{d}^{\phi}:=[\mathfrak{d}^{\phi}_1,\dots,\mathfrak{d}^{\phi}_r]^{\tr}\in \Mat_{r\times 1}(N_{\phi})$ for $N_{\phi}$ satisfying 
\begin{equation}\label{E:tauact}
\sigma \cdot \mathfrak{d}^{\phi}=\Phi \mathfrak{d}^{\phi}.
\end{equation}
Moreover, $\{\mathfrak{d}^{\phi}_1\}$ forms a $\mathbb{C}_{\infty}[\sigma]$-basis for $N_{\phi}$. 

Following the notation in \cite[\S3.3]{CP12}, set 
\begin{equation}\label{E:V def}
V:=\begin{pmatrix}
k_1&k_2^{(-1)} &k_3^{(-2)}& \dots &k_r^{(1-r)}\\
\vdots &\vdots&\vdots&  \iddots & \\
\vdots & \vdots& k_r^{(-2)} & & \\
\vdots &k_r^{(-1)} & & & \\
k_r &  & & & 
\end{pmatrix}\in \GL_r(\mathbb{C}_{\infty})
\end{equation}
and consider the matrix $\Psi:=V^{-1}((\Upsilon^{(1)})^{\tr})^{-1}\in \GL_r(\mathbb{T})$. Then, by Proposition \ref{P:AGF}, we obtain $(\Upsilon^{(1)})^{\tr}=\Upsilon^{\tr}\Theta^{\tr}$. Moreover, one has 
\begin{equation}\label{E:Phieq}
V^{(-1)}\Phi=\Theta^{\tr}V.
\end{equation}
Thus, we have $\Psi^{(-1)}=\Phi\Psi$ and hence $N_{\phi}$ is rigid analytically trivial.   

\subsubsection{Dual $t$-motive of the tensor powers of the Carlitz module}\label{SS:Dual t-motive for power of Carlitz} Set
\[
N_{C^{\otimes k}}:=N_{C}\otimes_{\mathbb{C}_{\infty}[t]} \cdots \otimes_{\mathbb{C}_{\infty}[t]} N_{C}=\mathbb{C}_{\infty}[\sigma]\otimes_{\mathbb{C}_{\infty}[t]}\cdots \otimes_{\mathbb{C}_{\infty}[t]} \mathbb{C}_{\infty}[\sigma]
\]
and equip it with the diagonal $\sigma$-action. Thus $N_{C^{\otimes k}}$ is a left $\mathbb{C}_{\infty}[t,\sigma]$-module. One can choose $\mathbb{C}_{\infty}[t]$-basis $n:=\mathfrak{d}^{C}_1\otimes \cdots \otimes\mathfrak{d}^{C}_1$ for $N_{C^{\otimes k}}$ so that 
\[
\sigma n=(t-\theta)^kn.
\]
On the other hand, the set $\{ n,(t-\theta)n,\dots,(t-\theta)^{k-1}n\}$ forms a $\mathbb{C}_{\infty}[\sigma]$-basis for $N_{C^{\otimes k}}$ and hence $N_{C^{\otimes k}}\cong \Mat_{1\times k}(\mathbb{C}_{\infty})[\sigma]$. Consider the element $\Omega:=(\omega_C^{(1)})^{-1}$. It can be easily seen that $(\Omega^k)^{(-1)}=(t-\theta)^k\Omega$ and thus implies the rigid analytic triviality of $N_{C^{\otimes k}}$.

\subsubsection{Dual $t$-motive of the tensor product of Drinfeld modules with the tensor powers of the Carlitz module}\label{SS:Dual t-motive for D tensor C} We set 
\[
N_{\phi\otimes C^{\otimes k}}:=N_{\phi}\otimes_{\mathbb{C}_{\infty}[t]}N_{C^{\otimes k}}=\mathbb{C}_{\infty}[\sigma]\otimes_{\mathbb{C}_{\infty}[t]}\left(\mathbb{C}_{\infty}[\sigma]\otimes_{\mathbb{C}_{\infty}[t]}\cdots \otimes_{\mathbb{C}_{\infty}[t]} \mathbb{C}_{\infty}[\sigma]\right)
\]
and equip it with the diagonal $\sigma$-action. It can be seen that $N_{\phi\otimes C^{\otimes k}}$ forms a left $\mathbb{C}_{\infty}[t,\sigma]$-module and it is also a free and finitely generated over $\mathbb{C}_{\infty}[t]$ and $\mathbb{C}_{\infty}[\sigma]$. Moreover, 
\[
(t-\theta)^{k+1}N_{\phi\otimes C^{\otimes k}}\subset \sigma N_{\phi\otimes C^{\otimes k}}.
\] 
We consider the $\mathbb{C}_{\infty}[t]$-basis for $N_{\phi\otimes C^{\otimes k}}$ given by 
$
\mathfrak{d}:=[\mathfrak{d}_1,\dots,\mathfrak{d}_r]:=[\mathfrak{d}^{\phi}_1\otimes n,\dots, \mathfrak{d}^{\phi}_r\otimes n]^{\tr}
$ for $N_{\phi\otimes C^{\otimes k}}$. Note that 
\begin{equation}\label{E:sigmatensor}
\sigma \cdot \mathfrak{d}=(t-\theta)^k\Phi \mathfrak{d}. 
\end{equation}
To see that $N_{\phi\otimes C^{\otimes k}}$ is rigid analytically trivial, we define the matrix $\tilde{\Psi}:=\Omega^{k}\Psi\in \GL_r(\mathbb{T})$ and observe that $\tilde{\Psi}^{(-1)}=(t-\theta)^k\Phi\tilde{\Psi}$.

We set $\tilde{\mathfrak{h}}_r:=1$ and for each $i\in \{1,\dots,r-1\}$, we let 
\[
\tilde{\mathfrak{h}}_i:= k_{i+1}^{(-1)}\mathfrak{d}^{\phi}_2 +k_{i+1}^{(-2)}\mathfrak{d}^{\phi}_3+\dots+k_r^{(-(r-i))}\mathfrak{d}^{\phi}_{r-i+1}.
\]
Moreover, we consider the $\mathbb{C}_{\infty}[\sigma]$-basis for $N_{\phi\otimes C^{\otimes k}}$ defined by
\begin{multline*}
\mathfrak{h}:=\{h_1,\dots,h_{rk+1}\}:=[(t-\theta)^k\tilde{\mathfrak{h}}_r\otimes n,(t-\theta)^{k-1}\tilde{\mathfrak{h}}_1\otimes n,\dots, (t-\theta)^{k-1}\tilde{\mathfrak{h}}_r\otimes n, \\
(t-\theta)\tilde{\mathfrak{h}}_1\otimes n,\dots, (t-\theta)\tilde{\mathfrak{h}}_r\otimes n,
\tilde{\mathfrak{h}}_1\otimes n,\dots, \tilde{\mathfrak{h}}_r\otimes n]^{\tr}
\end{multline*}
and observe that 
\[
t\cdot \mathfrak{h}=\rho_{\theta}^{*}\mathfrak{h}.
\]

\subsection{Logarithms of Anderson $t$-modules}\label{SS:Log of And t-modules} In this section we review the background and some of the main theorems of \cite{Gre22} which gives a factorization theorem for the logarithm function of a $t$-module. We state our first lemma which describes a particular choice of bases for Anderson $t$-motives and dual $t$-motives.

\begin{lemma}\cite[Lem. 2.10]{Gre22} \label{L:basis} Let $G$ be an Anderson $t$-module and $M_{G}$ ($N_G$ respectively) be the corresponding Anderson $t$-motive (dual $t$-motive respectively).
	\begin{itemize}
		\item[(i)] There exists a $\mathbb{C}_{\infty}[t]$-basis $\{c_1,\dots,c_r\}$  ($\{d_1,\dots,d_r\}$ respectively) for $M_G$ ($N_G$ respectively) such that 
		\[
		\tau [c_1,\dots,c_r]^{\tr}= \mathfrak{Q}[c_1,\dots,c_r]^{\tr}
		\]
		and 
		\[
		\sigma [d_1,\dots,d_r]^{\tr}= \mathfrak{Q}^{\tr}[d_1,\dots,d_r]^{\tr}
		\]
		for some $\mathfrak{Q}\in \GL_r(\mathbb{T})\cap \Mat_r(\mathbb{C}_{\infty}[t])$.
		\item[(ii)]  There exists a $\mathbb{C}_{\infty}[\tau]$-basis $\mathcal{G}:=[g_1,\dots,g_d]^{\tr}$ for $M_G$ and a $\mathbb{C}_{\infty}[\sigma]$-basis $\mathcal{H}:=[h_1,\dots,h_d]^{\tr}$ for $N_G$ such that 
		\[
		t\cdot \mathcal{G}= \mathfrak{V}\mathcal{G}
		\]
		and 
		\[
		t\cdot  \mathcal{H}= \mathfrak{V}^{*}\mathcal{H}
		\]
		for some $ \mathfrak{V}\in \Mat_{d\times d}(\mathbb{C}_{\infty})[\tau]$.
	\end{itemize}	
	
\end{lemma}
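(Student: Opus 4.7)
The plan is to handle each part via Anderson's equivalence between $t$-modules, $t$-motives, and dual $t$-motives, modeled on the explicit Drinfeld-case computations already carried out in \S\ref{SS:t-motive for Drinfeld} and \S\ref{SS:Dual t-motives for Drinfeld}.

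For part (ii), I would exploit the identifications $M_G \cong \Mat_{1\times d}(\mathbb{C}_\infty[\tau])$ and $N_G \cong \Mat_{1\times d}(\mathbb{C}_\infty[\sigma])$ built into Anderson's correspondence, together with the $t$-actions $ct^i \cdot m = c\, m\, \phi_{\theta^i}$ and $ct^i \cdot n = c\, n\, \phi_{\theta^i}^*$. Taking $\mathcal{G}$ to be the column whose entries are the standard row-vectors $\mathfrak{e}_1, \ldots, \mathfrak{e}_d$ gives a $\mathbb{C}_\infty[\tau]$-basis of $M_G$, and $t \cdot \mathfrak{e}_i = \mathfrak{e}_i \phi_\theta$ extracts the $i$-th row of $\phi_\theta$, so $t\cdot \mathcal{G} = \phi_\theta \mathcal{G}$. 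Setting $\mathfrak{V} := \phi_\theta$ and performing the mirror construction on $N_G$ with the analogous basis then produces $t \cdot \mathcal{H} = \phi_\theta^* \mathcal{H} = \mathfrak{V}^* \mathcal{H}$, as desired.

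For part (i), I would begin by choosing any $\mathbb{C}_\infty[t]$-basis $m$ of $M_G$, giving $\tau m = \mathfrak{Q} m$ with $\mathfrak{Q}\in \GL_r(\mathbb{T})$: invertibility in $\mathbb{T}$ follows because $(t-\theta)^\mu M_G \subset \tau M_G$ combined with finite generation of $M_G$ over $\mathbb{C}_\infty[\tau]$ forces $\det(\mathfrak{Q})$ to be a $\mathbb{C}_\infty^\times$-multiple of a power of $(t-\theta)$, hence a unit in $\mathbb{T}$. A natural basis $d'$ of $N_G$ coming from Anderson's functor produces a matrix $\mathfrak{Q}'$ for the $\sigma$-action which is related to $\mathfrak{Q}^\tr$ by a twisted conjugation, as the Drinfeld-case identity $V^{(-1)}\Phi = \Theta^\tr V$ of \eqref{E:Phieq} illustrates. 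The core step is to produce an explicit $B \in \GL_r(\mathbb{C}_\infty[t])$ implementing this conjugation, that is $B^{(-1)}\mathfrak{Q}' B^{-1} = \mathfrak{Q}^\tr$; then $c := m$ and $d := B d'$ furnish the required bases.

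The main obstacle will be exhibiting $B$ over the polynomial ring $\mathbb{C}_\infty[t]$ rather than merely over $\mathbb{T}$. The Drinfeld case, where the matrix $V$ of \eqref{E:V def} plays the role of $B$, provides a template: one can build $B$ explicitly as a polynomial matrix in $t$ from the coefficients of $\phi_\theta$. Once $B$ is in hand, the remaining verifications reduce to a direct unwinding of the commutation rules $\tau c = c^{(1)} \tau$ and $\sigma d = d^{(-1)} \sigma$ in the two respective motives.
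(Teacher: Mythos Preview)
The paper does not actually prove Lemma~\ref{L:basis}: it is imported wholesale from \cite[Lem.~2.10]{Gre22}, stated without proof, and used as a black box in the development of \S\ref{SS:Log of And t-modules}. There is therefore no in-paper argument to compare your proposal against.

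That said, your sketch is a reasonable outline of how such a result is typically established. Your treatment of part~(ii) is essentially complete: taking the standard unit row-vectors as a $\mathbb{C}_\infty[\tau]$-basis of $M_G\cong\Mat_{1\times d}(\mathbb{C}_\infty[\tau])$ and the analogous basis of $N_G$ immediately yields $t\cdot\mathcal{G}=\phi_\theta\mathcal{G}$ and $t\cdot\mathcal{H}=\phi_\theta^*\mathcal{H}$, so $\mathfrak{V}=\phi_\theta$ works. For part~(i), however, you correctly flag the real difficulty and then do not resolve it: producing a $B\in\GL_r(\mathbb{C}_\infty[t])$ intertwining the $\sigma$-matrix on $N_G$ with the transpose of the $\tau$-matrix on $M_G$ is the entire content of the claim, and pointing to the Drinfeld-module matrix $V$ of \eqref{E:V def} as a ``template'' is not a proof for general $G$. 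The general argument in \cite{Gre22} proceeds via the canonical duality between $M_G$ and $N_G$ (or equivalently via Anderson's explicit description of both in terms of the same $t$-module data), rather than by an ad hoc construction of $B$; if you want a self-contained proof you would need to either reproduce that duality argument or give a uniform recipe for $B$ valid for arbitrary $\phi_\theta$, which your proposal does not do.
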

\begin{remark}
We comment that the formulas in this paper have no dependence on which particular $\C_\infty[t]$-bases we choose for $M_G$ and $N_G$. However, if we make a change of basis for $\mathcal G$ by $P\in \GL_d(\C_\infty[\tau])$ then we switch to the basis $\mathcal G_1 := P\mathcal G$. This amounts to changing the $\mathbb{F}_q$-algebra homomorphism $\phi$ to the $\mathbb{F}_q$-algebra homomorphism $\tilde{\phi}$ given by $\tilde{\phi}_{\theta}:=P\phi P\inv$ which gives rise to an Anderson $t$-module $(\mathbb{G}^d_{a/\mathbb{C}_{\infty}},\tilde{\phi})$ isomorphic to $G$. We then make a corresponding change of basis for $\mathcal H$ given by $\mathcal H_1 = (P^*)\inv\mathcal H$ so that the conditions of the above corollary are still satisfied. This would change our formulas such as in Theorem \ref{T:log} involving the logarithm function to get instead $P\cdot \Log_G(P\inv\bz)$.
\end{remark}

Let $N\cong \Mat_{1\times r}(\mathbb{C}_{\infty}[t])$ be a dual $t$-motive for some $r\in \mathbb{Z}_{\geq 1}$ and let $h=\{h_1,\dots,h_d\}$ be a $\mathbb{C}_{\infty}[\sigma]$-basis. Any $n\in N$ can be written as
\[
n=\sum_{i=1}^d\left(\sum_{j=0}^{m_i}\alpha_{i,j}\sigma^j\right)h_i
\]  
for some $\alpha_{i,j}\in \mathbb{C}_{\infty}$ and $m_{i}\in \mathbb{Z}_{\geq 0}$. Then we define the map $\delta_{0}^N:N\to \mathbb{C}_{\infty}^d$ by 
\[
\delta_{0}^N(n):=\begin{pmatrix}
\alpha_{0,1}\\
\vdots\\
\alpha_{0,d}
\end{pmatrix}.
\]
Now let $\{d_1,\dots,d_r\}$ be a $\mathbb{C}_{\infty}[t]$-basis for $N$ as in Lemma \ref{L:basis}(i). We consider 
\[
\tilde{N}:=\oplus_{i=1}^r \mathbb{C}_{\infty}(t)d_i\cong \Mat_{1\times r}(\mathbb{C}_{\infty}(t))
\]
and for any $\tilde{n}\in \tilde{N}$, write $\tilde{n}=\sum_{i=1}^da_i d_i$ for some $a_i\in \mathbb{C}_{\infty}(t)$. We define 
\begin{equation}\label{E:sigmainv}
\sigma^{-1}(\tilde{n}):=(\mathfrak{Q}^{-1})^{(1)}\begin{pmatrix}
a_1\\
\vdots\\
a_r
\end{pmatrix}^{(1)}.
\end{equation}
Moreover, we consider $N_{\theta}:=N\otimes_{\mathbb{C}_{\infty}[t]}\TT_{\theta}$. By \cite[Prop. 2.18]{Gre22}, there exists an extension of $\delta_{0}^{N}$-map to $\delta_{0}^{N}:N_{\theta}\to \mathbb{C}_{\infty}^d$.

\begin{remark} If $n\in N$, then one can write $n=\sum_{i=0}^ra_i c_i$ for some $a_i\in \mathbb{C}_{\infty}[t]$. Since $\det(\mathfrak{Q})=(t-\theta)^{\ell}g$ for some $\ell\in \mathbb{Z}_{\geq 1}$ and $g\in \mathbb{C}_{\infty}^{\times}$, each entry of $\sigma^{-1}(n)\in \tilde{N}$ can be written as a ratio $F(t)/G(t)$ of polynomials $F(t),G(t)\in \mathbb{C}_{\infty}[t]$ so that $G(t)=(t-\theta^q)^{\ell'}g'$ for some $\ell'\in \mathbb{Z}_{\geq 0}$ and $g'\in \mathbb{C}_{\infty}^{\times}$. Thus, one can evaluate $\sigma^{-j}(n)$ at $\delta_{0}^{N}$ for any integer $j$. We refer the reader to \cite[Prop. 2.18]{Gre22} for details on this extensions of $\delta_0^N$.
\end{remark}

\begin{definition}\label{D:delta_1 extension} We define another crucial map for our purposes. Let $M\cong \Mat_{1\times d}(\mathbb{C}_{\infty})[\tau]$ be an Anderson $t$-motive and fix $\zz=(z_1,\dots,z_d)^{\tr} \in \mathbb{C}_{\infty}^d$. We define $\delta_{1,\zz}^M:M\to \mathbb{C}_{\infty}$ by 
\begin{equation}\label{D:delta_1 map}
\delta_{1,\zz}^M(m):=m\zz:=m_1(z_1)+\dots+m_d(z_d), \ \ m=[m_1,\dots,m_d]\in \Mat_{1\times d}(\mathbb{C}_{\infty})[\tau],
\end{equation}
where we view $\tau$ as acting as the $q$-power Frobenius. We further define $M_{\zz}$ to be the set of elements $(\mathfrak{a}_1,\dots,\mathfrak{a}_d)$ where, for each $i\in \{1,\dots,d\}$,  $\mathfrak{a}_i=\sum_{j=0}^{\infty}a_{i,j}\tau^j\in \mathbb{C}_{\infty}[[\tau]]$ satisfies 
$
\left(a_{1,\mu}\tau^\mu,\dots,a_{d,\mu}\tau^\mu \right)\zz\to 0 $ as $\mu\to \infty$. Then we extend the map $\delta_{1,\zz}^M$ to $M_{\zz}$ by defining $\delta_{1,\zz}^{M}:M_{\zz}\to \mathbb{C}_{\infty}$ as
$
\delta_{1,\zz}^M(\tilde{m}):=\lim_{\mu\to \infty}\delta_{1,\zz}([\mathfrak{a}^{\mu}_1,\dots,\mathfrak{a}^{\mu}_d])
$
where
$
\tilde{m}=\left[\sum_{j=0}^{\infty}a_{1,j}\tau^j,\dots, \sum_{j=0}^{\infty}a_{d,j}\tau^j\right]
$
and 
$\mathfrak{a}_i^{\mu}:=\sum_{j=0}^{\mu}a_{i,j}\tau^j$. Finally, we extend $\delta_{1,\bz}^{M}$ to vectors in $M_\bz^d$ by acting coordinate-wise. Again, we refer the reader to \cite[Def. 2.19]{Gre22} for full details on this extension.
\end{definition}

The following was one of the main theorems of \cite{Gre22} and gives an interpretation of the logarithm function of an Anderson $t$-module in terms of a limit of evaluations of the motivic maps $\delta_{1,\zz}^M$ and $\delta_0^N$ given above. After substituting definitions, this formula becomes an infinite product of matrices (or a finite sum of such terms), hence we call it a factorization of the logarithm. Before stating it, we note that, in what follows, using a $\mathbb{C}_{\infty}[\tau]$-basis $\{g_1,\dots,g_d\}$ of $M_G$ as described in Lemma \ref{L:basis}, we identify $M_G$ via an isomorphism between $M_G$ and the space $\Mat_{1\times d}(\mathbb{C}_{\infty})[\tau]$ of $d$-dimensional row vectors that send $g_{\ell}$ to the $\ell$-th unit vector for each $1\leq \ell \leq d$. Hence for each $n\geq 0$, we emphasize that $\tau^n(g_{\ell})\in \Mat_{1\times d}(\mathbb{C}_{\infty})[\tau] $.

\begin{theorem}\cite[Thm. 4.4]{Gre22}\label{T:log} Let $M_{G}$ ($N_{G}$ respectively) be the Anderson $t$-motive (dual $t$-motive respectively) corresponding to $G$. Let $\mathcal{G}$ and $\mathcal{H}$ be the $\mathbb{C}_{\infty}[\tau]$-basis ($\mathbb{C}_{\infty}[\sigma]$-basis respectively) of $M_G$ ($N_G$ respectively) as in Lemma \ref{L:basis}(ii). Then
\[
\Log_{G}=\lim_{n\to \infty}\sum_{i=0}^{n}\sum_{\mu=1}^d\delta_{0}^{N_G}(\sigma^{-i}(h_\mu))\tau^i(g_\mu).
\]
Moreover, let $\zz$ be an element in the domain of convergence of $\Log_{G}$. Then
\[
\Log_{G}(\zz)=\delta_{1,\zz}^{M_{G}}\left(\lim_{n\to \infty}\sum_{i=0}^{n}\sum_{\mu=1}^d\delta_{0}^{N_G}(\sigma^{-i}(h_\mu))\tau^i(g_\mu)\right).
\]
%	\[
%	\Log_{G}(\zz)=\lim_{n\to \infty}\delta_{1,\zz}^{M_G}\left((t\Id_d-d[\theta])^{-1}\sum_{\mu=1}^d\sum_{\nu=0}^{\ell-1}\delta_{0}^{N_G}(\sigma^{\nu-n}(h_\mu))\tau^n(\mathcal{G}^{\tr}\Theta_{\phi,\tau^{\ell-\nu}}^{\tr}\mathfrak{e}_\mu )\right).
%	\]
\end{theorem}
\begin{proof} For completeness, we sketch the details of the proof. Let $\Log_{G}=\sum_{i=0}^{\infty} P_i\tau^i$ and  we let $P_i=(a_{j,k,i})_{j,k}\in \Mat_{d}(\mathbb{C}_{\infty})$.
We write
\begin{multline*}
\Log_G=\sum_{i=0}^{\infty}\begin{pmatrix} a_{1,1,i}&a_{1,2,i}&\cdots & a_{1,d,i}\\
a_{2,1,i}&a_{2,2,i}&\cdots & a_{2,d,i}\\
\vdots & \vdots & & \vdots\\
a_{d,1,i}&a_{d,2,i}&\cdots & a_{d,d,i}
\end{pmatrix}\tau^i=
\sum_{i=0}^{\infty}\begin{pmatrix} a_{1,1,i}\tau^i&a_{1,2,i}\tau^i&\cdots & a_{1,d,i}\tau^i\\
a_{2,1,i}\tau^i&a_{2,2,i}\tau^i&\cdots & a_{2,d,i}\tau^i\\
\vdots & \vdots & & \vdots\\
a_{d,1,i}\tau^i&a_{d,2,i}\tau^n&\cdots & a_{d,d,i}\tau^i
\end{pmatrix}\\
=\sum_{i=0}^{\infty}\begin{pmatrix}
    a_{1,1,i}\\
    \vdots\\
    a_{d,1,i}
\end{pmatrix}\tau^i[1,0,\dots,0]+\cdots+ \sum_{i= 0}^{\infty}\begin{pmatrix}
    a_{1,d,i}\\
    \vdots\\
    a_{d,d,i}
\end{pmatrix}\tau^i[0,\dots,0,1]\\
=\sum_{i=0}^{\infty}\begin{pmatrix}
    a_{1,1,i}\\
    \vdots\\
    a_{d,1,i}
\end{pmatrix}\tau^i(g_1)+\cdots+ \sum_{n\geq 0}\begin{pmatrix}
    a_{1,d,i}\\
    \vdots\\
    a_{d,d,i}
\end{pmatrix}\tau^i(g_d)\\
=\lim_{n\to \infty}\left(\sum_{i=0}^n\sum_{\mu=1}^d \delta_0^{N_G}(\sigma^{-i}(h_\mu)) \tau^i(g_{\mu})\right)
\end{multline*}
where the last equality follows from the fact that $ \delta_0^{N_G}(\sigma^{-i}(h_\mu))=(a_{1,\mu,i},\dots,a_{d,\mu,i})^{\tr} $ (\cite[Cor. 4.5]{Gre22}). On the other hand, for $\zz=[z_1,\dots,z_d]\in \mathbb{C}_{\infty}^{d}$ in the domain of convergence of $\Log_G$, using the definition of $\delta_{1,\zz}^{M_G}$, we obtain
\begin{multline*}
\delta_{1,\zz}^{M_{G}}\left(\lim_{n\to \infty}\sum_{i=0}^{n}\sum_{\mu=1}^d\delta_{0}^{N_G}(\sigma^{-i}(h_\mu))\tau^i(g_\mu)\right)\\=\sum_{i= 0}^{\infty}\begin{pmatrix} a_{1,1,i}\tau^i&a_{1,2,i}\tau^i&\cdots & a_{1,d,i}\tau^i\\
a_{2,1,i}\tau^i&a_{2,2,i}\tau^i&\cdots & a_{2,d,i}\tau^i\\
\vdots & \vdots & & \vdots\\
a_{d,1,i}\tau^i&a_{d,2,i}\tau^i&\cdots & a_{d,d,i}\tau^i
\end{pmatrix}\begin{pmatrix}
    z_1\\
    \vdots\\
    \vdots\\
    z_d
\end{pmatrix}=\Log_G(\zz)
\end{multline*}
as desired.
\end{proof}

Let $G=(\mathbb{G}_{a/\mathbb{C}_{\infty}}^d,\phi)$ be an Anderson $t$-module given as in Definition \ref{D:t-modules}. For each $j\in \{0,\dots,\ell-1\}$, we set 
\begin{equation}\label{D:Theta tau}
\Theta_{\phi,\tau^{\ell-j}}:=A_{j+1}^{(-j)}\tau+\dots + A_{\ell}^{(-j)}\tau^{\ell-j}.
\end{equation} 

Continuing with the notation of Theorem \ref{T:log}, our next proposition may be deduced from \cite[Prop. 2.15, Prop. 5.4.3]{Gre22}.
\begin{proposition} \label{P:deltazeromap} Let $\mathfrak{e}_i\in \Mat_{d\times 1}(\mathbb{F}_q)$ be the $i$-th unit vector. Then we have 	
\[
	(t\Id_d-d[\theta])\sum_{i=0}^{n}\sum_{\mu=1}^d\delta_{0}^{N_G}(\sigma^{-i}(h_\mu))\tau^i(g_\mu)=\sum_{\mu = 1}^{d}\sum_{\nu=0}^{\ell-1}\delta_0^{N_G}(\sigma^{\nu-n}(h_\mu))\tau^n\left(\mathfrak{e}_\mu^{\tr}\Theta_{\phi,\tau^{\ell-\nu}} \mathcal{G} \right).
    \]
    \end{proposition}

%We finish this subsection with the following useful lemma.
%\begin{lemma}\label{L:sum in Mz} Let $\zz$ be an element in the domain of convergence of $\Log_G$. Then we have 
%	\[
%	\lim_{n\to \infty}(t\Id_d-d[\theta])^{-1}\sum_{\mu = 1}^{d}\sum_{\nu=0}^{\ell-1}\delta_0^{N_G}(\sigma^{\nu-n}(h_\mu))  \tau^n\left(\mathfrak{e}_{\mu}^{\tr}\Theta_{\phi,\tau^{\ell-\nu}} \mathcal{G} \right)\in (M_G)_{\zz}.
%	\]
%\end{lemma}
%\begin{proof} Set 
%	\[
%	G(1,1;\zz):=\lim_{n\to \infty}\delta_{1,\zz}^{M_G}\left(\sum_{i=0}^{n}\sum_{\mu=1}^d\delta_{0}^{N_G}(\sigma^{-i}(h_\mu))\tau^i(g_\mu)\right).
%	\] 
%	Then by  \cite[Thm. 4.4]{Gre22}, we have $G(1,1;\zz)=\Log_{G}(\zz)$. This indeed implies that 
%	\begin{equation}\label{E:limiting}
%	\lim_{n\to \infty}\left(\sum_{i=0}^{n}\sum_{\mu=1}^d\delta_{0}^{N_G}(\sigma^{-i}(h_\mu))\tau^i(g_\mu)\right)\in (M_G)_{\zz}.
%	\end{equation}
%	On the other hand by \cite[Prop. 2.15, Prop. 5.4(3)]{Gre22}, we also have 
%	\[
%	(t\Id_d-d[\theta])\sum_{i=0}^{n}\sum_{\mu=1}^d\delta_{0}^{N_G}(\sigma^{-i}(h_\mu))\tau^i(g_\mu)=\sum_{\mu = 1}^{d}\sum_{\nu=0}^{\ell-1}\delta_0^{N_G}(\sigma^{\nu-n}(h_\mu))  \tau^n\left(\mathfrak{e}_\mu^{\tr}\Theta_{\phi,\tau^{\ell-\nu}} \mathcal{G} \right),
%	\]
%which finishes the proof of the lemma after taking the limit as $n\to \infty$.
%\end{proof}

\subsection{The map $\varphi$} \label{SS:varphi map}
Throughout this subsection, we fix a Drinfeld module $\phi$ given by
\begin{equation}\label{E:DMfixed}
\phi_{\theta} = \theta + k_1 \tau + \dots + k_r\tau^r
\end{equation}
so that $\inorm{k_i}\leq 1$ for each $1\leq i \leq r-1$ and $k_r\in \mathbb{F}_q^{\times}$. 

Recall that $\{\mathfrak{c}_r\}$ constitutes a $\mathbb{C}_{\infty}[\tau]$-basis for $M_{\phi}$.  Our goal in this subsection is to construct an extension of the isomorphism  $\tilde{\varphi}:M_{\phi}\cong \Mat_{1\times r}(\mathbb{C}_{\infty}[t])$ of $\mathbb{C}_{\infty}[t,\tau]$-modules given by 
\[
\tilde{\varphi}\left(\sum_{n\geq 0}a_n\tau^n(\mathfrak{c}_r)\right):=a_0[0,\dots,0,1]+\sum_{n\geq 1}a_n[0,\dots,0,1]\prod_{\ell=1}^{n}(\Phi^{\tr})^{(n-\ell)},\ \ a_n\in \mathbb{C}_{\infty}.
\]

In order to construct such an extension, we will make some analysis on the entries of $\tilde{\varphi}(\tau^n(\mathfrak{c}_r))$ to determine the domain of our extension as well as to prove that it is injective in Proposition \ref{P:inj}.

We now let $f_{r,0}:=1$, $f_{1,0}=\cdots=f_{r-1,0}:=0$ and for $n\geq 1$, define $f_{1,n},\dots,f_{r,n}\in \mathbb{C}_{\infty}[t]$ so that 
\begin{equation}\label{E:fnotation}
\tilde{\varphi}(\tau^n(\mathfrak{c}_r))=[0,\dots,0,1]\prod_{\ell=1}^{n}(\Phi^{\tr})^{(n-\ell)}=[f_{1,n},\dots,f_{r,n}]\in \Mat_{1\times r}(\mathbb{C}_{\infty}[t]).
\end{equation}

Next we prove two key lemmas. Before stating them, for any $\ell\in \mathbb{Z}_{\geq 1}$ and $0\leq m \leq r-1$, we consider the degree $\ell$-polynomial in $t$ with coefficients in $A$ given by
\[
\mathfrak{p}_{\ell,m}(t):=\prod_{\mu=0}^{\ell-1}(t-\theta^{q^{m+\mu r}})=(t-\theta^{q^m})(t-\theta^{q^{m+r}})\cdots(t-\theta^{q^{m+(\ell-1)r}})\in A[t].
\]
We also let $\mathfrak{p}_{0,m}(t):=1$.
\begin{lemma}\label{L:t degree of phi} Let $n = sr + j$ for $s\in \mathbb{Z}_{\geq 0}$ and $0\leq j\leq r-1$. Then the following statements hold.
\begin{itemize}
\item[(i)]  Let $\mathfrak{M}:=\{z\in \mathbb{C}_{\infty} \ \ | \ \ |z|\leq 1\}$. For each $1\leq i \leq r$, $f_{i,n}$ can be written as an $\mathfrak{M}$-linear combination of polynomials $\mathfrak{p}_{\tilde{s},\tilde{j}}(t)$ so that $0\leq \tilde{s} \leq s$, $0\leq \tilde{j}\leq r-1$ and $\tilde{s}r+\tilde{j}\leq n$. Moreover, we have $\deg_t(f_{r-j,n}) = s$ and 
\begin{equation}\label{E:strucoff}
f_{r-j,n}=a\mathfrak{p}_{s,j}(t)  +\sum_{\substack{0\leq \tilde{s}<s\\0\leq \tilde{j}\leq r-1}}\beta_{\tilde{s},\tilde{j}}\mathfrak{p}_{\tilde{s},\tilde{j}}(t)
\end{equation}
for some  $a\in \F_q^\times$ and $\beta_{\tilde{s},\tilde{j}}\in \mathfrak{M}$ for each $\tilde{s}$ and $\tilde{j}$.
\item[(ii)] $\deg_t(f_{r-j,n}) \geq \deg_t(f_{r-i,n})$ for $i<j$.
\item[(iii)] $\deg_t(f_{r-j,n}) > \deg_t(f_{r-i,n})$ for $i>j$.
\item[(iv)] $\tilde{\varphi}(\tau^{j}(\mathfrak{c}_r))=[0,\dots,1,*,\dots,*]$ where the coordinates having $*$ consist of elements in $\mathfrak{M}$ and $1$ occurs in the $(r-j)$-th coordinate.
%\item[(iv)] $\deg_\theta(f_{r-j,n}) = q^j + q^{r+j} + \dots + q^{(s-1)r+j}$
%\item[(v)] $\deg_\theta(f_{r-j,n}) \geq \deg_\theta(f_{r-i,n})$ for $i<j$
%\item[(vi)] $\deg_\theta(f_{r-j,n}) > \deg_\theta(f_{r-i,n})$ for $i>j$
 
\end{itemize}
\end{lemma}

\begin{proof} Since $|k_i|\leq 1$ for $i=1,\dots,r-1$ and $k_r\in \mathbb{F}_q^{\times}$, (iv) immediately follows from a direct computation. We prove the remaining parts. First, by part (iv),
we obtain  
\[
\tilde{\varphi}(\tau^{r-1}(\mathfrak{c}_r))=[1,b_2,\dots,b_r],
\]
where $b_i\in \mathfrak{M}$. Thus
\[
\tilde{\varphi}(\tau^{r}(\mathfrak{c}_r))=[c_1,c_2,\dots,(t-\theta)/k_r + c_r],
\]
again for constants $c_i\in \C_\infty$ with $|c_i|\leq 1$, so we see directly that the lemma is true for $r$ (with $s=1$ and $j=0$). By direct computation involving \eqref{E:Phi def Drinfeld modules}, we see that
\begin{equation}\label{E:multip}
\tilde{\varphi}(\tau^{n+1}(\mathfrak{c}_r)) = [f_{1,n},\dots,f_{r,n}]\twist\begin{pmatrix}
&& & &  \frac{t-\theta}{k_r}\\
1& &  & & -\frac{k_1^{(-1)}}{k_r}\\
&\ddots &  &  & \vdots\\
& &\ddots &  &  \\
& &  & 1 & -\frac{k_{r-1}^{(-(r-1))}}{k_r}
\end{pmatrix} = [f_{1,n+1},\dots,f_{r,n+1}].
\end{equation}
Observe by \eqref{E:multip} that 
\begin{equation}\label{E:multip2}
f_{r-\ell-1,n+1}=f_{r-\ell,n}^{(1)} \ \ \textit{ for } \ \  0\leq \ell \leq r-2
\end{equation}
and 
\begin{equation}\label{E:multip222}
f_{r,n+1}=\frac{1}{k_r}\left(f_{1,n}^{(1)}(t-\theta)-f^{(1)}_{2,n}k_1-\cdots-f_{r-1,n}^{(1)}k_{r-2}-f_r^{(1)}k_{r-1}\right).
\end{equation}
Again, a direct computation, by using \eqref{E:multip}, implies that the lemma holds for $n=r+j$ for $1\leq j \leq r-1$. Then, we assume by induction that the lemma is true for $n$. We further note that using (iv) and assuming \eqref{E:strucoff}, the first assertion of (i) follows from \eqref{E:multip2} and \eqref{E:multip222}. Hence, we divide our argument into three cases to show (ii), (iii) and \eqref{E:strucoff} to finish the proof of the lemma.

\textbf{Case 1:} If $j=0$, then $n+1=sr+1$. By the induction hypothesis, $\deg_t(f_{r,n}) > \deg_t(f_{r-i,n})$ for $i>0$. Thus, since, $\deg_t(f_{r,n+1})\leq \deg_t(f_{1,n})+1\leq \deg_t(f_{r,n})=\deg_t(f_{r-1,n+1})$, by \eqref{E:multip2}, we see that (ii) and (iii) also hold for $n+1$. For (i), we simply obtain by the induction hypothesis 
\[
f_{r-1,n+1}=f_{r,n}^{(1)}=a(t-\theta^q)(t-\theta^{q^{r+1}})\cdots (t-\theta^{q^{(s-1)r+1}})+\sum_{\substack{0\leq \tilde{s}<s\\0\leq \tilde{j}\leq r-1}}\beta_{\tilde{s},\tilde{j}}^q\mathfrak{p}_{\tilde{s},\tilde{j}}(t)^{(1)}
\]
implying that \eqref{E:multip2} holds for $n+1$ where  $a\in \F_q^\times$ and $\beta_{\tilde{s},\tilde{j}}\in \mathfrak{M}$ for each $\tilde{s}$ and $\tilde{j}$.

\textbf{Case 2:} If $0<j<r-1$, then $n+1=sr+j+1$.  By the induction hypothesis, $\deg_t(f_{r-j,n}) \geq \deg_t(f_{r-i,n})$ for $i<j$ and $\deg_t(f_{r-j,n}) > \deg_t(f_{r-i,n})$ for $i>j$. Thus, combining with \eqref{E:multip2}, we have $\deg_t(f_{r-(j+1),n+1})=\deg_t(f_{r-j,n}) \geq \deg_t(f_{r-i,n})=\deg_t(f_{r-(i+1),n+1})$ for $i<j$ and  $\deg_t(f_{r-(j+1),n+1})=\deg_t(f_{r-j,n}) > \deg_t(f_{r-i,n})=\deg_t(f_{r-(i+1),n+1})$ for $i>j$. Hence (ii) and (iii) also hold for $n+1$. For \eqref{E:multip2}, we again obtain by the induction hypothesis 
\[
f_{r-(j+1),n+1}=f_{r-j,n}^{(1)}=a(t-\theta^{q^{j+1}})(t-\theta^{q^{r+j+1}})\cdots (t-\theta^{q^{(s-1)r+j+1}})+\sum_{\substack{0\leq \tilde{s}<s\\0\leq \tilde{j}\leq r-1}}\beta_{\tilde{s},\tilde{j}}^q\mathfrak{p}_{\tilde{s},\tilde{j}}(t)^{(1)}
\]
implying that \eqref{E:strucoff} holds for $n+1$ where  $a\in \F_q^\times$ and $\beta_{\tilde{s},\tilde{j}}\in \mathfrak{M}$ for each $\tilde{s}$ and $\tilde{j}$.

\textbf{Case 3:} If $j=r-1$, then $n+1=sr+r=(s+1)r$. By the induction hypothesis, $\deg_t(f_{1,n}) \geq \deg_t(f_{r-m,n})$ for $0\leq m\leq r-1$. Thus,  by \eqref{E:multip222} and the fact that $\deg_t(f_{r-i,n+1})=\deg_t(f_{r-i+1,n})<\deg_t(f_{1,n+1})+1=\deg_t(f_{r,n+1})$ for $0<i<r$, we see that (iii) also holds for $n+1$ ((ii) is an empty statement in this case). For \eqref{E:multip2}, we obtain by the induction hypothesis that 
\[
f_{r,n+1}=
a(t-\theta)(t-\theta^{q^{r}})\cdots (t-\theta^{q^{(s-1)r+r}})+\sum_{\substack{0\leq \tilde{s}<s+1\\0\leq \tilde{j}\leq r-1}}\beta_{\tilde{s},\tilde{j}}^q\mathfrak{p}_{\tilde{s},\tilde{j}}(t)^{(1)}
\]
implying that \eqref{E:strucoff} holds for $n+1$ where  $a\in \F_q^\times$ and $\beta_{\tilde{s},\tilde{j}}\in \mathfrak{M}$ for each $\tilde{s}$ and $\tilde{j}$. Hence it finishes the proof of part (i), (ii) and (iii). 
\end{proof}

Next, using the notation in \eqref{E:fnotation}, for $1\leq \mu \leq r$ and  $n,\nu\geq 0$, we define $b_{\mu,n,\nu}\in \mathbb{C}_{\infty}$ given by the equality 
\begin{equation}\label{E:coefb}
f_{\mu,n}=\sum_{\nu\geq 0}b_{\mu,n,\nu}t^{\nu}\in \mathbb{C}_{\infty}[t].
\end{equation}

\begin{lemma}\label{L:maxnorm} Let $n=sr+j$  for  $s\in \mathbb{Z}_{\geq 0}$ and $0\leq j\leq r-1$. Let $0\leq \nu\leq s$ and set $\alpha_{n,\nu}:=\max\{|b_{1,n,\nu}|,\dots,|b_{r,n,\nu}|\}$, that is, the maximum among the $|\cdot|$-norms of the $t^{\nu}$-coefficients of the entries of $\tilde{\varphi}(\tau^{n}(\mathfrak{c}_r))\in \Mat_{1\times r}(\mathbb{C}_{\infty}[t])$. Then we have
\[
\log_q(\alpha_{n,\nu})\leq \frac{q^{n}-q^{\nu r+j}}{q^r-1}.
\]
\end{lemma}
\begin{proof} Let
 $c_{\ell,m,i}$ denote the $t^i$-coefficient of $\mathfrak{p}_{\ell,m}(t)$. By Vieta's formulas, we have 
\[
c_{\ell,m,i}=\begin{cases}
1& \text{ if } i=\ell\\
(-1)^{\ell-i}\displaystyle\sum_{0\leq \mu_1<\mu_2<\cdots<\mu_{\ell-i}\leq \ell}\left(\prod_{\nu=1}^{\ell-i}\theta^{q^{m+\mu_\nu r}}\right) & \text{ if } i<\ell.
\end{cases}
\]
Since $|\cdot|$ is a nonarchimedean norm, letting $\mu_\nu=i+\nu-1$ for $1\leq \nu \leq \ell-i$ above, we see that the norm of $c_{\ell,m,i}$ is bounded by the norm of $\theta^{q^{m+ir}}\cdots \theta^{q^{m+(\ell-1) r}}$. In particular, we have
\[
\log_q(|c_{\ell,m,i}|)\leq q^{m}\left(q^{ir}+\cdots+q^{(\ell-1)r} \right).
\]

 We now compare the $t^{\nu}$-coefficient of coordinates of $\tilde{\varphi}(\tau^{n}(\mathfrak{c}_r))$. Since, by Lemma \ref{L:t degree of phi}(i), for any $1\leq \mu \leq r$, $f_{\mu,n}$ can be written as an $\mathfrak{M}$-linear combination of polynomials $\mathfrak{p}_{\tilde{s},\tilde{j}}(t)$ so that $0\leq \tilde{s} \leq s$,  $0\leq \tilde{j}\leq r-1$ and $\tilde{s}r+\tilde{j}\leq n=sr+j$, it suffices to analyze the norm of the coefficients $c_{\tilde{s},\tilde{j},\nu}$ to prove the lemma. Thus we have
\begin{multline*}
  \log_q(|c_{\tilde{s},\tilde{j},\nu}|)\leq  q^{\tilde{j}}\left(q^{\nu r}+\cdots+q^{(\tilde{s}-1)r} \right) \leq q^{j}\left(q^{\nu r}+\cdots+q^{(s-1)r} \right)=q^{j}\left(\frac{q^{sr}-1}{q^r-1}- \frac{q^{\nu r}-1}{q^r-1}\right)\\= \frac{q^{n}-q^{\nu r+j}}{q^r-1}
    \end{multline*}
as desired.
\end{proof}

Before we introduce an extension of the map $\tilde{\varphi}$, we state our next lemma.

\begin{lemma}\label{L:boundtau} For each $n\geq 0$, we have 
\[
\log_q\left(||\tilde{\varphi}(\tau^n(\mathfrak{c}_r))||\right)\leq \frac{q^{n+r-1}}{q^r-1}-\frac{q^{r-1}}{q^r-1}.
\]
\end{lemma}
\begin{proof} Let again $n=sr+j$ for some $s\in \mathbb{Z}_{\geq 0}$ and $0\leq j\leq r-1$. By Lemma \ref{L:t degree of phi}, we see that 
\begin{equation}\label{E:estimate0}
\log_q\left(||\tilde{\varphi}(\tau^n(\mathfrak{c}_r))||\right)=\log_q(||f_{r-j,n}||)=\log_q(||\mathfrak{p}_{s,j}(t)||)=q^j(1+q^r+\cdots+q^{(s-1)r})=q^j\left(\frac{q^{sr}-1}{q^r-1}\right).
\end{equation}
Finally noting that $0\leq j \leq r-1$ and using \eqref{E:estimate0}, we obtain 
\[
\log_q\left(||\tilde{\varphi}(\tau^n(\mathfrak{c}_r))||\right)= q^j\left(\frac{q^{sr}-1}{q^r-1}\right)\leq q^{r-1}\left(\frac{q^{sr}-1}{q^r-1}\right)\leq q^{r-1}\left(\frac{q^{n}-1}{q^r-1}\right)
\]
as desired.
\end{proof}

Identifying $M_{\phi}$ with $\mathbb{C}_{\infty}[\tau]$ by sending each $k_r^{-1}\tau^n(\mathfrak{c}_r)$ to $\tau^n$ for $n\geq 0$, by a slight abuse of notation, we now denote the aforementioned isomorphism of the $\mathbb{C}_{\infty}[t,\tau]$-modules by the map $\tilde{\varphi}:M_{\phi}\to\Mat_{1\times r}(\mathbb{C}_{\infty}[t])$ given by
\[
\tilde{\varphi}\left(\sum_{n\geq 0}a_n\tau^n\right):=\left[\sum_{n\geq 0}k_r^{-1}a_nf_{1,n},\dots, \sum_{n\geq 0}k_r^{-1}a_nf_{r,n}\right],\ \ a_n\in \mathbb{C}_{\infty}.
\]

Our next goal is to construct the domain of the extension of the map $\tilde{\varphi}$. To ease the notation in what follows, let us set $\mathfrak{v}:=q^{\frac{q^{r-1}}{q^r-1}}$ and define 
\[
\mathbb{M}:=\left\{\sum_{n=0}^{\infty}a_n\tau^n\ \ | \ \ a_n\in \mathbb{C}_{\infty}, \ \ |a_n|\mathfrak{v}^{q^n}\to 0 \text{ as } n\to \infty\right\}.
\]
We further set
\[
\left|\sum_{n=0}^{\infty}a_n\tau^n\right|_{\mathfrak{v}}:=\max_{n} |a_n|\mathfrak{v}^{q^n}, \ \ \ \ \  \sum_{n=0}^{\infty}a_n\tau^n\in \mathbb{M}.
\]
It is clear that $(\mathbb{M},|\cdot|_\mathfrak{v})$ forms a normed $\mathbb{C}_{\infty}$-vector space and $(M_{\phi},|\cdot|_{\mathfrak{v}})$ is a dense normed $\mathbb{C}_{\infty}$-vector subspace of $(\mathbb{M},|\cdot|_\mathfrak{v})$. Moreover, by Remark \ref{R:roclog}, we see that, for each $r\geq 1$, the logarithm function $\log_{\phi}$ converges at each point of the disk of radius $q^{q^{r-1}/(q^r-1)}<q^{q^r/(q^r-1)}$ centered at $0$  and hence $\log_{\phi}\in \mathbb{M}$.

Let $\mathcal{G}=\sum_{n=0}^ma_n\tau^n\in M_{\phi}\subset \mathbb{M}$ for some $m\in \mathbb{Z}_{\geq 0}$. By the ultrametric property of $||\cdot||$ on $\Mat_{1\times r}(\mathbb{C}_{\infty}[t])$ and Lemma \ref{L:boundtau}, it can be seen that 
\[
 ||\tilde{\varphi}(\mathcal{G})||=||\sum_{n=0}^ma_n\tilde{\varphi}(\tau^n(\mathfrak{c}_r))||\leq \max_{0\leq n \leq m} |a_n|||\tilde{\varphi}(\tau^n(\mathfrak{c}_r)|| \leq \max_{0\leq n \leq m} |a_n|\mathfrak{v}^{q^n}=|\mathcal{G}|_{\mathfrak{v}}.
\]
Hence, $\tilde{\varphi}$ is a continuous and bounded $\mathbb{C}_{\infty}$-linear map. Since  $(\Mat_{1\times r}(\mathbb{T}),||\cdot||)$ is a Banach space over $\mathbb{C}_{\infty}$ and $M_{\phi}$ is dense in $\mathbb{M}$, there exists a unique bounded extension $\varphi:\mathbb{M}\to \Mat_{1\times r}(\mathbb{T})$ of $\tilde{\varphi}$ defined by 
\[
\varphi\left(\lim_{n\to \infty}\mathcal{G}_n\right):=\lim_{n\to \infty}\tilde{\varphi}(\mathcal{G}_n)
\]
provided that $\lim_{n\to \infty}\mathcal{G}_n $ exists and lies in $\mathbb{M}$ (see \cite[Thm. 5.19]{HN01}).

Our final goal is to show that the extension map $\varphi$ is injective.

\begin{proposition}\label{P:inj}
Let
\[f = \varphi\left(\sum_{n=0}^{\infty}a_n\tau^n\right) , \ \ \ \sum_{n=0}^{\infty}a_n\tau^n\in \mathbb{M}. \]
 Then $f=0$ if and only if each $a_n=0$. In particular, $\varphi$ is injective.
\end{proposition}

\begin{proof}  Before beginning the proof, we comment by way of aiding the reader's understanding that the proof presented here is a more complicated version of the proof of a similar result from \cite[(2.4.3)]{CGM18}. Since one direction is obvious, we prove the other direction. Moreover, since $k_r\in \mathbb{F}_q^{\times}$, without loss of generality, we assume that $k_r=1$ and hence we simply identify each $\tau^n(\mathfrak{c}_r)$ with $\tau^n$ for $n\geq 0$. 
Using the coefficients $b_{\mu,n,\nu}\in \mathbb{C}_{\infty}$ defined in \eqref{E:coefb}, we have
\[
\varphi\left(\sum_{n=0}^{\infty}a_n\tau^n\right)=\left[\sum_{n=0}^{\infty}a_nf_{1,n},\dots,\sum_{n =0}^{\infty}a_nf_{r,n} \right]=\left[\sum_{\nu=0}^{\infty}\left(\sum_{n=0}^{\infty}a_nb_{1,n,\nu}\right)t^{\nu},\dots, \sum_{\nu= 0}^{\infty}\left(\sum_{n=0}^{\infty}a_nb_{r,n,\nu}\right)t^{\nu} \right].
\]
Then we write
\begin{equation}\label{E:Tate module infinite sums0}
f=\begin{pmatrix}
(\sum_{n=0}^{\infty} a_nb_{1,n,0}) + (\sum_{n=0}^{\infty} a_nb_{1,n,1})t + (\sum_{n=0}^{\infty} a_nb_{1,n,2})t^2+\dots\\
\vdots\\
(\sum_{n=0}^{\infty} a_nb_{r,n,0}) + (\sum_{n=0}^{\infty} a_nb_{r,n,1})t + (\sum_{n=0}^{\infty} a_nb_{r,n,2})t^2+ \dots\\
\end{pmatrix}^{\tr}\in \mathbb{T}^r.
\end{equation}
Now let $f=0$. Then, in each coordinate of \eqref{E:Tate module infinite sums0}, the coefficient of each power of $t$ is identically 0. This gives a sequence of infinite series so that
$\sum_{n=0}^{\infty} a_nb_{\mu,n,\nu} = 0$ for each $1\leq \mu\leq r$ and $\nu\geq 0$. 

Assume to the contrary that there exists a non-negative integer $n_0$ such that $a_{n_0}\neq 0$. We then write $n_0 = s_0r+j_0$ with $s_0\in \mathbb{Z}_{\geq 0}$ and $0\leq j_0\leq r-1$. Note, by Lemma \ref{L:t degree of phi}(i) that, as a polynomial in $t$, the leading term of the $(r-j_0)$-th coordinate $f_{r-j_0,n_0}$ of $\tilde{\varphi}(\tau^{n_0}( \mathfrak{c}_r))$ is $at^{s_0}$ for some constant $a\in \mathbb{F}_q^{\times}$. That is, $b_{r-j_0,n_0,s_0}=a$. Moreover, again by Lemma \ref{L:t degree of phi}, the coefficient of $t^{s_0}$ in the coordinates of $\tilde{\varphi}(\tau^{\ell}(\mathfrak{c}_r))$ is zero for $\ell<n_0$. Thus, we have 
\begin{equation}\label{E:series1}
\sum_{n=0}^{\infty} a_nb_{(r-j_0),n,s_0} = a_{n_0}a+\sum_{n\neq n_0} a_nb_{(r-j_0),n,s_0}=a_{n_0}a+\sum_{n> n_0} a_nb_{(r-j_0),n,s_0}=0
\end{equation}
where, we note that, the left hand side of \eqref{E:series1} is the $t^{s_0}$-coefficient of the $(r-j_0)$-th coordinate of $f$. 
Since $a_{n_0}\neq 0$ and the norm $| \cdot |$ is nonarchimedian, there must exist $n_1>n_0$  such that
\begin{equation}\label{E:boundan}
|a_{n_0}a|=|a_{n_0}|  \leq  |a_{n_1} b_{(r-j_0),n_1,s_0}|.
\end{equation}
Now let us write $n_1=s_1r+j_1$ with $s_1\in \mathbb{Z}_{\geq 0}$ and $0\leq j_1\leq r-1$. Then by Lemma \ref{L:maxnorm}, we obtain
\begin{equation}\label{E:est10}
 \log_q(|b_{(r-j_0),n_1,s_0}|)\leq \log_q(\alpha_{n_1,s_0})\leq \frac{q^{n_1}-q^{s_0 r+j_1}}{q^r-1}\leq  \frac{q^{n_1+r-1}}{q^r-1}-\frac{q^{n_0+r-1}}{q^r-1}.
\end{equation}
Here the last inequality follows from the fact that $n_1>n_0$ and $0\leq j_0,j_1\leq r-1$.
 Thus, \eqref{E:boundan} and \eqref{E:est10} yield
\[
|a_{n_0}||\theta|^{\frac{q^{n_0+r-1}}{q^r-1}} \leq |a_{n_1}||\theta|^{\frac{q^{n_1+r-1}}{q^r-1}}.
\]

We now apply our algorithm once again. More precisely, we first note, by Lemma \ref{L:t degree of phi}(i) that, as a polynomial in $t$, the leading term of the $(r-j_1)$-st coordinate $f_{r-j_1,n_1}$ of $\tilde{\varphi}(\tau^{n_1}( \mathfrak{c}_r))$ is $\tilde{a}t^{s_1}$ for some constant $\tilde{a}\in \mathbb{F}_q^{\times}$. That is, $b_{r-j_1,n_1,s_1}=\tilde{a}$. Moreover, again by Lemma \ref{L:t degree of phi}, the coefficient of $t^{s_1}$ in the coordinates of $\tilde{\varphi}(\tau^{\ell}(\mathfrak{c}_r))$ is zero for $\ell<n_1$. Thus, we have  
\begin{equation}\label{E:series2}
\sum_{n=0}^{\infty} a_nb_{(r-j_1),n,s_1} = a_{n_1}\tilde{a}+\sum_{n\neq n_1} a_nb_{(r-j_1),n,s_1}=a_{n_1}\tilde{a}+\sum_{n> n_1} a_nb_{(r-j_1),n,s_1}=0
\end{equation}
where, we note that, the left hand side of \eqref{E:series2} is the $t^{s_1}$-coefficient of the $(r-j_1)$-st coordinate of $f$. Since $a_{n_1}\neq 0$ and the norm $| \cdot |$ is nonarchimedian, there must be an $n_2>n_1$ such that
\begin{equation}\label{E:boundan1}
|a_{n_1}\tilde{a}|=|a_{n_1}|  \leq  |a_{n_2} b_{(r-j_1),n_2,s_1}|.
\end{equation}
 Now let us write $n_2=s_2r+j_2$ with $s_2\in \mathbb{Z}_{\geq 0}$ and $0\leq j_2\leq r-1$.   Then by Lemma \ref{L:maxnorm}, we obtain
\begin{equation}\label{E:est2}
 \log_q(|b_{(r-j_1),n_2,s_1}|)\leq \log_q(\alpha_{n_2,s_1})\leq \frac{q^{n_2}-q^{s_1 r+j_2}}{q^r-1}\leq  \frac{q^{n_2+r-1}}{q^r-1}-\frac{q^{n_1+r-1}}{q^r-1}.
\end{equation}
Here, again, the last inequality follows from the fact that $n_2>n_1$ and $0\leq j_1,j_2\leq r-1$. Thus, \eqref{E:boundan1} and \eqref{E:est2} yield
\[
|a_{n_1}||\theta|^{\frac{q^{n_1+r-1}}{q^r-1}} \leq |a_{n_2}||\theta|^{\frac{q^{n_2+r-1}}{q^r-1}}.
\]
Continuing in this manner, we obtain a chain of integers  $n_0<n_1<n_2<\cdots<n_w<\cdots $ and an increasing sequence $\{|a_{n_w} \theta^{\frac{q^{n_w+r-1}}{q^r-1}}|\}_{w\geq 0}$. On the other hand, since  $\sum_{n=0}^{\infty}a_n\tau^n\in \mathbb{M}$, we have
\[
\left |a_{n_w} \theta^{\frac{q^{n_w+r-1}}{q^r-1}}\right |= |a_{n_w}|(|\theta|^{\frac{q^{r-1}}{q^r-1}})^{q^{n_w}}= |a_{n_w}|\mathfrak{v}^{q^{n_w}} \to 0\]
as $w\to \infty$. However, this contradicts to the fact that $\{|a_{n_w} \theta^{\frac{q^{n_w+r-1}}{q^r-1}}|\}_{w\geq 0}$ is an increasing sequence. Hence $a_{n_0}$ must be equal to zero, finishing the proof of the proposition.
\end{proof}

As a follow up to the above discussion, here we establish that the image of $\varphi$ is contained in the subspace of entire functions.

\begin{proposition}\label{P:Image varphi entire} Let $\phi$ be a Drinfeld module as in \eqref{E:DMfixed} and let $\mathbb{E}$ be the space of entire functions of $t$, i.e. the set of all $F=\sum_{i\geq 0}a_it^i\in \mathbb{C}_{\infty}[[t]]$ so that $F$ converges for any value of $t\in \mathbb{C}_{\infty}$. 
Then each entry of an element in the image of $\varphi$ can be analytically continued to an element in $\mathbb{E}$. In particular, for any $c\in \mathbb{C}_{\infty}^{\times}$ 
\[
\varphi(\mathbb{M})\subset \Mat_{1\times r}(\mathbb{E})\subset \Mat_{1\times r}(\mathbb{T}_c).
\]
\end{proposition}

\begin{proof}
Let $g=\sum_{n\geq 0}a_n\tau^n\in \mathbb{M}$. We recall the elements $b_{\mu,n,\nu}\in \mathbb{C}_{\infty}$ from \eqref{E:coefb}. We will prove that for any $0\leq m\leq r-1$, the $(r-m)$-th entry of $\varphi(g)$
\[
\sum_{s=0}^{\infty}\left(\sum_{n=0}^{\infty} a_nb_{(r-j_1),n,s}\right)t^s\in \mathbb{T}
\]
which can be seen as a function of $t$, converges for any $c\in \mathbb{C}_{\infty}$. By the assumption on elements in $\mathbb{M}$, we have 
\[
\lim_{n\to \infty}\left(\log_q(|a_n|)+\frac{q^{n+r-1}}{q^r-1}\right)=-\infty.
\]
Therefore, there must exist $S$ such that for all $n>S$
\[
\log_q(|a_n|)+\frac{q^{n+r-1}}{q^r-1}<0.
\]

For $s>S$ and some $0\leq j_0\leq r-1$, let us investigate the $t^{s}$-th coefficient of the $(r-j_0)$-th
coordinate of $\varphi(g)$ which is given by
\[
\sum_{n=0}^{\infty} a_nb_{(r-j_0),n,s} = a_{n_0}a+\sum_{n\neq n_0} a_nb_{(r-j_0),n,s}=a_{n_0}a+\sum_{n> n_0} a_nb_{(r-j_0),n,s}
\]
where the last equality follows from the discussion just before \eqref{E:series1}. 

Note that $n_0=sr+j_0>S$. Then, by Lemma \ref{L:maxnorm} and the assumption on $a_n$'s for $n=\tilde{s}r+j>S$ with $0\leq j \leq r-1$ and $\tilde{s}\in \mathbb{Z}_{\geq 0}$, we see that 
\begin{multline}\label{E:cal2}
\log_{q}|a_nb_{r-j_0,n,s}|\leq \log_q|a_n|+\log_q |b_{r-j_0,n,s}|\leq \log_q|a_n|+\frac{q^{n}}{q^r-1}-\frac{q^{sr+j}}{q^r-1}\\\leq \log_q|a_n|+\frac{q^{n+r-1}}{q^r-1}-\frac{q^{sr+j}}{q^r-1}
< -\frac{q^{sr+j}}{q^r-1}\leq -\frac{q^{sr}}{q^r-1}.
\end{multline}
In other words,  $\log_q(|\sum_{n=0}^{\infty} a_nb_{(r-j_0),n,s}|)$ is bounded by 
\[
\mathfrak{b}_{s}=-\frac{q^{sr}}{q^r-1}.
\]
Then we see that for any $c\in \mathbb{C}_{\infty}$ and $s>S$, 
\[
\log_q\left(\left|\left(\sum_{n=0}^{\infty} a_nb_{(r-j_1),n,s}\right)c^s\right|\right)\leq -\frac{q^{sr}}{q^r-1}+s\log_q(|c|) 
\]
and hence
\[
\log_q\left(\left|\left(\sum_{n=0}^{\infty} a_nb_{(r-j_1),n,s}\right)c^s\right|\right) \to -\infty
\]
as $s\to \infty$. This shows our claim when $m=j_0$. For the case, $m\neq j_0$, we apply the same calculation as in \eqref{E:cal2} due to the fact that 
\[
\log_{q}|b_{r-m,n,s}|\leq \log_q |b_{r-j_0,n,s}|\leq \frac{q^{n}}{q^r-1}-\frac{q^{sr+j}}{q^r-1}
\]
which follows from Lemma \ref{L:t degree of phi}(i). This finishes the proof.

\end{proof}

\begin{example}\label{Ex:11}
Since the map $\delta_{1,\bz}^M$ is central to the main formulas of our paper, we give a short example showing how one computes the image of this map, at least in the case of Carlitz module (compare this calculation with the framework in \cite[\S6.5]{GP21}). Furthermore, to ease the notation, we let $\mathbb{S}_0:=t-\theta\in \mathbb{C}_{\infty}[t]$ and for $n\geq 1$, let  $\mathbb{S}_n:=(t-\theta^{q^n})\cdots (t-\theta)\in \mathbb{C}_{\infty}[t]$. By convention, we also set $\mathbb{S}_{n}=1$ if $n< 0$.  Letting $\zz\in \mathcal{D}_{C}$ and noting that the radius of convergence of $\log_C$ is $q^{q/(q-1)}$, we see that $\log_{C}\in \mathbb{M}\cap (M_C)_{\zz}$. On the other hand, we have
\begin{align*}
    \varphi(\log_C)=\left(\sum_{n=0}^{\infty}\frac{\tau^n}{L_n}\right)=\lim_{n\to \infty}\tilde{\varphi}\left(\sum_{i=0}^{n}\frac{\tau^i}{L_i}\right)=\lim_{n\to \infty}\left(1+\frac{\mathbb{S}_{0}}{L_1}+\cdots+\frac{\mathbb{S}_{n-1}}{L_n}\right)=-\tilde{\pi}\Omega\in \mathbb{T}
    \end{align*}
where the last identity follows from \eqref{E:piomegaidentity}. Since, by Proposition \ref{P:inj}, the map $\varphi$ is injective, we have $\varphi^{-1}(-\tilde{\pi}\Omega)=\log_{C}$. By Theorem \ref{T:log}, we then have
\begin{align*}
\mathcal{M}_{\zz}(-\tpi \Omega) &= z + \frac{1}{L_1}z^q + \frac{1}{L_2} z^{q^2} + \dots\\
&= \log_C(\zz),
\end{align*}
which is consistent with \cite[Cor. 5.7]{Gre22}.
\end{example}

Our goal from now on is to establish an analogous result as in Example \ref{Ex:11} for the Drinfeld module $\phi$ given in \eqref{E:DMfixed}. In order to do this, we need one more technicality to be discussed in the next subsection, which can be applied to arbitrary Anderson $t$-modules.

\subsection{Tensor construction}
Let $G=(\mathbb{G}_{a/\mathbb{C}_{\infty}}^d,\phi)$ be an Anderson $t$-module given as in Definition \ref{D:t-modules}. In this subsection, we detail a modified construction of the pairing $G(x,y)$ found in \cite{Gre22}. In the present paper, it will allow us to more easily analyze the convergence of the quantities described in \S \ref{S:Log of Drinfeld Modules}. 

For the rest of the present section, we emphasize that all the tensor products are over $\mathbb{C}_{\infty}$ unless explicitly noted otherwise and hence, to ease the notation, we avoid the subscript in our tensor product notation.

Recall the bases $\mathcal{G}$ and $\mathcal{H}$ given in Lemma \ref{L:basis}(ii).  For $x\in \C_\infty[t,\sigma]$ and $y\in \C_\infty[t,\tau]$, define
\begin{equation}\label{E:Gn tensor def}
G_n^\otimes(x,y) :=  \sum_{i=0}^n \sum_{k = 1}^{d} \sigma^{-i}(x(h_k))\otimes \tau^i(y(g_k)))\in \tilde{N}_{G}\otimes M_{G},
\end{equation}
where we regard $\tilde{N}_{G}\otimes M_{G}$ to be a $\mathbb{C}_{\infty}[t]\otimes \mathbb{C}_{\infty}[t]$-module. We note that if we apply the map $\delta_0^N$ to the first coordinate of each simple tensor in \eqref{E:Gn tensor def}, then the resulting sum is in $\C_\infty^d \otimes M \isom M^d$ and we recover $G_n(x,y)$ of \cite[Def. 5.1]{Gre22}. In fact, since $\delta_0^N$ is $\C_\infty$-linear, this is equivalent to applying $\delta_0^N\otimes 1$ to the whole sum \eqref{E:Gn tensor def}.

The pairing $G_n^\otimes(x,y)$ has many similar properties to $G_n(x,y)$ (detailed in \cite[Prop. 5.4]{Gre22}). We briefly discuss the properties of  $G_n^\otimes(x,y)$ here. For convenience we recall Definition \cite[5.3]{Gre22}:

\begin{definition}\label{D:Theta sigma}
For each $j\in \{0,\dots, \ell-1\}$, we  define
\[
\Theta_{\phi,\sigma^{\ell-j}} :=A_{j+1}\tau+\cdots+A_{\ell}\tau^{\ell-j}\in \Mat_{d}(\mathbb{C}_{\infty}[\tau]),
\]
where the $A_i$ are the coefficients of the $t$-module as in Definition \ref{D:t-modules}
\end{definition}

\begin{proposition}\label{P:Gntensor linearity properties}
Let $x\in \C_\infty[t,\sigma]$ and $y\in \C_\infty[t,\tau]$.
\begin{enumerate}
\item For any $c\in \C_\infty$ we have
\[G_n^\otimes(cx,y) = G_n^\otimes(x,cy).\]
\item We have
\[G_n^\otimes(x,\tau y) - G_n^\otimes(\sigma x,y) = \sum_{k = 1}^{d}\sigma^{-n}(xh_k)\otimes \tau^{n+1}(yg_k) - \sigma(xh_k)\otimes yg_k,\]
and more generally for $m<n$ and $c\in \C_\infty$
\begin{multline*}
G_n^\otimes(x,c\tau^m y) - G_n^\otimes(c\twistk{-m}\sigma^m x,y) = \sum_{k = 1}^{d}\sum_{u=0}^{m-1}\sigma^{u-n}(xh_k)\otimes \tau^{n}(c\twistk{-u}\tau^{m-u} yg_k)\\ - c\twistk{u-m}\sigma^{u-m}(xh_k)\otimes \tau^u(yg_k).
\end{multline*}
\item Let $\mathfrak{e}_k\in \Mat_{d\times 1}(\mathbb{F}_q)$ be the $k$-th standard basis. We have
\[G_n^\otimes(1,t) - G_n^\otimes(t,1) = \sum_{k = 1}^{d}\sum_{m=0}^{\ell-1}\sigma^{m-n}(h_k)  \otimes \tau^n\left(\mathfrak{e}_k^{\tr}\Theta_{\phi,\tau^{\ell-m}} \mathcal{G} \right ) - \mathfrak{e}_k^{\tr}\Theta_{\phi,\sigma^{\ell-m}}^*\mathcal{H}  \otimes \tau^m g_k. \]
\end{enumerate}
\end{proposition}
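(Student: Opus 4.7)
The plan is to prove all three parts by direct manipulation of the defining sum \eqref{E:Gn tensor def}, following the template of \cite[Prop. 5.4]{Gre22} but working entirely inside the tensor product $\tilde{N}_G\otimes M_G$ rather than after projecting via $\delta_0^N$. The underlying mechanisms are: (i) the $\C_\infty$-bilinearity of the tensor, which lets constants move freely between the two factors; (ii) the identities $\sigma^{-i}(cz)=c^{(i)}\sigma^{-i}(z)$ and $\tau^i(cw)=c^{(i)}\tau^i(w)$ for $c\in\C_\infty$, which encode how $\sigma$ and $\tau$ commute with scalars (the first coming from \eqref{E:sigmainv}, the second from $\tau c = c^{(1)}\tau$); and (iii) index-shift telescoping across $i\in\{0,\ldots,n\}$.

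For part (1), I would apply (i) and (ii) to rewrite $\sigma^{-i}((cx)h_k)\otimes\tau^i(yg_k) = c^{(i)}\sigma^{-i}(xh_k)\otimes\tau^i(yg_k)=\sigma^{-i}(xh_k)\otimes c^{(i)}\tau^i(yg_k)=\sigma^{-i}(xh_k)\otimes\tau^i((cy)g_k)$, then sum over $i$ and $k$. For part (2), I would expand $G_n^\otimes(x,c\tau^m y)$ using $\tau^i(c\tau^m y g_k)=c^{(i)}\tau^{i+m}(yg_k)$ and reindex by $j=i+m$, and similarly expand $G_n^\otimes(c^{(-m)}\sigma^m x,y)$ using $\sigma^{-i}((c^{(-m)}\sigma^m x)h_k)=c^{(i-m)}\sigma^{m-i}(xh_k)$. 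The two resulting sums have the same $(k,j)$-summand $c^{(j-m)}\sigma^{m-j}(xh_k)\otimes\tau^j(yg_k)$ but taken over $j\in\{m,\ldots,n+m\}$ and $j\in\{0,\ldots,n\}$ respectively. Subtracting and cancelling the common range $j\in\{m,\ldots,n\}$ leaves precisely the two boundary contributions, which become the stated formula after relabeling by $\ell\in\{0,\ldots,m-1\}$. The first displayed equation in (2) is just the case $m=1$, $c=1$ of this calculation.

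For part (3), I would invoke Lemma \ref{L:basis}(ii) to write $t g_k = \sum_j\mathfrak{V}_{kj}g_j$ and $t h_k=\sum_j\mathfrak{V}^*_{kj}h_j$ with $\mathfrak{V}=d[\theta]+\sum_{\mu=1}^rA_\mu\tau^\mu$, so that $\mathfrak{V}^*=(d[\theta])^{\tr}+\sum_{\mu=1}^r(A_\mu^{(-\mu)})^{\tr}\sigma^\mu$. The $d[\theta]$-contribution to $G_n^\otimes(1,t)$ is $\sum_{i,k,j}(d[\theta])_{kj}^{(i)}\sigma^{-i}(h_k)\otimes\tau^i g_j$, which after swapping the dummy indices $k\leftrightarrow j$ exactly matches the $d[\theta]$-contribution to $G_n^\otimes(t,1)$; hence these pieces cancel. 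For each $\mu\geq 1$, the $A_\mu\tau^\mu$ piece in $G_n^\otimes(1,t)$ and the $(A_\mu^{(-\mu)})^{\tr}\sigma^\mu$ piece in $G_n^\otimes(t,1)$ combine just as in the telescoping of part (2), yielding two boundary terms per $\mu$. Summing over $\mu$ and regrouping by the exponent of $\tau$ (respectively $\sigma$) assembles these into the matrix expressions $\Theta_{\phi,\tau^{r-\ell}}$ and $\Theta_{\phi,\sigma^{r-\ell}}^*$ from \eqref{D:Theta tau} and Definition \ref{D:Theta sigma}.

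The main bookkeeping obstacle will be the index gymnastics of part (3): one must simultaneously track the matrix indices $(k,j)$, the Frobenius twists $(\cdot)^{(i)}$, and the exponents of $\sigma$ and $\tau$, and recognize that the transpose built into the $(\cdot)^*$ operation is precisely what makes the $d[\theta]$-terms cancel and what repackages the telescoping boundary pieces into the matrices $\Theta_{\phi,\tau^{r-\ell}}$ and $\Theta_{\phi,\sigma^{r-\ell}}^*$. The analogous calculation in \cite[Prop. 5.4]{Gre22} provides a blueprint, so the new work here is essentially checking that no information is lost by working at the level of $\tilde{N}_G\otimes M_G$ before applying $\delta_0^N\otimes 1$ to the first tensor factor.
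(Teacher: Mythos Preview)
Your proposal is correct and follows essentially the same approach as the paper's own proof, which is quite terse: part (1) is labeled a straightforward calculation, part (2) is attributed to telescoping (with the general case obtained by iterating the $m=1$ case together with part (1) and the rule $a\tau=\tau a^{(-1)}$), and part (3) is obtained by expanding the $t$-action via $\phi_\theta$ and $\phi_\theta^*$ and then applying (1) and (2) termwise. Your only deviation is that you establish the general formula in (2) by a single direct reindexing rather than by iterating the $m=1$ case, which is an equally valid and arguably cleaner realization of the same telescoping idea; note, however, that your computation yields $\sigma^{m-\ell}$ in the low-index boundary term (consistent with the $\sigma(xh_k)$ in the first displayed equation when $m=1$), so be alert to what appears to be a sign typo $\sigma^{\ell-m}$ in the stated general formula.
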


\begin{proof}
Part (1) is a straightforward calculation. The first part of (2) follows because the two terms being subtracted create a telescoping series, which leaves the highest and lowest degree (in $\tau$) terms after cancellation. The second part follows by using part (1), recalling that $a\tau = \tau a\twistinv$, and then repeatedly applying the first part of (2). Part (3) follows by recalling from \S \ref{SS:t-motive for Drinfeld} and \S \ref{SS:Dual t-motives for Drinfeld} that $t$ acts as $\phi_{\theta}$ on $M_G$ and as $\phi_{\theta}^*$ on $N_G$, then by applying parts (1) and (2) to the individual terms of $\phi_\theta$ and $\phi_\theta^*$.
\end{proof}

In what follows, we also obtain a factorization of $G_n^\otimes(1,1)$ similarly to \cite[Thm. 5.4(3)]{Gre22}. Let us denote
\[G_n^\otimes := G_n^\otimes(1,1).\]
\begin{proposition}\label{P:Gntensor factorization}
 We have the following factorization of $G_n^\otimes$:
\[((1\otimes t) - (t\otimes 1))G_n^\otimes = \sum_{k = 1}^{d}\sum_{m=0}^{\ell-1}\sigma^{m-n}(h_k)  \otimes \tau^n\left(\mathfrak{e}_k^{\tr}\Theta_{\phi,\tau^{\ell-m}} \mathcal{G} \right ) - \mathfrak{e}_k^{\tr}\Theta_{\phi,\sigma^{\ell-m}}^*\mathcal{H}  \otimes \tau^m g_k.\]
\end{proposition}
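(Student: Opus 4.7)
The plan is to reduce the claim directly to Proposition~\ref{P:Gntensor linearity properties}(3) by identifying the left-hand side with $G_n^\otimes(1,t) - G_n^\otimes(t,1)$.

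Since the tensor product in \eqref{E:Gn tensor def} is taken over $\C_\infty$, the operator $(1\otimes t)$ acts only on the right-hand factor via the $t$-module structure on $M_G$ (given by $\phi_\theta$), while $(t\otimes 1)$ acts only on the left-hand factor via the $t$-module structure on $\tilde N_G$ (coming from $\phi_\theta^*$). The essential observation is that $t$ commutes with $\tau$ in $\C_\infty[t,\tau]$, since the Frobenius twist $\tau c = c^q \tau$ fixes $t$; likewise $t$ commutes with $\sigma$ in $\C_\infty[t,\sigma]$, hence also with $\sigma^{-1}$ as extended to $\tilde N_G$ via \eqref{E:sigmainv}. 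Consequently, one may pull the $t$-action through each twist in the sum \eqref{E:Gn tensor def}:
\begin{align*}
(1\otimes t)\,G_n^\otimes &= \sum_{i=0}^{n}\sum_{k=1}^{d} \sigma^{-i}(h_k)\otimes \tau^i(t\cdot g_k) = G_n^\otimes(1,t), \\
(t\otimes 1)\,G_n^\otimes &= \sum_{i=0}^{n}\sum_{k=1}^{d} \sigma^{-i}(t\cdot h_k)\otimes \tau^i(g_k) = G_n^\otimes(t,1).
\end{align*}
Subtracting these two identities and invoking Proposition~\ref{P:Gntensor linearity properties}(3) produces exactly the claimed factorization, up to the purely notational identification discussed below.

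The remaining bookkeeping is to match the shorthand $\Theta_{\phi,\tau^{r-\ell}}\,g_k$ appearing in Proposition~\ref{P:Gntensor linearity properties}(3) with the expression $\mathfrak{e}_k^{\tr}\Theta_{\phi,\tau^{r-\ell}}\mathcal{G}$ appearing in the current statement. Recalling from Definition~\ref{D:Theta sigma} and \eqref{D:Theta tau} that $\Theta_{\phi,\tau^{r-\ell}}$ is a $d\times d$ matrix over $\C_\infty[\tau]$, its action on the $k$-th basis element, arising as the $\tau^{>0}$ portion of the matrix action $\phi_\theta \mathcal{G}$, is by definition the $k$-th row $\mathfrak{e}_k^{\tr}\Theta_{\phi,\tau^{r-\ell}}\mathcal{G}$; the same identification holds on the dual side, giving $\Theta_{\phi,\sigma^{r-\ell}}^* h_k = \mathfrak{e}_k^{\tr}\Theta_{\phi,\sigma^{r-\ell}}^*\mathcal{H}$. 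There is no new obstacle here: the only genuine telescoping argument has already been absorbed into the proof of Proposition~\ref{P:Gntensor linearity properties}(2), and the rest of the proof is a clean reinterpretation of part (3) of that proposition in the tensor-product setting.
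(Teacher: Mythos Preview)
Your proof is correct and follows essentially the same approach as the paper: the paper's proof simply notes that $G_n^\otimes(1,t) = (1\otimes t)G_n^\otimes(1,1)$ and $G_n^\otimes(t,1) = (t\otimes 1)G_n^\otimes(1,1)$, then invokes Proposition~\ref{P:Gntensor linearity properties}(3). You have merely spelled out the commutativity of $t$ with $\tau$ and $\sigma$ that justifies these identities and unwound the notational identification, which the paper leaves implicit.
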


\begin{proof}
This proposition follows from Proposition \ref{P:Gntensor linearity properties}(3) after noting that $G_n^\otimes(1,t) = (1\otimes t)G_n^\otimes(1,1)$, and that $G_n^\otimes(t,1) = (t\otimes 1)G_n^\otimes(1,1)$.
\end{proof}

\section{Logarithms of Drinfeld modules}\label{S:Log of Drinfeld Modules}
Our goal in this section is to interpret the logarithms of Drinfeld modules in terms of formulas investigated in \cite{Gre22}. 

We continue to fix a Drinfeld module $\phi$ given by
\begin{equation}\label{E:DrinfeldSec3}
\phi_{\theta} = \theta + k_1 \tau + \dots + k_r\tau^r
\end{equation}
so that $\inorm{k_i}\leq 1$ for each $1\leq i \leq r-1$ and $k_r\in \mathbb{F}_q^{\times}$. For any positive integer $n$, consider \textit{the set $\phi[\theta^n]$ of $\theta^n$-torsion points} which consists of elements $z\in \mathbb{C}_{\infty}$ such that $\phi_{\theta^n}(z)=0$. Observe, by the Newton polygon method, that each non-zero element in $\phi[\theta]$ has norm $q^{1/(q^r-1)}$. Since, by Remark \ref{R:roclog}, $\log_{\phi}$ converges at any element $z\in \mathbb{C}_{\infty}$ satisfying $|z|<q^{q^r/(q^r-1)}$,  for each $1\leq i \leq r$, we may consider $\lambda_i:=\theta\log_{\phi}(\xi_i)\in \Ker(\exp_{\phi})$ where $\{\xi_1,\dots,\xi_r\}$ is a fixed $\mathbb{F}_q$-basis for $\phi[\theta]$. We call each $\lambda_i$  \textit{a fundamental period of $\phi$}. Furthermore, the set $\{\lambda_1,\dots,\lambda_r\}$ forms an $A$-basis for the period lattice $\Lambda_\phi$.

\subsection{The product formula for $\Upsilon$}\label{S:product}
Consider
\begin{equation}\label{D:B def}
B:=\begin{pmatrix}
\xi_1 & \xi_2 & \dots & \xi_r\\
\xi_1^q & \xi_2^q & \dots & \xi_r^q\\
\vdots &\vdots &  & \vdots \\
\xi_1^{q^{r-1}} &\xi_{2}^{q^{r-1}}& \dots & \xi_r^{q^{r-1}}
\end{pmatrix}\in \Mat_{r\times r}(\mathbb{C}_{\infty}).
\end{equation} 
Since $\xi_1,\dots,\xi_r$ are $\mathbb{F}_q$-linearly independent and $B$ is a Moore matrix, the inverse of $B$ exists. 

In what follows, we define certain quantities $\bbeta_n(t) \in K(t)$ from \cite[(6.4)]{EGP14} and refer the reader to \cite[\S5,6]{EGP14} for further details. By \textit{a partition of a set $S$}, we mean a disjoint union of sets whose union is equal to $S$. For any set $S\subset \mathbb{Z}$ and an integer $j$, we let $S+j:=\{s+j\ \ |s\in S\}$. For any $r\in \mathbb{Z}_{\geq 1}$, set $P_r(0):=\{(\emptyset,\dots,\emptyset)\}$ and for $n\in \mathbb{Z}_{\geq 1}$, we define $P_r(n)$ to be the collection of sets $(S_1,\dots,S_r)$ so that each $S_i$ is a subset of $\{0,1,\dots,n-1\}$ and the tuple $\{S_i+j\ \ | 1\leq i \leq r, \ \ 0\leq j \leq i-1\}$ forms a partition for $\{0,1,\dots,n-1\}$. We finally define 
\[
\bbeta_n(t) := \sum_{(S_1,\dots,S_r)\in P_r(n)} \prod_{i=1}^r \prod_{j\in S_i} \frac{k_i^{q^j}}{t-\theta^{q^{i+j}}}.
\]
We comment that, by \cite[(6.5)]{EGP13}, if we set 
\[
\log_{\phi}:=\sum_{n\geq 0}\beta_n\tau^n
\]
 then we have $\bbeta_n(\theta) = \beta_n$. 

%Further, we have by \cite[Cor. 6.9]{EGP14} that $\lVert \bbeta \rVert_\theta = |\beta_n|$. 

Let $\{\lambda_1,\dots,\lambda_r\}$ be the set of fundamental periods constructed as above by using $\{\xi_1,\dots,\xi_r\}$, forming an $A$-basis for $\Ker(\exp_{\phi})$. We define accordingly $\Upsilon \in \GL_r(\mathbb{T})$ given in \eqref{E:Upsilon}. We also set $F:=B^{-1}\Theta^{-1}B^{(1)}\in \GL_r(\mathbb{T})$ and $\Pi_n:=B\prod_{i=0}^{n}F^{(i)}\in \GL_r(\mathbb{T})$.

Khaochim and Papanikolas obtained a product formula for $\Upsilon$ as well as a certain expression for the entries of $\Pi_n$ in terms of $\bbeta_n$ which will later be essential for us to prove our main results.
\begin{theorem}[{Khaochim and Papanikolas, \cite[Prop. 4.3, Thm. 4.4]{KP23}}] \label{T:product} The following identities hold.
\begin{itemize}
\item[(i)] 
\[
(\Pi_{n})_{ij}=\left(\xi_{j}-\frac{t}{t-\theta}\sum_{\mu=0}^{n-(i-1)}\bbeta_{\mu}(t)\xi_{j}^{q^\mu}\right)^{(i-1)}.
\]
In particular, $\lim_{n\to \infty} (\Pi_{n})_{ij}^{(1)}$ exists with respect to the norm $\lVert \cdot\rVert_{\theta}$ on $\mathbb{T}_{\theta}$.
\item[(ii)] 
\[
\Upsilon=\lim_{n\to \infty} \Pi_n=B\prod_{n=0}^{\infty}F^{(n)}.
\]
\end{itemize}
\end{theorem}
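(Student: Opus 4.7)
The plan is a direct induction on $n$ for part (i), using the recursive structure $\Pi_n = \Pi_{n-1}F^{(n)}$ together with the recursion satisfied by the rational functions $\bbeta_\mu(t)$. Part (ii) then follows by identifying the limits of the entries of $\Pi_n$ with the Anderson generating functions $f_j = s_\phi(\lambda_j;t)$, once convergence in $\TT_\theta$ is secured.

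For the base case $n=0$ of part (i), I would compute $\Pi_0 = BF = B \cdot B^{-1}\Theta^{-1}B^{(1)} = \Theta^{-1}B^{(1)}$ directly. Using the companion-like shape of $\Theta$, the rows $i \geq 2$ of $\Theta^{-1}B^{(1)}$ just re-index the Moore rows of $B^{(1)}$ as claimed (noting the empty sum when $n - (i-1) < 0$), while the first row comes from the bottom-row relation of $\Theta^{-1}$ and matches the asserted formula after recalling $\bbeta_0(t) \equiv 1$. For the inductive step, I would examine how right-multiplication by $F^{(n)} = (B^{-1})^{(n)}(\Theta^{-1})^{(n)}B^{(n+1)}$ acts on the explicit formula for $(\Pi_{n-1})_{ij}$. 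The key algebraic ingredient is the recursion
\[
(t-\theta^{q^\mu})\bbeta_\mu(t) = \sum_{j=1}^{\min(r,\mu)} k_j^{q^{\mu-j}}\,\bbeta_{\mu-j}(t), \qquad \bbeta_0(t) \equiv 1,
\]
satisfied by the $\bbeta_\mu(t)$ from \cite[\S 6]{EGP14}, which precisely mirrors how multiplication by $(\Theta^{-1})^{(n)}$ generates a new term $\bbeta_{n+1}(t)\xi_j^{q^{n+1}}$ in the sum. This identifies the $(i,j)$-entry of $\Pi_{n-1}F^{(n)}$ with the formula claimed for $(\Pi_n)_{ij}$.

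For part (ii), combining the formula in (i) with the infinite series expansion
\[
f_j \;=\; \xi_j - \frac{t}{t-\theta}\sum_{\mu=0}^\infty \bbeta_\mu(t)\,\xi_j^{q^\mu} \qquad \text{in } \TT_\theta,
\]
immediately gives $\lim_{n\to\infty}(\Pi_n)_{ij} = f_j^{(i-1)} = \Upsilon_{ij}$, and hence $\Upsilon = \lim \Pi_n = B\prod_{n\geq 0} F^{(n)}$. To verify the displayed series identity, I would check that its right-hand side satisfies Pellarin's functional equation (Proposition \ref{P:AGF}) by inserting the recursion for $\bbeta_\mu(t)$, and then confirm that it matches $f_j$ by comparing behavior at $t=\theta$: both sides acquire a simple pole there with residue $-\lambda_j$ (on the series side, because $\sum_\mu \bbeta_\mu(\theta)\xi_j^{q^\mu} = \sum_\mu \beta_\mu \xi_j^{q^\mu} = \log_\phi(\xi_j) = \lambda_j/\theta$ using $n_\phi = 1$).

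The main technical obstacle is the convergence analysis in $\TT_\theta$. The torsion points have norm $|\xi_j| = q^{1/(q^r-1)} > 1$, so $|\xi_j^{q^\mu}|$ grows doubly-exponentially in $\mu$, and we must show $\|\bbeta_\mu(t)\,\xi_j^{q^\mu}\|_\theta \to 0$. Here the hypotheses $|k_i|\leq 1$ and $k_r \in \F_q^\times$ are essential: each factor $k_i^{q^j}/(t-\theta^{q^{i+j}})$ in the shadowed-partition expansion of $\bbeta_\mu(t)$ has $\|\cdot\|_\theta$-norm at most $q^{-q^{i+j}}$, and a careful bookkeeping over shadowed partitions (as in \cite[\S 5-6]{EGP14}) shows these savings dominate the growth of $\xi_j^{q^\mu}$. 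The twist in the statement $\lim (\Pi_n)_{ij}^{(1)}$ then merely reflects that the entries of $\Upsilon^{(1)}$ (equivalently $\Theta\Upsilon$) lie more naturally in $\TT_\theta$ than the entries of $\Upsilon$ itself.
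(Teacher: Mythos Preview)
The paper does not give its own proof of this theorem: it is quoted verbatim as a result of Khaochim and Papanikolas \cite[Prop.~4.3, Thm.~4.4]{KP23} and used as a black box thereafter. So there is no proof in the paper to compare your proposal against.

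That said, your outline is broadly the right shape and is in fact close to how the original argument in \cite{KP23} runs: one establishes the explicit entrywise formula for $\Pi_n$ by induction via $\Pi_n = \Pi_{n-1}F^{(n)}$ and the $\bbeta_\mu$-recursion from \cite{EGP14}, and then identifies the limit with the Anderson generating functions. One caution: you invoke the hypotheses $|k_i|\le 1$ and $k_r\in\F_q^\times$ as ``essential'' for the $\TT_\theta$-convergence, but Theorem~\ref{T:product} as cited from \cite{KP23} is stated and proved there without any such restriction on $\phi$ (indeed the paper later applies it in \S\ref{S:Log for D otimes C} to an arbitrary Drinfeld module). The convergence in $\TT_\theta$ comes from the general estimate on $\|\bbeta_\mu(t)\|_\theta$ in \cite{KP23}, not from the special normalization assumed in \S\ref{S:Log of Drinfeld Modules}. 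So your convergence argument would need to be reworked to avoid relying on those extra hypotheses if you want to recover the theorem in the generality in which it is used.
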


%\begin{remark}\label{R:B} In a more general case, namely, when the coefficients $k_1,\dots,k_r$ of $\phi_{\theta}$ are chosen to be arbitrary elements in $\mathbb{C}_{\infty}$, Khaochim and Papanikolas constructed the matrix $B$ to be an element in $\Mat_{r\times r}(\mathbb{T})$ (see \cite[(3.6)]{KP23}). However, since our conditions on our Drinfeld module $\phi$ given in \eqref{D:B def} implies that $n_{\phi}=1$, by \cite[Prop. 3.17, Def. 3.21]{KP23}, $B$ becomes a matrix defined over $\mathbb{C}_{\infty}.$
%\end{remark}

Recall the matrices $\Theta$, $\Phi$ and $V$ from \S \ref{S: Prelim and Background}. For each $n\geq 1$, let us set 
\[
\mathcal{P}_n:=((\Phi^{\tr})^{-1})^{(1)}\cdots ((\Phi^{\tr})^{-1})^{(n)}\in \GL_r(\mathbb{T}).
\] 
We further define 
\begin{equation}\label{D:M def}
\mathfrak{U}:=V^{\tr}B^{(1)}.
\end{equation}
By \eqref{E:Phieq}, we have 
\begin{equation}\label{E:taufirst}
\begin{split}
&\mathcal{P}_{n+1}^{(-1)}\\
&=(\Phi^{\tr})^{-1}((\Phi^{\tr})^{-1})^{(1)}\cdots ((\Phi^{\tr})^{-1})^{(n)}\\
&=(V^{(-1)})^{\tr}\Theta^{-1}(\Theta^{-1})^{(1)}\cdots (\Theta^{-1})^{(n)}((V^{-1})^{\tr})^{(n)}\\
&=(V^{(-1)})^{\tr}B(B^{-1}\Theta^{-1}B^{(1)})(B^{-1}\Theta^{-1}B^{(1)})^{(1)}\cdots (B^{-1}\Theta^{-1}B^{(1)})^{(n)}(B^{-1})^{(n+1)}((V^{-1})^{\tr})^{(n)}\\
&=(V^{(-1)})^{\tr}\Pi_n (\mathfrak{U}^{-1})^{(n)}.
\end{split}
\end{equation}
Thus, by \eqref{E:taufirst}, we obtain 
\begin{equation}\label{E:sigmaeq}
\mathcal{P}_n=(\mathcal{P}_{n}^{(-1)})^{(1)}=((\Phi^{\tr})^{-1}((\Phi^{\tr})^{-1})^{(1)}\cdots ((\Phi^{\tr})^{-1})^{(n-1)})^{(1)}=V^{\tr} \Pi_{n-1}^{(1)} (\mathfrak{U}^{-1})^{(n)}.
\end{equation}

For each $n\geq 1$, we further set $\Psi_{n}:=V^{-1}((\Pi_n^{(1)})^{\tr})^{-1}$. Recall the invertible matrix $\Psi$ from \S \ref{SS:Dual t-motives for Drinfeld} and observe, by Theorem \ref{T:product}, that 
\begin{equation}\label{E:rigiddual}
\Psi=\lim_{n\to \infty}\Psi_{n}=V^{-1}((\Upsilon^{(1)})^{\tr})^{-1}.
\end{equation}
By taking the inverse of very left and right hand side of \eqref{E:taufirst},  we have
\begin{equation}\label{E:tau}
\mathcal{S}_n=(\Phi^{\tr})^{(n)}(\Phi^{\tr})^{(n-1)}\cdots (\Phi^{\tr})=\mathfrak{U}^{(n)}\Pi_n^{-1}((V^{(-1)})^{-1})^{\tr}=\mathfrak{U}^{(n)}(\Psi_{n}^{(-1)})^{\tr}.
\end{equation}
Thus, for any $\tilde{n}\in \tilde{N}_{\phi}$ ($\tilde{m}\in M_{\phi}$ respectively) given by $\tilde{n}=\sum_{i=1}^ra_i \mathfrak{d}^{\phi}_i$ ($\tilde{m}=\sum_{i=1}^rb_i \mathfrak{c}^{\phi}_i$ respectively), using \eqref{E:tauaction}, \eqref{E:tauact} and \eqref{E:sigmainv}, we have 
\begin{equation}\label{E:sigmatau}
\sigma^{-n}(\tilde{n})=\mathcal{P}_n\begin{bmatrix}
a_1\\
\vdots\\
a_r
\end{bmatrix}^{(n)} \text{ and } \ \tau^{n}(\tilde{m})=[b_1,\dots,b_r]^{(n)} \mathcal{S}_{n-1}.
\end{equation}

\subsection{Tensor construction for Drinfeld modules}\label{SS:Tensor for Drinfeld modules} For the remaining of this section, all the tensor products are over $\mathbb{C}_{\infty}$ unless otherwise explicitly stated and hence, to ease the notation, we again avoid the subscript in our tensor product notation.

We use the bases described in \S \ref{SS:t-motive for Drinfeld} and \S \ref{SS:Dual t-motives for Drinfeld}. Let $G=(\mathbb{G}_{a/\mathbb{C}_{\infty}},\phi)$ where $\phi$ is as given in \eqref{E:DrinfeldSec3}. In this case, if we set
\begin{equation}\label{E:Gn tensor def Drinfeld module}
G_n^\otimes =  \sum_{i=0}^n  \sigma^{-i}(\mathfrak{d}^{\phi}_1)\otimes \tau^i(m_1)= \sum_{i=0}^n  \sigma^{-i}(1)\otimes \tau^i(1)\in \tilde{N}_{\phi}\otimes M_{\phi}
\end{equation}
then Proposition \ref{P:Gntensor factorization} reduces to
\begin{equation}\label{E:Gn tensor factorization Drinfeld}
\begin{split}
((1\otimes t) - (t\otimes 1))G_n^\otimes &=  \sum_{\ell=0}^{r-1}\sigma^{\ell-n}(\mathfrak{d}^{\phi}_1) \otimes \tau^n\left(\Theta_{\phi,\tau^{r-\ell}}\cdot m_1\right) - \Theta_{\phi,\sigma^{r-\ell}}^*(\mathfrak{d}^{\phi}_1)  \otimes \tau^\ell(m_1) \\
&= \sigma^{r-1-n}(\mathfrak{d}_1^{\phi})\otimes \tau^{n}(k_r \tau )+ \sum_{\ell=0}^{r-2}\sigma^{\ell-n}(\mathfrak{d}^{\phi}_1)  \otimes \tau^{n+1}(\mathfrak{c}^{\phi}_{\ell+1})\\
&\ \ \ \ \ - \sum_{\ell=0}^{r-1}\Theta_{\phi,\sigma^{r-\ell}}^*(\mathfrak{d}^{\phi}_1)  \otimes \tau^\ell(m_1) \\
&=\sum_{\ell=0}^{r-1}\sigma^{\ell-n}(\mathfrak{d}^{\phi}_1)  \otimes \tau^{n+1}(\mathfrak{c}^{\phi}_{\ell+1}) - \sum_{\ell=0}^{r-1}\Theta_{\phi,\sigma^{r-\ell}}^*(\mathfrak{d}^{\phi}_1)  \otimes \tau^\ell(m_1).
\end{split}
\end{equation}
 Going forward we will denote $\gamma := \sum_{\ell=0}^{r-1}\Theta_{\phi,\sigma^{r-\ell}}^*(\mathfrak{d}^{\phi}_1)  \otimes \tau^\ell(m_1)$. This term has no dependence on $n$ and thus makes no contribution towards the convergence of the left hand side, thus we will minimize the notation of these terms throughout the following discussion. On other hand, by \cite[Cor. 4.5]{Gre22}, for $n\geq 0$, we have $\delta_{0}^{N_{\phi}}(\sigma^{-n}(\mathfrak{d}_1^{\phi}))=\beta_n$. Then Proposition \ref{P:deltazeromap} yields the following useful lemma.
 \begin{lemma}\label{L:logcoef} For $n\geq 0$, in $M_{\phi}$, we have
 \[
 (t-\theta)\sum_{i=0}^n\beta_i\tau^i(m_1)=\sum_{\ell=0}^{r-1}\beta_{n-\ell}\tau^{n+1}(\mathfrak{c}^{\phi}_{\ell+1}).
 \]
 \end{lemma}

We now wish to move towards viewing these identities as living in rings of matrices over Tate algebras. To this end, as discussed in \S\ref{SS:varphi map}, we identify $M_\phi \isom \Mat_{1\times r}(\C_\infty[t])$ and identify $N_\phi \isom \Mat_{r\times 1}(\C_\infty[t])$ using the bases described above. Let  $\mathfrak{e}_{\ell}\in \Mat_{r\times 1}(\mathbb{F}_q)$ be the $\ell$-th standard basis. Applying the definition of the $\tau-$ and $\sigma-$ action on these bases detailed in \eqref{E:sigmatau}, when $n\geq r-1$, formula \eqref{E:Gn tensor factorization Drinfeld} becomes
\begin{equation}\label{E:Gnexpression}
((1\otimes t) - (t\otimes 1))G_n^\otimes = \sum_{\ell=0}^{r-1}\mathcal{P}_{n-\ell}\mathfrak{e}_1\otimes \mathfrak{e}_{\ell+1}^{\tr}\mathcal{S}_{n} - \gamma.
\end{equation}
A short calculation shows that both of these (finite) sums are in $\TT_\theta^r \otimes \TT_\theta^r$.

\begin{remark}
We note that we write vectors to the left of the tensor as a column and vectors to the right as a row in order to simplify notation in what comes next, namely, so that we can multiply by $r\times r$ matrices on the left and on the right of such a simple tensor and it is clear what that means. To avoid cumbersome notation, we will denote such elements as living in $\TT_\theta^r \otimes \TT_\theta^r$ rather than $\TT_\theta^r \otimes \Mat_{1\times r}(\TT_\theta)$.
\end{remark}

Our immediate goal is to prove that the right hand side of \eqref{E:Gnexpression} converges in some ring of Tate algebras as $n\to \infty$. 

\begin{definition}
Let $c\in \mathbb{C}_{\infty}^{\times}$. Recall the norm $\lVert \cdot\rVert_c$ on $\TT_c^r$ from \S \ref{SS:Anderson Generating Functions}. We extend this norm to simple tensors $a\otimes b \in \TT_c^r \otimes \TT_c^r$ by setting 
\[\lVert a\otimes b\rVert_c = \lVert a\rVert_c \cdot \lVert b\rVert_c,\]
then extending it to all $\TT_c^r \otimes \TT_c^r$ by taking the supremum over all sums involving simple tensors. It follows trivially from the definition that this is in fact a non-archimedean (or ultrametric) norm on $\TT_c^r \otimes \TT_c^r$. In fact, this is an example of a cross norm on the tensor product of two Banach spaces (see \cite[\S6]{Rya02} for more details on cross norms). We then form the completion of $\TT_c^r \otimes \TT_c^r$ under this norm, and denote the resulting space $\widehat{\TT_c^r \otimes \TT_c^r}$.
\end{definition}

\begin{lemma}\label{L:Totimes convergence}
 For $a_n,b_n\in \TT_c^r$, the sum of simple tensors
\[\sum_{n=0}^\infty a_n\otimes b_n\]
converges in $\widehat{\TT_c^r \otimes \TT_c^r}$ if and only if  $\lVert a_n\otimes b_n\rVert_c\to 0$ as $n\to \infty$.
\end{lemma}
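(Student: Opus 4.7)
The plan is to observe that this is simply the standard convergence criterion for series in a complete non-archimedean normed space, so essentially no new content is needed beyond verifying that the setup we have meets those hypotheses. Specifically, $\widehat{\TT_c^r \otimes \TT_c^r}$ is complete by construction (being defined as the completion of $\TT_c^r \otimes \TT_c^r$ under $\lVert \cdot \rVert_c$), and the norm on it is non-archimedean since $\lVert \cdot \rVert_c$ on $\TT_c^r$ is non-archimedean and the cross-norm extension preserves the ultrametric inequality. Hence the general principle ``$\sum x_n$ converges in a complete ultrametric space iff $x_n \to 0$'' applies directly.

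For the forward direction, I would let $S_N := \sum_{n=0}^N a_n\otimes b_n$ and assume $S_N$ converges in $\widehat{\TT_c^r \otimes \TT_c^r}$. Then $(S_N)$ is Cauchy, so in particular $\lVert S_N - S_{N-1}\rVert_c = \lVert a_N\otimes b_N\rVert_c \to 0$ as $N\to\infty$.

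For the converse, assume $\lVert a_n\otimes b_n\rVert_c \to 0$. Given $\varepsilon > 0$, choose $N_0$ so that $\lVert a_n\otimes b_n\rVert_c < \varepsilon$ for all $n\geq N_0$. Then for any $M > N \geq N_0$, the ultrametric inequality gives
\[
\lVert S_M - S_N\rVert_c = \Bigl\lVert \sum_{n=N+1}^{M} a_n\otimes b_n\Bigr\rVert_c \leq \max_{N < n \leq M} \lVert a_n\otimes b_n\rVert_c < \varepsilon,
\]
so $(S_N)$ is Cauchy in $\widehat{\TT_c^r \otimes \TT_c^r}$, and therefore converges by completeness.

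I do not anticipate any real obstacle here; the only subtlety worth double-checking is that the cross-norm $\lVert a\otimes b\rVert_c := \lVert a\rVert_c \lVert b\rVert_c$, extended by supremum over representations as finite sums of simple tensors, indeed satisfies the strong triangle inequality on $\TT_c^r \otimes \TT_c^r$ (this is already asserted in the preceding definition), and that the completion inherits this property.
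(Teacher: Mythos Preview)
Your proof is correct and follows essentially the same approach as the paper: the forward direction is the observation that convergence forces the general term to tend to zero, and the converse uses the ultrametric triangle inequality together with completeness of the completion. The paper's proof is just a terser statement of exactly what you wrote out.
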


\begin{proof}
First, note that the sum $\sum_{i=0}^\infty a_n\otimes b_n$ trivially diverges if $\lVert a_n\otimes b_n\rVert_c$ does not converge to 0. On the other hand, if the individual simple tensors do converge to 0 in norm, then the convergence of the series follows from the ultrametric triangle inequality.
\end{proof}

\subsection{The element $\alpha_n$ }\label{SS:alpha convergence}
Our main goal in this subsection is to define an element $\alpha_n\in \mathbb{T}^r_{\theta}\otimes \mathbb{C}_{\infty}^r$ for each $n\in \mathbb{Z}_{\geq 1}$ which will be useful 
to interpret the right hand side of \eqref{E:Gnexpression} in terms of matrices $\Pi_n$ and $\Psi_n$ in \eqref{E:Gn tensor factorization Drinfeld}.

Recall the matrix $B$ from \eqref{D:B def} and set $\mathfrak{D}:=\det(B)$ and write
$
B^{-1}=\frac{1}{\mathfrak{D}}(\mathfrak{c}_{ji})_{ij}$ where $\mathfrak{c}_{ji}$ is the $(j,i)$-cofactor of $B$. 
By the construction of $B$, for each $1\leq \ell \leq r$, we  obtain 
\begin{equation}\label{E:cof}
\mathfrak{c}_{1\ell}=\begin{cases}
-\mathfrak{c}_{r\ell}^q & \text{ if $r$ is even}\\ 
\mathfrak{c}_{r\ell}^q & \text{ if $r$ is odd.}
\end{cases}
\end{equation}
Since $\xi_1,\dots,\xi_r$ are elements in $\phi[\theta]$, we have 
\begin{equation}\label{E:inverse}
B^{(1)}=\begin{pmatrix}
0&1 & & & \\
& & \ddots& & \\
& &  &\ddots & \\
& & &  &1 \\
-\frac{\theta}{k_r}&-\frac{k_1}{k_r}&\dots&\dots&-\frac{k_{r-1}}{k_r}
\end{pmatrix}B.
\end{equation}
This relation shows that 
\begin{equation}\label{E:det}
\mathfrak{D}^q=\begin{cases}
\frac{\theta \mathfrak{D}}{k_r}  & \text{ if $r$ is even}\\ 
-\frac{\theta \mathfrak{D}}{k_r} & \text{ if $r$ is odd.}
\end{cases}
\end{equation}
Hence, for each $m\geq 1$, we have 
\begin{equation}\label{E:invofB}
(B^{-1})^{(m)}=\begin{cases}
\frac{k_r^{1+q+\dots+q^{m-1}}}{\theta^{1+q+\dots+q^{m-1}}\mathfrak{D}}(\mathfrak{c}_{ji}^{q^m})_{ij}  & \text{ if $r$ is even}\\ 
(-1)^m\frac{k_r^{1+q+\dots+q^{m-1}}}{\theta^{1+q+\dots+q^{m-1}}\mathfrak{D}}(\mathfrak{c}_{ji}^{q^m})_{ij}& \text{ if $r$ is odd.}
\end{cases}
\end{equation}

For any positive integer $n$, in what follows, we define $\alpha_n\in \mathbb{T}_{\theta}^{r}\otimes \mathbb{C}_{\infty}^{r}$, a quantity related to the right-hand side of \eqref{E:Gnexpression} without the $\gamma$ term and after factoring out $\Pi_{n-1}\twist$ on the left and $(\Psi_n\twistinv)^\tr$ on the right,
\begin{multline}\label{E:alphadef}
\alpha_n:=(\mathfrak{U}^{-1})^{(n-(r-1))}\mathfrak{e}_{1}\otimes \mathfrak{e}_{r}^{\tr}\mathfrak{U}^{(n)} +F^{(n-(r-2))}(\mathfrak{U}^{-1})^{(n-(r-2))}\mathfrak{e}_{1}\otimes \mathfrak{e}_{r-1}^{\tr}\mathfrak{U}^{(n)}+\\
F^{(n-(r-2))}F^{(n-(r-3))} (\mathfrak{U}^{-1})^{(n-(r-3))}\mathfrak{e}_{1}\otimes \mathfrak{e}_{r-2}^{\tr}\mathfrak{U}^{(n)}+\dots+\\
F^{(n-(r-2))}\cdots F^{(n)} (\mathfrak{U}^{-1})^{(n)}\mathfrak{e}_{1}\otimes \mathfrak{e}_{1}^{\tr}\mathfrak{U}^{(n)} \in \mathbb{C}_{\infty}(t)^r\otimes \mathbb{C}_{\infty}^r.
\end{multline}

By \cite[Prop. 3.5]{KP23}, we have 
\[
\tilde{F}:=F-\Id_r=-\frac{t}{t-\theta}B^{-1}\begin{pmatrix}
    \xi_1&\xi_2&\cdots&\xi_r\\
    0&\cdots & \cdots & 0\\
    \vdots & & & \vdots\\
    0&\cdots & \cdots & 0
    \end{pmatrix}=-\frac{t}{t-\theta}\frac{1}{\mathcal{D}}\begin{pmatrix}
    \mathfrak{c}_{11}\xi_1 & \mathfrak{c}_{11} \xi_2&\cdots& \mathfrak{c}_{11}\xi_r\\
    \mathfrak{c}_{21}\xi_1& \mathfrak{c}_{21} \xi_2&\cdots& \mathfrak{c}_{21}\xi_r\\
    \vdots & & & \vdots\\
    \mathfrak{c}_{r1}\xi_1& \mathfrak{c}_{r1} \xi_2&\cdots& \mathfrak{c}_{r1}\xi_r    \end{pmatrix}.\]
Recall that for each $1\leq j \leq r$, we have $|\xi_j|=q^{1/(q^r-1)}$. On the other hand, since $|\mathcal{D}|=q^{1/(q-1)}$ and $|\mathfrak{c}_{ij}|\leq q^{\frac{q+\dots+q^{r-1}}{q^r-1}}$, for $n\geq 1$, we see that 
\begin{equation}\label{E:boundtildeF}
||\tilde{F}^{(n)}_{|t=\theta}||\leq q^{1-q^n}.
\end{equation}

Now using $F=\tilde{F}+\Id_r$ in \eqref{E:alphadef}, we further let 
\[
\alpha_n:=\tilde{\alpha}_n+\beta^{(n-(r-2))}
\]
so that 

%\begin{remark} Let us set $(a\otimes b)^{(1)}:=a^{(1)}\otimes b^{(1)}$ for any $a,b\in \mathbb{T}$. One can indeed show that $\alpha_n^{(1)}=\alpha_n$ which implies that $\alpha_n\in \mathbb{F}_q(t)^r\otimes \mathbb{F}_q^r$. In what follows, we will make a more explicit calculation and then provide the exact value of $\alpha_n$ in Theorem \ref{T:alpha}. The precise relationship between \eqref{E:Gnexpression} and $\alpha_n$ will be also given in \eqref{E:Gn tensor factorization Drinfeld2}.
%\end{remark}

\begin{multline}\label{E:beta1}
\beta:=(\mathfrak{U}^{-1})^{(-1)}\mathfrak{e}_{1}\otimes \mathfrak{e}_{r}^{\tr}\mathfrak{U}^{(r-2)} +\mathfrak{U}^{-1}\mathfrak{e}_{1}\otimes \mathfrak{e}_{r-1}^{\tr}\mathfrak{U}^{(r-2)}+(\mathfrak{U}^{-1})^{(1)}\mathfrak{e}_{1}\otimes \mathfrak{e}_{r-2}^{\tr}\mathfrak{U}^{(r-2)}\\
+\dots+ (\mathfrak{U}^{-1})^{(r-2)}\mathfrak{e}_{1}\otimes \mathfrak{e}_{1}^{\tr}\mathfrak{U}^{(r-2)}\\
=\frac{1}{k_{r}^{(-1)}}B^{-1}\mathfrak{e}_{r}\otimes k_r^{(-1)} \mathfrak{e}_1^{\tr} B^{(r-1)}+\frac{1}{k_{r}}(B^{-1})^{(1)}\mathfrak{e}_{r}\otimes (k_{r-1},k_r,0,\dots,0) B^{(r-1)}+\\
\frac{1}{k_{r}^{(1)}}(B^{-1})^{(2)}\mathfrak{e}_{r}\otimes (k_{r-2}^q,k_{r-1}^q,k_r^q,0,\dots,0)B^{(r-1)}
+\dots 
+\\
\frac{1}{k_{r}^{(r-2)}}(B^{-1})^{(r-1)}\mathfrak{e}_{r}\otimes (k_1^{q^{r-2}},\dots,k_{r-1}^{q^{r-2}},k_r^{q^{r-2}})B^{(r-1)}\in \mathbb{C}_{\infty}^{r}\otimes \mathbb{C}_{\infty}^{r}
\end{multline}

\begin{remark}\label{R:1} \textbf{Important Notational Comment:} In this remark, to distinguish the base spaces where our tensor products are over, we explicitly state them in our notation. Since $\mathfrak{c}^{\phi}$ ($\mathfrak{d}^{\phi}$ respectively) forms a $\mathbb{C}_{\infty}[t]$-basis for $M_\phi$ (for $N_\phi$ respectively), we conclude that $\{\mathfrak{d}_{i}^{\phi}\otimes \mathfrak{c}_j^{\phi}\}$ for $1\leq i,j\leq r$ forms a $\C_\infty[t]\otimes_{\C_\infty} \mathbb{C}_{\infty}[t]$-basis for $M_\phi\otimes_{\C_\infty} N_\phi$. We then tensor each of those motives with $\TT$ over $\C_\infty[t]$ to get $(\TT\otimes_{\C_\infty[t]}  M_\phi)\otimes_{\C_\infty} (\TT\otimes_{\C_\infty[t]} N_\phi)$ and view this as a $\TT\otimes_{\C_\infty}\TT$-free module.  We consider the map $f: (\TT\otimes_{\C_\infty[t]}  M_\phi)\otimes_{\C_\infty} (\TT\otimes_{\C_\infty[t]} N_\phi)\to \Mat_{r\times r}(\TT\otimes_{\C_\infty}\TT)$ sending each 
\[
g=\sum_{i,j=1}^{r}b_{ij}\mathfrak{d}_{i}^{\phi}\otimes \mathfrak{c}_j^{\phi}\in (\TT\otimes_{\C_\infty[t]}  M_\phi)\otimes_{\C_\infty} (\TT\otimes_{\C_\infty[t]} N_\phi)
\]
to $f(g):= (b_{ij})\in \Mat_{r\times r}(\TT\otimes_{\C_\infty}\TT)$.
Now observe that
\begin{align*}
(\TT\otimes_{\C_\infty[t]}  M_\phi)\otimes_{\C_\infty} (\TT\otimes_{\C_\infty[t]} N_\phi) & = (\TT\otimes_{\C_\infty[t]}  \C_\infty[t]^r)\otimes_{\C_\infty} (\TT\otimes_{\C_\infty[t]} \C_\infty[t]^r)\\
&= \TT^r\otimes_{\C_\infty} \TT^r\\
&= (\TT\otimes_{\C_\infty}\TT)^{r^2}\\
&= \Mat_{r\times r}(\TT\otimes_{\C_\infty}\TT),
\end{align*}
with $\TT\otimes_{\C_\infty}\TT$-basis given by $\{\mathfrak{d}_{i}^{\phi}\otimes \mathfrak{c}_j^{\phi}\}$ as above. Thus, $f$ forms an isomorphism of $\TT\otimes_{\C_\infty}\TT$-modules. This calculation applies equally to $\TT_c$. However, we will primarily use this construction in three specific cases, where it reduces significantly.

First, for an element such as $\beta$ given above, we have $\beta \in \C_\infty^r\otimes_{\C_\infty} \C_\infty^r \in \TT^r\otimes_{\C_\infty} \TT^r$. Thus $\beta \in \Mat_{r\times r}(\C_\infty\otimes_{\C_\infty} \C_\infty)$, so we will shortcut to viewing $\beta \in \Mat_{r\times r}(\C_\infty)$ using the natural isomorphism $\C_\infty\otimes_{\C_\infty}\C_\infty = \C_\infty$. In particular we have
\[
f\left(\sum_{i=1}^{r}\mathfrak{d}^{\phi}_i\otimes \mathfrak{c}^{\phi}_i\right)=\Id_r.
\]

Second, we use it for the element $\alpha_n \in \TT_\theta^r \otimes_{\C_\infty} \C_\infty^r$, so we view it in $\Mat_{r\times r}(\TT_\theta\otimes_{\C_\infty} \C_\infty) = \Mat_{r\times r}(\TT_\theta)$. 

%Third, we will use it on elements in $\TT_\theta^r \otimes_{\C_\infty} \TT^r$ as in Theorem \ref{T:convDrinfeld}, to which we then apply the map $\delta_0^{N_\phi}\otimes \Id$, as we do in the proof of Theorem \ref{T:logdrinfeld}. Since $\delta_0^{N_\phi}:\TT_\theta^r \to \C_\infty$, the image is in $\C_\infty \otimes_{\C_\infty} \TT^r = \TT^r$, and so we state the formulas for Theorem \ref{T:logdrinfeld} as living in $\Mat_{1\times r}(\TT)$. Thus, we will never really need to work with matrices over $\TT_\theta \otimes_{\C_\infty} \TT$, but we give the calculation here to explain the common space where all of our identities and calculations can be seen inside.
\end{remark}
Recall the matrix $\mathfrak{U}$ defined in \eqref{D:M def}. The next lemma will be crucial to determine the limiting behavior of $\alpha_n$. 
\begin{lemma}\label{L:bound} Let $c\in \mathbb{C}_{\infty}^{\times}$ and  $\mathfrak{J}$ be an element in $\Mat_{r\times r}(\mathbb{T}_c)$ such that for sufficiently large $n$, each entry of $\mathfrak{J}^{(n)}$ has $\lVert \cdot \rVert_{c}$-norm less than 1. Let $m$ be a non-negative integer. Then, for each $2\leq j \leq r$, in $\mathbb{T}_c^r\otimes \mathbb{C}_{\infty}^r$, we have
	\[
	\lim_{n\to \infty} \mathfrak{J}^{(n-m)}(\mathfrak{U}^{-1})^{(n-(r-j))}\mathfrak{e}_{1}\otimes \mathfrak{e}_{r-(j-1)}^{\tr}\mathcal{M}^{(n)}=0.
	\]
\end{lemma}
\begin{proof} Note that 
	\begin{equation}\label{E:arg1}
	\begin{split}
	(\mathfrak{U}^{-1})^{(j-2)}\mathfrak{e}_{1}\otimes \mathfrak{e}_{r-(j-1)}^{\tr}\mathfrak{U}^{(r-2)}&=(B^{-1})^{(j-1)}((V^{\tr})^{-1})^{(j-2)}\mathfrak{e}_1\otimes \mathfrak{e}_{r-(j-1)}^{\tr}(V^{\tr})^{(r-2)}B^{(r-1)}\\
	&=(B^{-1})^{(j-1)}k_r^{^{-q^{j-2}}}\mathfrak{e}_r\otimes (k_{r-(j-1)}^{q^{j-2}},\dots,k_{r-1}^{q^{j-2}},k_r^{q^{j-2}},0,\dots,0) B^{(r-1)}.
	\end{split}
\end{equation}
	On the other hand, observe that 
	\begin{equation}\label{E:arg2}
	(\mathfrak{U}^{-1})^{(n-(r-j))}\mathfrak{e}_{1}\otimes \mathfrak{e}_{r-(j-1)}^{\tr}\mathfrak{U}^{(n)}=\left((\mathfrak{U}^{-1})^{(j-2)}\mathfrak{e}_{1}\otimes \mathfrak{e}_{r-(j-1)}^{\tr}\mathfrak{U}^{(r-2)}\right)^{(n-r+2)}.
	\end{equation}
For each $1\leq i \leq r-1$, let 
\[
\mathcal{F}_i:=(B^{-1})^{(i)}\mathfrak{e}_{r}\otimes (k_{r-i}^{q^{i-1}},\dots,k_{r-1}^{q^{i-1}},k_r,0,\dots,0) B^{(r-1)}\in \mathbb{C}_{\infty}^r\otimes\mathbb{C}_{\infty}^r\cong \Mat_{r}(\mathbb{C}_{\infty}).
\]
We realize $\mathcal{F}_i\in \Mat_{r}(\mathbb{C}_{\infty})$ and let $\lVert \mathcal{F}_i\rVert$ be the maximum among the norms of the entries of $\mathcal{F}_i$. Since $k_r\in \mathbb{F}_q^{\times}$ and each entry of $\mathfrak{J}^{(n)}$ has $\lVert\cdot \rVert_c$-norm less than 1 for sufficiently large $n$, by \eqref{E:arg1},  \eqref{E:arg2} and the continuity of the twisting operation, it suffices to show that $\log_q(\lVert \mathcal{F}_i\rVert)\leq 0$ for each $i$. 

Note, from \eqref{E:det}, that $\inorm{\mathfrak{D}}=q^{1/(q-1)}$.  Finally, for any $1\leq \mu \leq r$, since $\xi_{\mu}$ is a $\theta$-torsion point, one obtains
\[
k_{r-i}^{q^{i-1}}\xi_\mu^{q^{r-1}}+\dots+k_{r-1}^{q^{i-1}}\xi_\mu^{q^{r+i-2}}+k_{r}\xi_\mu^{q^{r+i-1}}=-\theta^{q^{i-1}}\xi_{\mu}^{q^{i-1}}-k_1^{q^{i-1}}\xi_{\mu}^{q^{i}}-\dots-k_{r-i-1}^{q^{i-1}}\xi_{\mu}^{q^{r-2}}.
\]
Since $\inorm{k_i}\leq 1$, we see that 
\[
\log_q(\inorm{k_{r-i}^{q^{i-1}}\xi_\mu^{q^{r-1}}+\dots+k_{r-1}^{q^{i-1}}\xi_\mu^{q^{r+i-2}}+k_{r}\xi_\mu^{q^{r+i-1}}})\leq q^{i-1}+\frac{q^{i-1}}{q^r-1}.
\]
Similarly, a direct calculation implies that, for each $1\leq \nu \leq r$, $\inorm{\mathfrak{c}_{r\nu}}$ is bounded by $q^{1+q+\dots+q^{r-2}}/(q^r-1)$. Combining all these facts above, we obtain
\begin{align*}
\log_q(\lVert \mathcal{F}_i\rVert)&\leq -\frac{q^i}{q-1}+\frac{q^i+\cdots+q^{i+r-2}}{q^r-1}+q^{i-1}+\frac{q^{i-1}}{q^r-1}\\
&= -\frac{q^i}{q-1}+q^i\frac{1+q+\cdots+q^{r-1}}{q^r-1}\\
&=0
\end{align*}
as desired.
\end{proof}
Let $\alpha_n(\theta)$ denote the substitution $t=\theta$ on the left hand side of the tensor product in \eqref{E:alphadef} and we further set $\tilde{\alpha}_n(\theta):=\alpha_n(\theta)-\beta^{(n-(r-2))}$. The proof of Lemma \ref{L:bound} together with \eqref{E:boundtildeF} immediately implies our next lemma.

\begin{lemma}\label{L:tildelim} In $\mathbb{C}_{\infty}^r\otimes \mathbb{C}_{\infty}^r$, we have
\[
\lim_{n\to \infty}\tilde{\alpha}_n(\theta)=0.
\]    
\end{lemma}

In what follows, we state our next theorem whose proof will be provided in \S\ref{S:proofofbeta}.

\begin{theorem}\label{T:alpha}In $\mathbb{C}_{\infty}^r\otimes \mathbb{C}_{\infty}^r$, we have \[
\beta=\sum_{i=1}^{r}\mathfrak{d}^{\phi}_i\otimes \mathfrak{c}^{\phi}_i=\mathfrak{e}_1\otimes \mathfrak{e}_1^{\tr}+\dots+\mathfrak{e}_r\otimes \mathfrak{e}_r^{\tr}.\]
In other words, via the identification in Remark \ref{R:1}, $\beta=\Id_r$.
\end{theorem}

As a consequence of Lemma \ref{L:tildelim} and Theorem \ref{T:alpha}, we obtain our next corollary.
\begin{corollary}\label{C:alpha} In $\mathbb{C}_{\infty}^r\otimes \mathbb{C}_{\infty}^r$, we have
\[
\lim_{n\to \infty}\alpha_n(\theta)=\lim_{n\to \infty}(\tilde{\alpha}_n(\theta)+\beta^{(n-(r-2))})=\sum_{i=1}^{r}\mathfrak{d}^{\phi}_i\otimes \mathfrak{c}^{\phi}_i=\mathfrak{e}_1\otimes \mathfrak{e}_1^{\tr}+\dots+\mathfrak{e}_r\otimes \mathfrak{e}_r^{\tr}.
\]    
In other words, via the identification in Remark \ref{R:1}, $\lim_{n\to \infty}\alpha_n(\theta)=\Id_r$.
\end{corollary}

\subsection{Proof of Theorem \ref{T:alpha}}\label{S:proofofbeta} The proof of Theorem \ref{T:alpha} occupies \S \ref{SS:Even rank case} and \S \ref{SS:Odd rank case}.  Note, as it is used in the proof of Lemma \ref{L:bound}, that since $\xi_1,\dots,\xi_{r}$ are elements in $\phi[\theta]$, we have 
\begin{equation}\label{E:torsionid}
k_i\xi_j^{q^{i}}+\cdots + k_r\xi_j^{q^{r}}=-\theta \xi_j-k_1\xi_j^{q}-\cdots -k_{i-1}\xi_j^{q^{i-1}}
\end{equation}
for any $2\leq i \leq r$ and $1\leq j \leq r$. We comment that by definition, and since Frobenius twisting is continuous, we know that 
\[\alpha\twist =\lim_{n\to \infty}\beta^{(n-(r-2)+1)} = \alpha,\]
which shows that $\alpha$ is defined over $\F_q$. The proof we give here shows that directly by demonstrating that in fact $\alpha$ equals the identity.

\subsubsection{Even rank case}\label{SS:Even rank case} Let us set $r=2n$ for some positive integer $n$. By using \eqref{E:det}, \eqref{E:invofB}, \eqref{E:torsionid} as well as the definition of $\beta$ given in \eqref{E:beta1}, we obtain
\begin{multline*}
\beta=\frac{1}{\mathfrak{D}}\begin{pmatrix}
\frac{\mathfrak{c}_{(2n)1}}{k_{{2n}}^{(-1)}} & \mathfrak{c}_{(2n)1}^q & \dots & \mathfrak{c}_{(2n)1}^{q^{2n-1}}\\
\frac{\mathfrak{c}_{(2n)2}}{k_{{2n}}^{(-1)}} & \mathfrak{c}_{(2n)2}^q & \dots & \mathfrak{c}_{(2n)2}^{q^{2n-1}}\\
\vdots &\vdots &  & \vdots \\
\frac{\mathfrak{c}_{(2n)(2n)}}{k_{{2n}}^{(-1)}} & \mathfrak{c}_{(2n)(2n)}^q & \dots & \mathfrak{c}_{(2n)(2n)}^{q^{2n-1}}
\end{pmatrix}
\begin{pmatrix}
0& \dots & \dots & \dots & 0 &k_{{2n}}^{(-1)}\\
-1& -\frac{k_1}{\theta} & -\frac{k_2}{\theta}  & \dots & -\frac{k_{2n-2}}{\theta} &0\\
& -\frac{k_{2n}}{\theta}  & -\frac{k_{2n}k_1^q}{\theta^{1+q}}  & & -\frac{k_{2n}k_{2n-3}^q}{\theta}  &\vdots\\
&  & -\frac{k_{2n}^{1+q}}{\theta^{1+q}}  &\ddots  & \vdots &\vdots\\
&  &   & \ddots &-\frac{k_{2n}^{1+\dots+q^{2n-4}}k_{1}^{q^{2n-3}}}{\theta^{1+\dots+q^{2n-3}}} &\vdots\\
&  &   & &-\frac{k_{2n}^{1+\dots+q^{2n-3}}}{\theta^{1+\dots+q^{2n-3}}}  & 0
\end{pmatrix}B.
\end{multline*}
Let us set 
$
\mathfrak{B}:=\beta B^{-1}.
$
Our goal is to show that $\mathfrak{B}=B^{-1}$. Firstly, by 
\eqref{E:cof} and a simple calculation, the first and last column of $\mathfrak{B}$ and $B^{-1}$ are equal. Hence $\mathfrak{B}=B^{-1}$ when $n=1$. Now assume that $n>1$. Note that \eqref{E:inverse} also implies 
\begin{equation}\label{E:inv2}
(B^{(1)})^{-1}\begin{pmatrix}
0&1 & & & \\
& & \ddots& & \\
& &  &\ddots & \\
& & &  &1 \\
-\frac{\theta}{k_{2n}}&-\frac{k_1}{k_{2n}}&\dots&\dots&-\frac{k_{2n-1}}{k_{2n}}
\end{pmatrix}=B^{-1}=\frac{1}{\mathfrak{D}}\begin{pmatrix}
-\mathfrak{c}_{(2n)1}^q & \mathfrak{c}_{21} & \dots &\mathfrak{c}_{(2n-1)1}& \mathfrak{c}_{(2n)1}\\
-\mathfrak{c}_{(2n)2}^q & \mathfrak{c}_{22} & \dots & \mathfrak{c}_{(2n-1)2}&\mathfrak{c}_{(2n)2}\\
\vdots &\vdots &  & \vdots & \vdots \\
-\mathfrak{c}_{(2n)(2n)}^q & \mathfrak{c}_{2(2n)} & \dots & \mathfrak{c}_{(2n-1)(2n)}& \mathfrak{c}_{(2n)(2n)}
\end{pmatrix}.
\end{equation}
For each $2\leq m \leq 2n-1$ and $1\leq i \leq 2n$, we claim  that 
\begin{multline}\label{E:claim}
\mathfrak{c}_{mi}=-\bigg(\frac{\mathfrak{c}_{(2n)i}^{q^m}k_{2n}^{1+q+\dots+q^{m-2}}}{\theta^{1+q+\dots+q^{m-2}}}+\frac{k_1^{q^{m-2}}\mathfrak{c}_{(2n)i}^{q^{m-1}}k_{2n}^{1+q+\dots+q^{m-3}}}{\theta^{1+q+\dots+q^{m-2}}}+\frac{k_2^{q^{m-3}}\mathfrak{c}_{(2n)i}^{q^{m-2}}k_{2n}^{1+q+\dots+q^{m-4}}}{\theta^{1+q+\dots+q^{m-3}}}+\dots+\\
\frac{k_{m-2}^q\mathfrak{c}_{(2n)i}^{q^2}k_{2n}}{\theta^{1+q}}+\frac{k_{m-1}\mathfrak{c}_{(2n)i}^q}{\theta} \bigg).
\end{multline}
When $m=2$, we have 
$
\mathfrak{c}_{2i}=-\frac{\mathfrak{c}_{(2n)i}^{q^2}k_{2n}}{\theta}-\frac{k_1\mathfrak{c}_{(2n)i}^{q}}{\theta}.
$
Assume that it holds for $m$. Note, by \eqref{E:det} and \eqref{E:inv2}, we have 
\[
\mathfrak{c}_{(m+1)i}= \frac{\mathfrak{c}_{mi}^qk_{2n}}{\theta}-\frac{k_m\mathfrak{c}_{(2n+1)i}^q}{\theta}.
\]
Using the induction hypothesis, we obtain
\begin{align*}
\mathfrak{c}_{(m+1)i}&=\frac{\mathfrak{c}_{mi}k_{2n}}{\theta}-\frac{k_m\mathfrak{c}_{(2n+1)i}^q}{\theta}\\
&=-\bigg(\frac{\mathfrak{c}_{(2n)i}^{q^{m+1}}k_{2n}^{1+q+\dots+q^{m-1}}}{\theta^{1+q+\dots+q^{m-1}}}+\frac{k_1^{q^{m-1}}\mathfrak{c}_{(2n)i}^{q^{m}}k_{2n}^{1+q+\dots+q^{m-2}}}{\theta^{1+q+\dots+q^{m-1}}}+\frac{k_2^{q^{m-2}}\mathfrak{c}_{(2n)i}^{q^{m-1}}k_{2n}^{1+q+\dots+q^{m-3}}}{\theta^{1+q+\dots+q^{m-2}}}+\dots+\\
&\ \ \ \ \ \ \ \  \frac{k_{m-2}^{q^2}\mathfrak{c}_{(2n)i}^{q^3}k_{2n}^{1+q}}{\theta^{1+q+q^2}}+\frac{k_{m-1}^q\mathfrak{c}_{(2n)i}^{q^2}k_{2n}}{\theta^{1+q}}+ \frac{k_m\mathfrak{c}_{(2n+1)i}^q}{\theta}\bigg)
\end{align*}
which proves our claim. Note that the right hand side of \eqref{E:claim} is the $(i,m)$-entry of $\mathfrak{B}$. This immediately implies that $\mathfrak{B}=B^{-1}$ and hence we have
$
\beta=\Id_{2n}.
$
%Furthermore, by \eqref{E:twsit}, we obtain
%$
%\alpha=\sum_{i=1}^{2n}\mathfrak{d}^{\phi}_i\otimes \mathfrak{c}^{\phi}_i.
%$

\subsubsection{Odd rank case}\label{SS:Odd rank case} Let us set $r=2n+1$ for some positive integer $n$. Using \eqref{E:det}, \eqref{E:invofB}, \eqref{E:torsionid} and the definition of $\beta$ given in \eqref{E:beta1}, we see that
\begin{multline*}
\beta=\frac{1}{\mathfrak{D}}\begin{pmatrix}
\frac{\mathfrak{c}_{(2n+1)1}}{k_{2n+1}^{(-1)}} & \mathfrak{c}_{(2n+1)1}^q & \dots & \mathfrak{c}_{(2n+1)1}^{q^{2n}}\\
\frac{\mathfrak{c}_{(2n+1)2}}{k_{2n+1}^{(-1)}} & \mathfrak{c}_{(2n+1)2}^q & \dots & \mathfrak{c}_{(2n+1)2}^{q^{2n}}\\
\vdots &\vdots &  & \vdots \\
\frac{\mathfrak{c}_{(2n+1)(2n+1)}}{k_{2n+1}^{(-1)}} & \mathfrak{c}_{(2n+1)(2n+1)}^q & \dots & \mathfrak{c}_{(2n+1)(2n+1)}^{q^{2n}}
\end{pmatrix}\\ 
\times \begin{pmatrix}
0& \dots & \dots & \dots & 0 &k_{2n+1}^{(-1)}\\
1& \frac{k_1}{\theta} & \frac{k_2}{\theta}  & \dots & \frac{k_{2n-1}}{\theta} &0\\
& -\frac{k_{2n+1}}{\theta}  & -\frac{k_{2n+1}k_1^q}{\theta^{1+q}}  & & -\frac{k_{2n+1}k_{2n-2}}{\theta^{1+q}} &\vdots\\
&  & \ddots  &\ddots  & \vdots &\vdots\\
&  &   & \frac{k_{2n+1}^{1+\dots+q^{2n-3}}}{\theta^{1+\dots+q^{2n-3}}} &\frac{k_{2n+1}^{1+\dots+q^{2n-3}}k_{1}^{q^{2n-3}}}{\theta^{1+\dots+q^{2n-2}}} &\vdots\\
&  &   & &-\frac{k_{2n+1}^{1+\dots+q^{2n-2}}}{\theta^{1+\dots+q^{2n-2}}}  & 0
\end{pmatrix}B.
\end{multline*}
Consider 
$
\mathfrak{C}:=\beta B^{-1}.
$
Our goal is to show that $\mathfrak{C}=B^{-1}$. Firstly, by 
\eqref{E:cof} and a simple calculation, the first and last column of $\mathfrak{C}$ and $B^{-1}$ are equal. On the other hand, similar to \eqref{E:inv2}, observe that, by \eqref{E:inverse},  \eqref{E:cof} and \eqref{E:det}, we have
\begin{multline}\label{E:inv222}
\frac{k_{2n+1}}{\theta\mathfrak{D}}\begin{pmatrix}
-\mathfrak{c}_{(2n+1)1}^{q^2} & -\mathfrak{c}_{21}^q & \dots &-\mathfrak{c}_{(2n)1}^q& -\mathfrak{c}_{(2n+1)1}^q\\
-\mathfrak{c}_{(2n+1)2}^{q^2} & -\mathfrak{c}_{22} & \dots & -\mathfrak{c}_{(2n)2}^q&-\mathfrak{c}_{(2n+1)2}^q\\
\vdots &\vdots &  & \vdots & \vdots \\
-\mathfrak{c}_{(2n+1)(2n+1)}^{q^2} & -\mathfrak{c}_{2(2n+1)}^q & \dots & -\mathfrak{c}_{(2n)(2n+1)}^q& -\mathfrak{c}_{(2n+1)(2n+1)}^q
\end{pmatrix}\times \\
\begin{pmatrix}
0&1 & & & \\
& & \ddots& & \\
& &  &\ddots & \\
& & &  &1 \\
-\frac{\theta}{k_{2n+1}}&-\frac{k_1}{k_{2n+1}}&\dots&\dots&-\frac{k_{2n}}{k_{2n+1}}
\end{pmatrix}
=\frac{1}{\mathfrak{D}}\begin{pmatrix}
\mathfrak{c}_{(2n+1)1}^q & \mathfrak{c}_{21} & \dots &\mathfrak{c}_{(2n)1}& \mathfrak{c}_{(2n+1)1}\\
\mathfrak{c}_{(2n+1)2}^q & \mathfrak{c}_{22} & \dots & \mathfrak{c}_{(2n)2}&\mathfrak{c}_{(2n+1)2}\\
\vdots &\vdots &  & \vdots & \vdots \\
\mathfrak{c}_{(2n+1)(2n)}^q & \mathfrak{c}_{2(2n+1)} & \dots & \mathfrak{c}_{(2n)(2n+1)}& \mathfrak{c}_{(2n+1)(2n+1)}
\end{pmatrix}.
\end{multline}

For each $2\leq m \leq 2n$ and $1\leq i \leq 2n+1$, we claim  that 
\begin{multline}\label{E:claim2}
\mathfrak{c}_{mi}=\frac{(-1)^{m-1}\mathfrak{c}_{(2n+1)i}^{q^m}k_{2n+1}^{1+q+\dots+q^{m-2}}}{\theta^{1+q+\dots+q^{m-2}}}+\frac{(-1)^{m}k_1^{q^{m-2}}\mathfrak{c}_{(2n+1)i}^{q^{m-1}}k_{2n+1}^{1+q+\dots+q^{m-3}}}{\theta^{1+q+\dots+q^{m-2}}}\\
+\frac{(-1)^{m+1}k_2^{q^{m-3}}\mathfrak{c}_{(2n+1)i}^{q^{m-2}}k_{2n+1}^{1+q+\dots+q^{m-4}}}{\theta^{1+q+\dots+q^{m-3}}}+\dots+
\frac{(-1)^{2m-4}k_{m-3}^{q^{2}}\mathfrak{c}_{(2n+1)i}^{q^{3}}k_{2n+1}^{1+q}}{\theta^{1+q+q^2}}\\+\frac{(-1)^{2m-3}k_{m-2}^q\mathfrak{c}_{(2n+1)i}^{q^2}k_{2n+1}}{\theta^{1+q}}+\frac{(-1)^{2m-2}k_{m-1}\mathfrak{c}_{(2n+1)i}^q}{\theta}.
\end{multline}
When $m=2$, we have 
\[
\mathfrak{c}_{2i}=-\frac{\mathfrak{c}_{(2n+1)i}^{q^2}k_{2n+1}}{\theta}+\frac{k_1\mathfrak{c}_{(2n+1)i}^{q}}{\theta}.
\]
Assume that it holds for $m$. Note, by \eqref{E:inv222}, we have 
\[
\mathfrak{c}_{(m+1)i}= -\frac{\mathfrak{c}_{mi}^qk_{2n+1}}{\theta}+\frac{k_m\mathfrak{c}_{(2n+1)i}^q}{\theta}.
\]
By the induction hypothesis, we have
\begin{align*}
\mathfrak{c}_{(m+1)i}&=-\frac{\mathfrak{c}_{mi}}{\theta}+\frac{k_m\mathfrak{c}_{(2n+1)i}^q}{\theta}\\
&=\frac{(-1)^{m}\mathfrak{c}_{(2n+1)i}^{q^{m+1}}k_{2n+1}^{1+q+\dots+q^{m-1}}}{\theta^{1+q+\dots+q^{m-1}}}+\frac{(-1)^{m+1}k_1^{q^{m-1}}\mathfrak{c}_{(2n+1)i}^{q^{m}}k_{2n+1}^{1+q+\dots+q^{m-2}}}{\theta^{1+q+\dots+q^{m-1}}}\\
&\ \ \ \ +\frac{(-1)^{m+2}k_2^{q^{m-2}}\mathfrak{c}_{(2n+1)i}^{q^{m-1}}k_{2n+1}^{1+q+\dots+q^{m-3}}}{\theta^{1+q+\dots+q^{m-2}}}+\dots+ \frac{(-1)^{2m-3}k_{m-3}^{q^{3}}\mathfrak{c}_{(2n+1)i}^{q^{4}}k_{2n+1}^{1+q+q^2}}{\theta^{1+q+q^3}}\\
&\ \ \ \ +\frac{(-1)^{2m-2}k_{m-2}^{q^2}\mathfrak{c}_{(2n+1)i}^{q^3}k_{2n+1}^{1+q}}{\theta^{1+q+q^2}}+\frac{(-1)^{2m-1}k_{m-1}^q\mathfrak{c}_{(2n+1)i}^{q^2}k_{2n+1}}{\theta^{1+q}}+\frac{(-1)^{2m}k_m\mathfrak{c}_{(2n+1)i}^q}{\theta}
\end{align*}
which proves our claim. It is easy to see that the right hand side of \eqref{E:claim2} is the $(i,m)$-entry of $\mathfrak{C}$. Therefore $\mathfrak{C}=B^{-1}$ and thus we have
$
\beta=\Id_{2n+1}.
$
%Furthermore \eqref{E:twsit} now yields
%$
%\alpha=\sum_{i=1}^{2n+1}\mathfrak{c}^{\phi}_i\otimes \mathfrak{d}^{\phi}_i
%$
%as desired.

\subsection{Formulas for the logarithms} Recall the identities given in \eqref{E:sigmaeq} and \eqref{E:tau}. Observe, by  \eqref{E:sigmatau} and \eqref{E:Gnexpression}, that, for $n\geq r-1$, we have
\begin{equation}\label{E:Gn tensor factorization Drinfeld2}
\begin{split}
((1\otimes t) - (t\otimes 1))G_n^\otimes+\gamma &=\sum_{\ell=0}^{r-1}\sigma^{\ell-n}(\mathfrak{d}^{\phi}_1)  \otimes \tau^{n+1}(\mathfrak{c}^{\phi}_{\ell+1})\\
&=\sum_{\ell=0}^{r-1}\mathcal{P}_{n-\ell}\mathfrak{e}_1\otimes \mathfrak{e}_{\ell+1}^{\tr}\mathcal{S}_{n}\\
&=V^{\tr}\Pi_{n-r}^{(1)}(\mathfrak{U}^{-1})^{(n-(r-1))}\mathfrak{e}_1\otimes \mathfrak{e}_r^{\tr} \mathfrak{U}^{(n)}(\Psi_n^{(-1)})^{\tr}\\
& \ \ \ \ \ \ \ \ +V^{\tr}\Pi_{n-(r-1)}^{(1)}(\mathfrak{U}^{-1})^{(n-(r-2))}\mathfrak{e}_1\otimes \mathfrak{e}_{r-1}^{\tr} \mathfrak{U}^{(n)}(\Psi_n^{(-1)})^{\tr}+\cdots\\
& \ \ \ \ \ \ \ \ + V^{\tr}\Pi_{n-1}^{(1)}(\mathfrak{U}^{-1})^{(n)}\mathfrak{e}_1\otimes \mathfrak{e}_1^{\tr}\mathfrak{U}^{(n)} (\Psi_n^{(-1)})^{\tr}\\
&=V^{\tr}\Pi_{n-r}^{(1)}\alpha_n (\Psi_n^{(-1)})^{\tr}
\end{split}
\end{equation}
where the last equality follows from 
$
\Pi_{n-\ell}^{(1)}=\Pi_{n-r}^{(1)}F^{(n-(r-2))}\cdots F^{(n-(\ell-1))}.
$ We remind the reader that the condition $n\geq r-1$ is necessary to obtain the second equality above.

Recall the fundamental periods $\lambda_1,\dots,\lambda_r\in \mathbb{C}_{\infty}^{\times}$ of $\phi$ defined at beginning of the present section and consider the matrix $\Psi\in \GL_r(\mathbb{T})$ introduced in \S\ref{SS:Dual t-motives for Drinfeld} which is constructed by using  $\{\lambda_1,\dots,\lambda_r\}$ forming an $A$-basis for $\Ker(\exp_{\phi})$. 

We are now ready to prove the main result of this section. Recall the map $\mathcal{M}$ and $\mathcal{M}_{\zz}$ defined in \S\ref{SS:intro1.2}.
\begin{theorem}\label{T:logdrinfeld} Let $\opi = (\lambda_1,\dots,\lambda_r)$ be a vector of fundamental periods of $\phi$. We have
\[
\log_{\phi}=\mathcal{M}\left(-\frac{1}{t-\theta}\opi(\Psi^{\tr})^{(-1)}\right).
\]
Moreover, for any $\zz\in \mathbb{C}_{\infty}$ in the domain of convergence of $\log_{\phi}$, we have
\[
	\log_{\phi}(\zz)=\mathcal{M}_{\zz}\left(-\frac{1}{t-\theta}\opi(\Psi^{\tr})^{(-1)}\right).
	\]
\end{theorem}
\begin{proof} 
Observe that for each $0\leq \ell \leq r-1$, if we set $\sigma^{\ell-n}(\mathfrak{d}^{\phi}_1)=[\zeta_{(\ell+1)1,n},\dots, \zeta_{(\ell+1)r,n}]^{\tr}\in \mathbb{C}_{\infty}(t)^{r}$, then $\delta_0^{N_{\phi}}(\sigma^{\ell-n}(\mathfrak{d}_1^{\phi}))=\zeta_{(\ell+1)1,n}(\theta)$. On the other hand, we have 
\begin{equation}\label{E:proofDr1}
\begin{split}
    \varphi\left(\lim_{n\to \infty}\sum_{i=0}^{n}\delta_0^{N_{\phi}}(\sigma^{-i}(\mathfrak{d}_1^{\phi}))\tau^i(m_1)\right)&=\varphi\left(\lim_{n\to \infty}\sum_{i=0}^{n}\beta_i\tau^i(m_1)\right)\\    
    &=\lim_{n\to \infty}\tilde{\varphi}\left(\sum_{i=0}^n\beta_i\tau^i(m_1)\right)\\
    &=\lim_{n\to \infty}\frac{1}{t-\theta}\tilde{\varphi}\left(\sum_{\ell=0}^{r-1}\beta_{n-\ell}\tau^{n+1}(\mathfrak{c}_{\ell+1})\right)\\
    &=\lim_{n\to \infty}\frac{1}{t-\theta}\mathfrak{e}_1^{\tr}V^{\tr}(\Pi_{n-r}^{(1)})_{|t=\theta}\alpha_n(\theta) (\Psi_n^{(-1)})^{\tr}\\
    &=\frac{1}{t-\theta}\mathfrak{e}_1^{\tr}V^{\tr}\Upsilon^{(1)}|_{t=\theta}(\Psi^{\tr})^{(-1)}\in \Mat_{1\times r}(\mathbb{T}).
    \end{split}
    \end{equation}
Here the first equality follows from the fact that $\delta_0^{N_{\phi}}(\sigma^{-i}(\mathfrak{d}_1^{\phi}))=\beta_i$ for each $i\geq 0$ (\cite[Cor. 4.5]{Gre22}). For the second equality we note that 
\begin{equation}\label{E:proofDr2}
\lim_{n\to \infty}\sum_{i=0}^{n}\beta_i\tau^i(m_1)=\lim_{n\to \infty} \sum_{i=0}^{n}\beta_ik_r^{-1}\tau^i(\mathfrak{c}_r)=\lim_{n\to \infty}\sum_{i=0}^{n}\beta_i\tau^i=\sum_{i=0}^{\infty}\beta_i\tau^i=\log_{\phi}\in \mathbb{M},
\end{equation}
where we used the identification between $k_r^{-1}\tau^{i}(\mathfrak{c}_r)$ and $\tau^i$. The third equality follows from Lemma \ref{L:logcoef} and the fact that $\tilde{\varphi}$ is a $\mathbb{C}_{\infty}[t]$-linear map, the fourth equality follows from applying $\delta_{0}^{N_{\phi}}\otimes 1$ to \eqref{E:Gn tensor factorization Drinfeld2} as well as the structure of $\delta_{0}^{N_{\phi}}$-map described above and finally the last equality follows from Theorem \ref{T:product}, \eqref{E:rigiddual} and Corollary \ref{C:alpha}.

On the other hand, observe that $\mathfrak{e}_1^{\tr} V^{\tr} = (k_1,\dots,k_r)$. Let $f_i=s_{\phi}(\lambda_i;t)$ be as introduced in \S \ref{SS:t-motive for Drinfeld}. Then, by Proposition \ref{P:AGF}, we find that for $1\leq i\leq r$, the $i$-th entry of $\mathfrak{e}_1^{\tr} V^{\tr} \Upsilon\twist$ is given by
\[k_1f_i\twist + k_2 f_i\twistk{2} + \dots + k_r f_i\twistk{r} = (t-\theta)f_i.\]
Therefore evaluating this at $t=\theta$ gives $\Res_\theta f_i$ which equals $-\lambda_i$ by \cite[(3.4.3)]{CP12}. Putting this all together gives
\begin{equation}\label{E:proofDr3}
\mathfrak{e}_1^{\tr} V^{\tr} \Upsilon\twist_{t=\theta} = -\opi,
\end{equation}

Sine, by Proposition \ref{P:inj}, the map $\varphi$ is injective, combining \eqref{E:proofDr1}, \eqref{E:proofDr2} and \eqref{E:proofDr3}, we obtain 
\begin{multline*}
\log_{\phi}=\varphi^{-1}\left( \frac{1}{t-\theta}\mathfrak{e}_1^{\tr}V^{\tr}\Upsilon^{(1)}|_{t=\theta}(\Psi^{\tr})^{(-1)} \right)=\varphi^{-1}\left(-\frac{1}{t-\theta}\opi(\Psi^{\tr})^{(-1)}\right)\\
=\mathcal{M}\left(-\frac{1}{t-\theta}\opi(\Psi^{\tr})^{(-1)}\right).
\end{multline*}
Finally, by Theorem \ref{T:log} and the first assertion, we obtain
\[
\log_{\phi}=\mathcal{M}_{\zz}\left(-\frac{1}{t-\theta}\opi(\Psi^{\tr})^{(-1)}\right)
\]
as desired.
\end{proof}

By specializing the value $\bz$ at certain prescribed points, we may conclude that the left-hand side of Theorem \ref{T:logdrinfeld} evaluates to a Taelman $L$-value (see \cite{Tae12} for more details). We further evaluate terms on the right-hand side to show that it includes periods and exponential functions which also indicates that our next result may be interpreted as a Mellin transform formula for Taelman $L$-values.

\begin{corollary}\label{C:Mellin for Drinfeld modules}
Let $\phi$ be a Drinfeld module as in Theorem \ref{T:Main theorem intro Drinfeld case} so that each $k_i\in \mathbb{F}_q^{\times}$. Then, letting $\zz=1$, we have
\[L(\phi^{\vee},0) =\mathcal{M}_{\zz}\left(-\frac{1}{t-\theta}\opi(\Psi^{\tr})^{(-1)}\right).\]
\end{corollary}

\begin{proof} Observe that we have $K_{\infty}=\mathfrak{M} \oplus A$. Since $\log_{\phi}$ converges at any element $z\in \mathbb{C}_{\infty}$ satisfying $|z|<q^{q^r/(q^r-1)}$, $\log_{\phi}(1)$ is well-defined and $\mathfrak{M}$ is in the domain of convergence of $\log_{\phi}$. Moreover,  by \cite[Thm. 3.3]{EGP13}, one can calculate the logarithm coefficients of $\phi$, which yields the fact that $\log_{\phi}(\mathfrak{M})\subseteq \mathfrak{M}$. 
	
To proceed, we define the $A$-module $H(\phi/A)$ given by the quotient
	\[
	H(\phi/A):=\frac{\phi(K_{\infty})}{\exp_{\phi}(K_{\infty})+\phi(A)}.
	\]
	Here, by $\phi(K_{\infty})$ and $\phi(A)$, we mean the $A$-modules $K_{\infty}$ and $A$ equipped with the $A$-module structure induced from $\phi$. Since $\exp_{\phi}$ is the formal inverse of $\log_{\phi}$, we now see that $\exp_{\phi}(K_{\infty})\supseteq \mathfrak{M}$. Thus, $\exp_{\phi}(K_{\infty})+\phi(A)\supseteq \phi(K_{\infty})$, implying that $H(\phi/A)$ is trivial. On the other hand, if we set $U(\phi/A):=\{ u\in K_{\infty} \ \ | \exp_{\phi}(u)\in A \}$, by \cite[Thm. 1.10]{Fan15}, we know that $U(\phi/A)$ is an $A$-module of rank one. Indeed, since the norm of $\log_{\phi}(1)$, being equal to 1, is minimal among the elements of $U(\phi/A)$, we obtain that $U(\phi/A)=A\log_{\phi}(1)$. Thus, by \cite[Rem. 5, Thm. 1]{Tae12} (see also \cite[\S3]{ChangEl-GuindyPapanikolas}), we obtain $L(\phi^{\vee},0) =\log_{\phi}(1)$.
The result then follows from Theorem \ref{T:logdrinfeld}.
\end{proof}

\begin{remark}\label{R:Log* and Drinfeld Modular Forms}
At the present, we do not know if our formulas provide a connection between Drinfeld modular forms and $L$-series. However, there are some hints in this direction provided by the case of the Carlitz module. In this setting, for $\zz=1$, our formulas give
\[\mathcal{M}_{\zz}(-\tpi \Omega) = \zeta_A(1).\]
In seeking to connect the LHS of this formula with a Drinfeld modular form, we are inspired to write $\Omega$ in terms of the commonly used Drinfeld modular form uniformizer, $u(z) := 1/\exp_C(\tpi z)$. We then write
\[1/\Omega\twistinv = \omega_C = \exp_C\left (\frac{\tpi}{\theta-t}\right ) = \sum_{i=0}^\infty \exp_C\left (\frac{\tpi}{\theta^{i+1}}\right )t^i.\]
Thus the reciprocal of $\Omega\twistinv$ can be written as a sum of $u(z)$ evaluated at certain powers of $\theta$. This construction is somewhat forced, and seems unlikely to lead to a meaningful connection with Drinfeld modular forms in our opinion. More natural is to do the following. Recall the adjoint of the Carlitz module, $C_\theta^*(z) = \theta z + z^{1/q}$ (see \cite[\S 3.7]{Gos96}). It comes equipped with an exponential function $\exp_C^*(z)$ which satisfies
\[\theta \exp_C^*(z) = \exp_C^*(C_\theta^*(z)).\]
Formally, $C^*$ also has a logarithm series $\log_C^*$, which is the formal (fractional) power series inverse of $\exp_C^*$, which satisfies
\[C_t^*(\log_C^*(z)) = \log_C^*(\theta z).\]
However, this construction produces a power series with 0 radius of convergence! If we had a way to rigorously construct the function $\log_C^*$, it should produce a function with a free rank 1 period generated by an element $\pi^*$, and we would use this to define
\[g(t) = \log_C^*\left (\frac{\pi^*}{\theta - t}\right ),\]
and we would have that both $g(t)$ and $\Omega$ satisfy 
\[t g(t) = C_{\theta}^*(g(t)),\quad t\Omega = C_{\theta}^*(\Omega).\]
Thus the two functions are equal up to normalization. Finally, we use this identification to rewrite our main theorem
\[\mathcal{M}_{\zz}(-\tpi \Omega) = \mathcal{M}_{\zz}\left(-\tpi \sum_{i=0}^\infty \log_C^*(\pi^* \theta^{-i-1})t^i\right).\]

We anticipate that there seems to be a more natural connection between the logarithm function of the adjoint Carlitz module $\log_C^*$ (see \cite[\S 3.7]{Gos96}) and Drinfeld modular forms. However, we are unsure how to make this connection rigorous, so this is a topic for future study.

\end{remark}

\section{Logarithms of tensor product of Drinfeld modules with the tensor powers of the Carlitz module}\label{S:Log for D otimes C}

Throughout this section, we fix a positive integer $k\geq 1$ and continue to assume that $\phi$ is a Drinfeld module given by
\[\phi_{\theta} = \theta + k_1 \tau + \dots + k_r\tau^r\in \mathbb{C}_{\infty}[\tau]\]
so that $\inorm{k_i}\leq 1$ for each $1\leq i \leq r-1$ and $k_r\in \mathbb{F}_q^{\times}$. We also remark that, throughout this section, our tensor products, except those used to denote tensor products of Drinfeld modules and tensor powers of Carlitz module, are still considered over $\mathbb{C}_{\infty}$.

We examine the case where our Anderson $t$-module is chosen to be $\phi\otimes C^{\otimes k}=(\mathbb{G}_{a/\mathbb{C}_{\infty}}^{rk+1},\rho)$ detailed in Example \ref{Ex:1}(iii). From \S\ref{SS:t-motive for D tensor C} and \S\ref{SS:Dual t-motive for D tensor C}, recall the Anderson $t$-motive $ M_{\phi\otimes C^{\otimes k}}$ and the dual $t$-motive $N_{\phi\otimes C^{\otimes k}}$ attached to $\phi\otimes C^{\otimes k}$. To simplify the notation, in this section, we set $N_{\rho}:=N_{\phi\otimes C^{\otimes k}}$ and $M_{\rho}:=M_{\phi\otimes C^{\otimes k}}$.

From \S\ref{SS:t-motive for D tensor C} and \S\ref{SS:Dual t-motive for D tensor C} again, consider the $\mathbb{C}_{\infty}[t]$-basis $\{\mathfrak{c}_1,\dots,\mathfrak{c}_r\}$ and $\{\mathfrak{d}_1,\dots,\mathfrak{d}_r\}$ as well as the $\mathbb{C}_{\infty}[\tau]$-basis $\{g_1,\dots,g_{rk+1}\}$ and 
$\mathbb{C}_{\infty}[\sigma]$-basis $\{h_1,\dots,h_{rk+1}\}$ for $M_{\rho}$ and $N_{\rho}$ respectively. Let us set 
\[
\tilde{V}:=\begin{pmatrix}
0&k_2^{(-1)} &k_3^{(-2)}& \dots &k_r^{(1-r)}\\
\vdots &\vdots&\vdots&  \iddots & \\
\vdots & \vdots& k_r^{(-2)} & & \\
0&k_r^{(-1)} & & & \\
1 &  & & & 
\end{pmatrix}\in \GL_r(\mathbb{C}_{\infty}).
\]
Then we have 
\begin{equation}\label{E:tmot}
\tilde{V}\begin{bmatrix}
\mathfrak{d}_1\\
\vdots\\
\vdots\\
\mathfrak{d}_r
\end{bmatrix}=\begin{bmatrix}
h_{r(k-1)+2}\\
\vdots\\
\vdots\\
h_{rk+1}
\end{bmatrix}  \text{  and  } [g_1,\dots,g_{r-1},\mathfrak{c}_1]=[\mathfrak{c}_1,\dots, \mathfrak{c}_{r}](\tilde{V}^{(-1)})^{-1}.
\end{equation}

Next we consider the matrices $\mathcal{S}_n$ and $\mathcal{P}_n$ from \S\ref{SS:varphi map} and \S\ref{S:product} respectively. For the convenience of the reader, we precisely write 
\[
\mathcal{P}_n=(\mathcal{P}_{n}^{(-1)})^{(1)}=((\Phi^{\tr})^{-1}((\Phi^{\tr})^{-1})^{(1)}\cdots ((\Phi^{\tr})^{-1})^{(n-1)})^{(1)}=V^{\tr} \Pi_{n-1}^{(1)} (\mathfrak{U}^{-1})^{(n)}
\]
and
\[
\mathcal{S}_n=(\Phi^{\tr})^{(n)}(\Phi^{\tr})^{(n-1)}\cdots (\Phi^{\tr})=\mathfrak{U}^{(n)}\Pi_n^{-1}((V^{(-1)})^{-1})^{\tr}=\mathfrak{U}^{(n)}(\Psi_{n}^{(-1)})^{\tr}.
\]
 Recall the polynomials $\mathbb{S}_n\in A[t]$ defined in Example \ref{Ex:11}. For each $k\geq 1$, we further set 
\[
\widetilde{\mathcal{P}}^k_n:=(\mathbb{S}_{n-1}^{(1)})^{-k}\mathcal{P}_n \text{ and }\widetilde{\mathcal{S}}^k_n:=\mathbb{S}_n^{k}\mathcal{S}_n.\]
Recall the definition of $\tilde{N}_{\rho}$ from \S\ref{SS:Log of And t-modules}. Thus, for any $\tilde{n}\in \tilde{N}_{\rho}$ ($\tilde{m}\in M_{\rho}$ respectively) given by $\tilde{n}=\sum_{i=1}^ra_i \mathfrak{d}_i$ ($\tilde{m}=\sum_{i=1}^rb_i \mathfrak{c}_i$ respectively), using \eqref{E:tauactiontens}, \eqref{E:sigmatensor} and \eqref{E:sigmainv}, we have 
\begin{equation}\label{E:sigmatautensor}
\sigma^{-n}(\tilde{n})=\widetilde{\mathcal{P}}^k_n\begin{bmatrix}
a_1\\
\vdots\\
a_r
\end{bmatrix}^{(n)} \text{ and } \ \tau^{n}(\tilde{m})=[b_1,\dots,b_r]^{(n)} \widetilde{\mathcal{S}}^k_{n-1}.
\end{equation}

\subsection{The structure of $\delta_0^{N_{\rho}}$-map}\label{SS:delta_0 map D otimes C}  In what follows, we analyze the behavior of $\delta_0^{N_{\rho}}$. In particular, we define an explicit isomorphism of $\mathbb{C}_{\infty}[t,\sigma]$-modules which allows us to compute the values of the map $\delta_0^{N_{\rho}}$. For more details on such construction, we refer the reader to \cite[\S4,6]{GN21}.

Consider the $\mathbb{C}_{\infty}[t,\sigma]$-module $\mathcal{N}:=\Mat_{1\times (rk+1)}(\mathbb{C}_{\infty}[\sigma])$ whose $\mathbb{C}_{\infty}[t]$-module structure is given by 
\[
ct^i\cdot \mathfrak{n}:=c\mathfrak{n} \rho_{\theta^i}^{*}, \ \ c\in \mathbb{C}_{\infty},\ \ \mathfrak{n} \in \mathcal{N}.
\]
For any $1\leq i \leq rk+1$, let $\mathfrak{f}_i\in \Mat_{1\times (rk+1)}(\mathbb{F}_q)$  be the $i$-th unit vector. For any $1\leq i \leq r$, we set $\mathfrak{n}_i:=\mathfrak{f}_{r(k-1)+i+1}\in \mathcal{N}$. Note, as it is already observed in \cite[(45)]{GN21}, that we have $(t-\theta)^{k}\mathfrak{n}_r=\mathfrak{f}_1$ and for $1\leq \mu \leq k$, one obtains $(t-\theta)^{k-\mu}\mathfrak{n}_i=\mathfrak{f}_{r(\mu-1)+i+1}$. Furthermore, a direct calculation implies that the set $\{\mathfrak{n}_1,\dots,\mathfrak{n}_r\}$ forms a $\mathbb{C}_{\infty}[t]$-basis for $\mathcal{N}$. 

There exists a $\mathbb{C}_{\infty}[t,\sigma]$-module isomorphism $\iota:N_{\rho}\to \mathcal{N}$ given by 
\[
\iota\left(\sum_{j=1}^r\mathfrak{r}_jh_{r(k-1)+j+1}\right):=\mathfrak{r}_1\cdot \mathfrak{n}_1+\cdots +\mathfrak{r}_r\cdot \mathfrak{n}_r, \ \ \mathfrak{r}_1,\dots, \mathfrak{r}_r\in \mathbb{C}_{\infty}[t].
\]
We further define certain elements $v_{ij}\in \mathbb{C}_{\infty}$ so that 
\[
\tilde{V}^{-1}=\begin{pmatrix}
&  & &  &v_{1r}\\
& & &  v_{2(r-1)} & 0\\ 
&  & \iddots & & \vdots\\
& \iddots & & & \vdots \\
v_{r1} &\cdots  & \cdots & v_{r(r-1)} & 0
\end{pmatrix}.
\]
This implies, by \eqref{E:tmot}, that $\iota(\mathfrak{d}_1)=v_{1r}\mathfrak{n}_{r}$ and for $2\leq \ell \leq r$, we have 
\[
\iota(\mathfrak{d}_{\ell})=v_{\ell (r-\ell+1)}\mathfrak{n}_{r-\ell+1}+\cdots +v_{\ell (r-1)}\mathfrak{n}_{r-1}.
\]
Thus, by the definition of $\delta_{0}^{N_\rho}$, if $n=\sum_{j=1}^r\left(\sum_{\ell=0}^{m_{j}}a_{j,\ell}(t-\theta)^{\ell}\right)\mathfrak{d}_j\in N_\rho$, then 
\begin{equation}\label{E:deltatensor}
\delta_{0}^{N_{\rho}}(n)=\begin{pmatrix}
*\\
\vdots \\
*\\
a_{r0}v_{r1}\\
\sum_{j=r-1}^ra_{j0}v_{j2}\\
\vdots \\
\sum_{j=2}^ra_{j0}v_{j(r-1)}\\
a_{10}v_{1r}
\end{pmatrix}.
\end{equation}
Since $(t-\theta)^{k+1}N_{\rho}\subset \sigma N_\rho$, the map $\delta_{0}^{N_\rho}$ may be calculated similarly at $\sigma^{-\ell}(n)$ for any non-negative integer $\ell$.

\subsection{An analysis on elements in $\mathcal{D}_{\phi\otimes C^{\otimes k}}$} In this subsection, our goal is to introduce a bound on the entries of $\zz\in \mathbb{C}_{\infty}^{rk+1}$ so that $\zz$ lies in the domain of convergence $\mathcal{D}_{\phi\otimes C^{\otimes k}}$ of $ \Log_{\phi\otimes C^{\otimes k}}$. Our main result Proposition \ref{P:02} in this subsection may be compared with the analysis of Anderson and Thakur on the logarithm function of $C^{\otimes k}$ \cite[Prop. 2.4.3]{AT90}. 

Recall the elements $\mathcal{B}_n(t)\in \mathbb{C}_{\infty}(t)$ defined in \S\ref{S:product}. For $n\geq 1$, we further let
\[
R_n:=(\Phi^{-1})^{(n)}\cdots (\Phi^{-1})^{(1)} \in \GL_r(\mathbb{C}_{\infty}(t)).
\]

\begin{lemma} [{cf. \cite[Prop. 5.2.27] {Hua23}}] \label{L:01} Let $R_{n}=(\alpha^{[n]}_{j i})$. Then for each $1\leq j \leq r$, we have
\begin{itemize}
	\item[(i)] $ \alpha^{[n]}_{j 1}=\mathcal{B}_{n-(j-1)}(t)$.
	\item[(ii)] $\alpha^{[n]}_{j  r}=\frac{\mathcal{B}_{n-j}^{(1)}(t)}{t-\theta^q}$.
	\item[(iii)] For each $1\leq m \leq r-2$, we have
	\[
	\alpha^{[n]}_{j (r-m)}=\frac{\mathcal{B}_{n-m-j}^{(m+1)}(t)}{t-\theta^{q^{m+1}}}+\sum_{u=1}^{m}\frac{\mathcal{B}^{(u)}_{(n-1)-(u-1)-(j-1)}(t)}{t-\theta^{q^u}}k_{r-m+u-1}^{(u-r+1)}.
	\]
	\item[(iv)] Recall from \S\ref{S:product} that $\mathcal{P}_n=(R_n)^{\tr}=((\Phi^{-1})^{\tr})^{(1)}\dots ((\Phi^{-1})^{\tr})^{(n)}=(\alpha^{[n]}_{i j}) \in \GL_r(\mathbb{C}_{\infty}(t))$. Then we have 
    \[
    ||(\alpha^{[n]}_{1 j},\dots,  \alpha^{[n]}_{r j })^{\tr}||\leq ||\mathcal{B}_{n-(j-1)}(t)|| + C(q,r)  \]
    for some constant $C(q,r)$ depending only on $q$ and $r$.
\end{itemize}
\end{lemma}

\begin{proof} The first part follows from \cite[Lem. 3.1.4]{Che24}. Observe that 
	\begin{equation}\label{E:prod}
	R_{n+1}=R_{n}^{(1)}(\Phi^{-1})^{(1)}=\begin{pmatrix}
	\mathcal{B}_{n}(t)&\alpha_{12}^{[n]}&\cdots& \cdots &\alpha_{1r}^{[n]}\\
	\mathcal{B}_{n-1}(t)&\vdots & & &\vdots \\
	\vdots&\vdots & & &\vdots \\
	\vdots&\vdots &  & &\vdots \\
	\mathcal{B}_{n-(r-1)}(t)&\alpha_{r2}^{[n]}&\cdots &\cdots & \alpha_{rr}^{[n]}
	\end{pmatrix}^{(1)}\begin{pmatrix}
	\frac{k_1}{t-\theta^q}&\frac{k_2^{(-1)}}{t-\theta^q}&\cdots& \frac{k_{r-1}^{(-(r-2))}}{t-\theta^q}&\frac{1}{t-\theta^q}\\
	1& & & & \\
	 &\ddots & & & \\
	 & & \ddots & & \\
	 & & & 1& 0
	\end{pmatrix}.
	\end{equation}
	Note that part (ii) easily follows from the first part and \eqref{E:prod}. On the other hand, part (iii) follows from the recursive use of the equality
    \[
    \alpha_{j(r-m)}^{[n+1]}=\frac{\mathcal{B}^{(1)}_{n-(j-1)}(t)}{t-\theta^q}k_{r-m}^{(-(r-m)+1)}+(\alpha_{j(r-(m-1))}^{[n]})^{(1)}
    \]
	which indeed follows from part (i) and \eqref{E:prod}. Finally, the last assertion follows from part (i--iii) as well as the fact that 
    \[
    \log_q(||\mathcal{B}_{n}(t)||)\leq -\frac{q^{n+r}-q^r}{q^r-1}    \]
    which is a consequence of our conditions on $k_1,\dots,k_r$ combined with \cite[Prop. 6.9]{EGP14}.
\end{proof}

Let $\Log_{\phi\otimes C^{\otimes k}}=\sum_{n\geq 0}P_n\tau^n$ and note that $P_0=\Id_{rk+1}$. Our next goal is to analyze the norm of the certain entries of $P_n$.  For each $1\leq \ell \leq rk+1$, consider $\mathfrak{b}_\ell^{[n]}=[b_{1,\ell}^{[n]},\dots,b_{r,\ell}^{[n]}]^{\tr}\in \Mat_{r\times 1}(\mathbb{C}_{\infty})$ such that $\mathfrak{b}_\ell^{[n]}$ consists of the last $r$ entry of the $\ell$-th column of $P_n$. More precisely, for $n\geq 0$, we have 
\[
P_n=\begin{pmatrix}
*&\cdots & \cdots & *\\
\vdots & & & \vdots\\
*&\cdots &\cdots  &  *\\
b^{[n]}_{1,1}&\cdots &\cdots & b^{[n]}_{1,rk+1}\\
\vdots&  & & \vdots\\
b^{[n]}_{r,1}&\cdots &\cdots &  b^{[n]}_{r,rk+1}
\end{pmatrix}.
\]

\begin{proposition}[{cf. \cite[Prop. 2.4.3]{AT90}}]\label{P:02} Let $0\leq u \leq k-1$ and $1\leq j \leq r$. The following statements hold.
\begin{itemize}
 \item[(i)] We have 
    \[
\log_q||\mathfrak{b}^{[n]}_{ru+j+1}||\leq -q^n\left(u+1+\frac{q^j}{q^r-1}+\frac{k}{q-1}\right)+\frac{q^r}{q^r-1}+\frac{kq}{q-1}.
\]
   and
    \[
    \log_q||\mathfrak{b}_1^{[n]}||\leq -q^n\left(\frac{q^r}{q^r-1}+\frac{k}{q-1}\right)+\frac{q^r}{q^r-1}+\frac{kq}{q-1}.   
    \]
\item[(ii)] For any tuple $(z_1,\dots,z_{rk+1})\in \mathbb{C}_{\infty}^{rk+1}$ satisfying  
\[
\log_q(|z_1|)<\frac{q^r}{q^r-1}+\frac{k}{q-1} \text{ and } \log_q(|z_{ru+j+1}|)<u+1+\frac{q^j}{q^r-1}+\frac{k}{q-1},
\]
we have, for any $1\leq \ell \leq rk+1$,
$
\sum_{n=0}^{\infty}b_{j,\ell}^{[n]}z_{\ell}^{q^n}<\infty
$.
\end{itemize}
\end{proposition}
\begin{proof} We again note that \cite[Prop. 6.9]{EGP14} and our conditions on the coefficients of $\phi$ yield
\[
\log_q(|\beta_n|)\leq -\frac{q^{n+r}-q^r}{q^r-1}.
\]
On the other hand, by \eqref{E:sigmatautensor} and Lemma \ref{L:01}(iv), we have 
\[
\log_q||\sigma^{-n}(h_{ru+j+1})||\leq \log_q(||(t-\theta^{q^n})^{k-u-1}\mathcal{B}_{n-(r-j)}(t)(\mathbb{S}_{n-1}^{(1)})^{-k}||).
\]
Finally, since by \cite[Cor. 4.5]{Gre22}, $\delta_0^{N_{\rho}}(\sigma^{-n}(h_{ur+j+1}))$ is the $ru+j+1$-st column of $P_n$, by the structure of $\delta_0^{N_{\rho}}$-map described in \S\ref{SS:delta_0 map D otimes C} and Lemma \ref{L:01}, we see that 
\begin{align*}
\log_q(||\delta_0^{N_{\rho}}(\sigma^{-n}(h_{ru+j+1}))||)&\leq \log_q(|(\theta-\theta^{q^n})^{k-u-1}\beta_{n-(r-j)}((\mathbb{S}_{n-1}^{(1)})^{-k}))_{|t=\theta}|)\\
&\leq  (k-u-1)q^n-\frac{q^{n+j}-q^r}{q^r-1}-k\left(\frac{q^{n+1}-q}{q-1}\right)\\
&=q^n\frac{(k-u-1)(q^r-1)-q^j-kq(1+q+\dots+q^{r-1})}{q^r-1}\\
&\ \ \ \ \ +\frac{q^r+kq(1+q+\dots+q^{r-1})}{q^r-1}\\
&=-q^n\frac{(u+1)(q^r-1)+q^j+k(1+q+\dots+q^{r-1})}{q^r-1} \\
&\ \ \ \ \ +\frac{q^r+kq(1+q+\dots+q^{r-1})}{q^r-1}\\
&=-q^n\left(u+1+\frac{q^j}{q^r-1}+\frac{k}{q-1}\right)+\frac{q^r}{q^r-1}+\frac{kq}{q-1}.
\end{align*}
Similarly, for $\mathfrak{b}_1^{[n]}$, we obtain 
\begin{align*}
    \log_q(||\mathfrak{b}_1^{[n]}||)&\leq \log_q(||(\theta-\theta^{q^n})^{k}\beta_{n}((\mathbb{S}_{n-1}^{(1)})^{-k}))_{|t=\theta}|)\\
    &=\log_q(||\beta_{n}((\mathbb{S}_{n-2}^{(1)})^{-k}))_{|t=\theta}|)\\
    &\leq -\frac{q^{n+r}-q^r}{q^r-1}-k\left(\frac{q^n-q}{q-1}\right)\\
    &=-q^n\frac{q^r+k(1+q+\dots+q^{r-1})}{q^r-1}+\frac{q^r+kq(1+q+\dots+q^{r-1})}{q^r-1}\\
    &=-q^n\left(\frac{q^r}{q^r-1}+\frac{k}{q-1}\right)+\frac{q^r}{q^r-1}+\frac{kq}{q-1}
    \end{align*}
as desired. Finally, the last assertion is a consequence of part (i).
\end{proof}

\subsection{The map $\varphi_{\text{tens}}$} \label{SS:varphi map2} Recall the matrix $\Theta\in \GL_r(\mathbb{C}_{\infty}[t])$ from \S\ref{SS:t-motive for Drinfeld}. Set $\mathfrak{S}_{-1}:=\Id_r$, $\mathfrak{S}_0:=\Theta$ and for $n\geq 1$ consider $\mathfrak{S}_{n}:=\Theta^{(n)}\cdots \Theta^{(1)}\Theta $. For $1\leq j \leq r$, recall the $j$-th unit vector $\mathfrak{e}_j\in \Mat_{r\times 1}(\mathbb{F}_q)$ as well as the $\mathbb{C}_{\infty}[\tau]$-basis $\{g_1,\dots,g_{rk+1}\}$ for $M_{\rho}$ from \S\ref{SS:t-motive for D tensor C}. Note  that for $0\leq u \leq k-1$, $g_{ru+j}=(t-\theta)^{u}\mathfrak{m}_j$ and $g_{rk+1}=(t-\theta)^{k}\mathfrak{m}_1$ where each $\mathfrak{m}_j$ is as constructed in \S\ref{SS:t-motive for D tensor C}. In this subsection, analogous to \S\ref{SS:varphi map}, we extend the isomorphism $\widetilde{\varphi_{\text{tens}}}:M_{\rho}\cong \Mat_{1\times r}(\mathbb{C}_{\infty}[t])$ of $\mathbb{C}_{\infty}[t,\tau]$-modules given by  
\begin{multline*}
\widetilde{\varphi_{\text{tens}}}\left(\left(\sum_{n\geq 0}a_{1,n}\tau^n,\dots,\sum_{n\geq 0}a_{rk+1,n}\tau^n\right)\right):=\sum_{\substack{0\leq u \leq k-1\\1\leq j \leq r}}\sum_{n\geq 0}a_{ru+j,n}\mathfrak{e}_j^{\tr}\mathbb{S}_{n-1}^k(t-\theta^{q^n})^u\mathfrak{S}_{n-1}\\+\sum_{n\geq 0}a_{rk+1,n}\mathfrak{e}_1^{\tr}\mathbb{S}_{n}^k\mathfrak{S}_{n-1}, \ \ a_{ru+j,n},a_{rk+1,n}\in \mathbb{C}_{\infty}.
\end{multline*}

Our first goal is to analyze the norm of $
\widetilde{\varphi_{\text{tens}}}(\tau^n(g_{ru+j}))
$ and 
$
\widetilde{\varphi_{\text{tens}}}(\tau^n(g_{rk+1}))
$
for each $n\geq 0$ to establish a well-defined extension of the above isomorphism as well as to prove that it is injective in Proposition \ref{P:inj2}.  For each $n\geq 0$, we first define $\mathfrak{N}_{j,n}\in \Mat_{1\times r}(\mathbb{C}_{\infty}[t])$ so that 
\[
\mathfrak{S}_{n}=\begin{bmatrix}
    \mathfrak{N}_{1,n}\\
    \vdots\\
    \mathfrak{N}_{r,n}
\end{bmatrix}.
\]

\begin{lemma}\label{L:02} We have 
\[
\log_q\left(||\mathfrak{N}_{j,0}||\right)= \begin{cases} 0 & \text{ if } 1\leq j<r\\
1 &\text{ if } j=r.
\end{cases}
\]
Moreover, for $n\geq 1$, we have 
\[
\log_q\left(||\mathfrak{N}_{j,n}||\right)\leq \begin{cases} 0 & \text{ if } n<r-j\\
q^{n-(r-j)}+\log_q\left(||\mathbb{S}_{n-(2r-j)}||  \right) &\text{ if } n\geq r-j.
\end{cases}
\]
\end{lemma}
\begin{proof} Since $k_1,\dots,k_r$ have $|\cdot|$-norm less than or equal to one, the first part immediately follows.  We now prove the second part. Using the conditions on $k_1,\dots,k_{r}$ and a simple computation yield the lemma when $n<r-j$. On the other hand, one can also see that 
\[
\log_q\left(||\mathfrak{N}_{j,r-j}||\right)\leq 1=q^{(r-j)-(r-j)}
\]
and hence, the lemma holds for $n=r-j$. Assume that it holds for $n>r-j$. Note that
\begin{equation}\label{E:induct}
\mathfrak{S}_{n+1}= \begin{bmatrix}
    \mathfrak{N}_{1,n+1}\\
    \vdots\\
    \mathfrak{N}_{r-1,n+1}\\
    \mathfrak{N}_{r,n+1}
\end{bmatrix}=\Theta^{(n+1)} \mathfrak{S}_{n}= \begin{pmatrix}
&1& & &  \\
& & \ddots & & \\
& & & \ddots & \\
& & & &  1\\
t-\theta^{q^{n+1}}&-k_1^{q^{n+1}} & \dots & \dots & -k_{r-1}^{q^{n+1}}
\end{pmatrix}\mathfrak{S}_{n}=\begin{bmatrix}
    \mathfrak{N}_{2,n}\\
    \vdots\\
    \mathfrak{N}_{r,n}\\
    \mathfrak{N}_{r,n+1}
\end{bmatrix}.
\end{equation}
For $1\leq j \leq r-1$, we have
\begin{multline*}
\log_q\left(||\mathfrak{N}_{j,n+1}||\right)=\log_q\left(||\mathfrak{N}_{j+1,n}||\right)\\
\leq q^{n-(r-(j+1))}+\log_q(||\mathbb{S}_{n-(r-(j+1))-r}||= q^{n+1-(r-j)}+\log_q(||\mathbb{S}_{n+1-(r-j)-r}||),
\end{multline*}
implying the desired statement for such $j$. On the other hand, since $|k_j|\leq 1$, using the induction hypothesis for $||\mathfrak{N}_{1,n}||$ and \eqref{E:induct}, we have 
\[
\log_q\left(||\mathfrak{N}_{r,n+1}||\right)\leq q^{n+1}+(q^{n-(r-1)}+\log_q(||\mathbb{S}_{n-(r-1)-r}||))<q^{n+1}+\log_q(||\mathbb{S}_{n+1-r}||),
\]
finishing the proof of the lemma.
\end{proof}

Finally, a direct computation combined with Lemma \ref{L:02} immediately implies the following proposition which establishes bounds for the $||\cdot||$-norm of 
\[
\widetilde{\varphi_{\text{tens}}}(\tau^n(g_{ru+j}))=\mathfrak{e}_j^{\tr}\mathbb{S}_{n-1}^k(t-\theta^{q^n})^u\mathfrak{S}_{n-1}=(t-\theta)^k\cdots (t-\theta^{q^{n-1}})^k(t-\theta^{q^n})^u\mathfrak{N}_{j,n-1}
\]  and 
\[
\widetilde{\varphi_{\text{tens}}}(\tau^n(g_{rk+1}))=\mathfrak{e}_1^{\tr}\mathbb{S}_{n}^k\mathfrak{S}_{n-1}=(t-\theta)^k\cdots (t-\theta^{q^{n}})^k\mathfrak{N}_{1,n-1}
\]
 for each $n\geq 0$.
\begin{proposition}\label{P:01} Let $0\leq u \leq k-1$ and $1\leq j \leq r$.  We have
\begin{multline*}
\log_q|| \widetilde{\varphi_{\text{tens}}}(\tau^n(g_{ru+j}))||
\leq
\begin{cases} q^nu &\text{ if }n\leq 1 \text{ and } j\neq r\\
q^nu+1 &\text{ if } n\leq 1 \text{ and }j=r\\
q^n\left(u+\frac{k}{q-1}\right)-\frac{k}{q-1} &\text{ if } 1< n<r-j\\
q^n\left(u+\frac{k}{q-1}+q^{-1-(r-j)}\right)-\frac{k}{q-1} &\text{ if } r-j\leq n <2r-j\\
q^n\left(u+\frac{k}{q-1}+q^{-1-(r-j)}+\frac{q^{-(2r-j)}}{q-1}\right)-\frac{k+1}{q-1} &\text{ if } n\geq 2r-j
\end{cases}
\end{multline*}
and 
\[
\log_q|| \widetilde{\varphi_{\text{tens}}}(\tau^n(g_{rk+1}))||\leq \begin{cases} q^nk & \text{ if }n\leq1\\
q^{n}\left(k+\frac{k}{q-1}\right)-\frac{k}{q-1} & \text{ if } 1<n<r-1 \\
q^n\left(k+\frac{k}{q-1}+q^{-r}\right)-\frac{k}{q-1} & \text{ if } r-1\leq n <2r-1\\
q^n\left(k+\frac{k}{q-1}+q^{-r}+\frac{q^{-(2r-1)}}{q-1}\right)-\frac{k+1}{q-1} & \text{ if } n\geq 2r-1.
\end{cases}
\]
\end{proposition}

For each $1\leq \ell \leq rk+1$, $1\leq j \leq r$ and $n\geq 0$, define $f_{j,n,\ell}\in \mathbb{C}_{\infty}[t]$ such that 
\begin{equation}\label{E:defeltsf}
\widetilde{\varphi_{\text{tens}}}(\tau^n(g_{\ell}))=[f_{1,n,\ell},\dots,f_{r,n,\ell}]\in \Mat_{1\times r}(\mathbb{C}_{\infty}[t]).
\end{equation}
We also set $f_{j,n,\ell}:=0$ if $\ell>rk+1$.

Recall the polynomial $\mathfrak{p}_{\ell,m}(t)$ for any $\ell\in \mathbb{Z}_{\geq 0}$ and $0\leq m \leq r-1$ introduced in \S\ref{SS:varphi map} as well as the subset $\mathfrak{M}\subset \mathbb{C}_{\infty}$ from Lemma \ref{L:t degree of phi}.  

\begin{lemma}\label{L:t degree of phi2}
Let $n = sr + j$ for $1\leq j\leq r$, $s\in \mathbb{Z}_{\geq 0}$ and let $0\leq u \leq k$.
The following statements hold.
\begin{itemize}
\item[(i)]  For each $1\leq i \leq r$, $f_{i,n,ru+r}$ can be written as an $\mathfrak{M}$-linear combination of polynomials $\mathbb{S}_{n-1}^k(t-\theta^{q^n})^u \mathfrak{p}_{\tilde{s}+1,\tilde{j}-1}(t)$ so that $0\leq \tilde{s} \leq s$, $1\leq \tilde{j}\leq r$ and $\tilde{s}r+\tilde{j}\leq n$. Moreover, 
$\deg_t(f_{j,n,ru+r}) = s+1+kn+u$ and
we have 
\[
f_{j,n,ru+r}=\mathbb{S}_{n-1}^k(t-\theta^{q^n})^u\left( a\mathfrak{p}_{s+1,j-1}(t)+ \sum_{\substack{0\leq \tilde{s}<s\\1\leq \tilde{j}\leq r}}\beta_{\tilde{s},\tilde{j}}\mathfrak{p}_{\tilde{s}+1,\tilde{j}-1}(t)\right)
\]
for some  $a\in \F_q^\times$ and $\beta_{\tilde{s},\tilde{j}}\in \mathfrak{M}$ for each $\tilde{s}$ and $\tilde{j}$.
\item[(ii)] For $0\leq v \leq r-1$, we have $\tau^n(g_{v})=\tau^n(\mathfrak{m}_{v})=\tau^{n-1}(\mathfrak{m}_{v+1})=\tau^{n-1}(g_{v+1})$  and 
$
\tau^{u}(g_v)=\mathfrak{e}_{u+v}^{\tr}
$ whenever $u+v\leq r$. In particular, we have 
\[
\tau^{n}(g_{ru+r-v})=\tau^{n}((t-\theta)^u\mathfrak{m}_{r-v})=\tau^{n-v}((t-\theta)^{u}\mathfrak{m}_{r})=\tau^{n-v}(g_{ru+r}).
\]
 Moreover, if  we let
\[
\overline{j-v}:=\begin{cases}
    j-v & \text{ if } j>v\\
    r-(v-j) & \text{ if } j\leq v
\end{cases}
\ \ \text{and } \ \  
\overline{s}:=\begin{cases}
    s & \text{ if } j>v\\
    s-1 & \text{ if } j\leq v,
\end{cases}
\]
then $\deg_t(f_{\overline{j-v},n,ru +r-v}) = \overline{s}+1+kn+u$. Furthermore, we have 
\[
f_{\overline{j-v},n,ru +r-v}= \mathbb{S}_{n-1}^k(t-\theta^{q^n})^{u}\left( a\mathfrak{p}_{\overline{s}+1,\overline{j-v}-1}(t)+\sum_{\substack{0\leq \tilde{s}<\overline{s}\\1\leq \tilde{j}\leq r}}\beta_{\tilde{s},\tilde{j}}\mathfrak{p}_{\tilde{s}+1,\tilde{j}-1}(t)\right)
\]
for some  $a\in \F_q^\times$ and $\beta_{\tilde{s},\tilde{j}}\in \mathfrak{M}$.
\item[(iii)] $\deg_t(f_{\overline{j-v},n,ru +r-v}) \geq \deg_t(f_{i,n,ru +r-v})$ for $i<\overline{j-v}$.
\item[(iv)] $\deg_t(f_{\overline{j-v},n,ru +r-v}) > \deg_t(f_{i,n,ru +r-v})$ for $i>\overline{j-v}$.
\end{itemize}
\end{lemma}
\begin{proof} The part (i), (iii) and (iv) simply follow from the same analysis applied to $\prod_{\mu=1}^{n}(\Phi^{\tr})^{(n-\mu)}$ in the proof of Lemma \ref{L:t degree of phi} for the matrix $\mathbb{S}_{n-1}^k(t-\theta^{q^n})^u\mathfrak{S}_{n-1}$. We now comment about the proof of part (ii). The first assertion of (ii)   follows from the observation that if $\mathfrak{S}_n=(c_{i,j})_{ij}$ for some $c_{i,j}\in \mathbb{T}$, then 
\[
\mathfrak{S}_{n+1}=\Theta^{(n+1)} \mathfrak{S}_{n}= \begin{pmatrix}
&1& & &  \\
& & \ddots & & \\
& & & \ddots & \\
& & & &  1\\
t-\theta^{q^{n+1}}&-k_1^{q^{n+1}} & \dots & \dots & -k_{r-1}^{q^{n+1}}
\end{pmatrix}\mathfrak{S}_{n}= \begin{pmatrix}
    c_{2,1} & c_{2,2} & \cdots & \cdots & c_{2,r}^{(1)}\\
    \vdots  &  & &  & \vdots\\
    \vdots & & & & \vdots \\
    c_{r-1,1} & c_{r-1,2} & \cdots & \cdots & c_{r-1,r}\\
    * & * & \cdots & \cdots & *\\
    \end{pmatrix}.
\]
The second and the last assertion also follow from the first assertion and part (i).
\end{proof}

For $1\leq \mu \leq r$, $n,\nu\geq 0$ and $1\leq \ell \leq rk+1$, we further define elements $c_{\mu,n,\ell,\nu}\in \mathbb{C}_{\infty}$ given by the equality 
\begin{equation}\label{E:coefc}
f_{\mu,n,\ell}=\sum_{\nu\geq 0}c_{\mu,n,\ell,\nu}t^{\nu}\in \mathbb{C}_{\infty}[t].
\end{equation}

Our next lemma can be also obtained by using Lemma \ref{L:t degree of phi2} and the same idea in the proof of Lemma \ref{L:maxnorm}. We leave the details of its proof to the reader.

\begin{lemma}\label{L:maxnormtensor} Let $n=sr+j$  for $s\in \mathbb{Z}_{\geq 0}$ and $1\leq j\leq r$. Choose $0\leq v\leq r-1$ and $0\leq u \leq k$ as well as consider $\widetilde{\varphi_{\text{tens}}}(\tau^{n}(g_{ru+r-v}))\in \Mat_{1\times r}(\mathbb{C}_{\infty}[t])$. Let $n_0\leq n$, $0\leq u_0\leq k$ and $ \ell\in \mathbb{Z}_{\geq 0} $ be such that $\ell+kn_0+u_0\leq \overline{s}+1+kn+u$. Set \[
\alpha_{n,ru+r-v,\ell+kn_0+u_0}:=\max\{|c_{1,n,ru+r-v,\ell+kn_0+u_0}|,\dots,|c_{r,n,ru+r-v,\ell+kn_0+u_{0}}|\}, 
\]
that is, the maximum among the $|\cdot|$-norms of the $t^{\ell+kn_0+u_0}$-coefficients of the entries of $\widetilde{\varphi_{\text{tens}}}(\tau^{n}(g_{ru+r-v}))$. Then we have
\[
\log_q(\alpha_{n,ru+r-v,\ell+kn_0+u_0})\leq \frac{q^{n+r-1}-q^{n_0+r-1}}{q^r-1}+uq^{n} - u_0q^{n_0}+ k\left(\frac{q^{n}-1}{q-1}-\frac{q^{n_0}-1}{q-1}\right).
\]
\end{lemma}

 Identifying $M_{\rho}$ with $\Mat_{1\times (rk+1)}(\mathbb{C}_{\infty}[\tau])$ by sending $\tau^n(g_{ru+j})$ to  $\tau^n\mathfrak{f}_{ru+j}$ and $\tau^n(g_{rk+1})$ to $\tau^n\mathfrak{f}_{rk+1}$, by a slight abuse of notation, we now denote the aforementioned isomorphism of $\mathbb{C}_{\infty}[t,\tau]$-modules by the map $\widetilde{\varphi_{\text{tens}}}:M_{\rho}\to \Mat_{1\times r}(\mathbb{C}_{\infty}[t])$ given by
\[
\widetilde{\varphi_{\text{tens}}}\left(\left(\sum_{n\geq 0}a_{n,1}\tau^n,\dots,\sum_{n\geq 0}a_{n,rk+1}\tau^n\right)\right):=\left[ \sum_{\ell=1}^{rk+1}\sum_{n\geq 0}a_{n,\ell}f_{1,n,\ell},\dots, \sum_{\ell=1}^{rk+1}\sum_{n\geq 0}a_{n,\ell}f_{r,n,\ell} \right]
\]
where $a_{n,\ell}\in \mathbb{C}_{\infty}$ for each $n\geq0$ and $1\leq \ell rk+1$. 

We now construct the domain of the extension of the map $\widetilde{\varphi_{\text{tens}}}$. To ease the notation in what follows, let us set $\mathfrak{v}_{u}:=q^{\frac{q^{r-1}}{q^r-1}+u+\frac{k}{q-1}}$ for each $0\leq u \leq k$ and define 
\begin{multline*}
\mathbb{M}_{\text{tens}}:=\{\left(\sum_{n=0}^{\infty}a_{n,1}\tau^n,\dots,\sum_{n=0}^{\infty}a_{n,rk+1}\tau^n\right)\in \Mat_{1\times (rk+1)}(\mathbb{C}_{\infty}[[\tau]])\ \ | \ \ 1\leq j \leq r  \\ |a_{n,ru+j}|\mathfrak{v}_u^{q^n}\to 0 
 \text{ as } n\to \infty \}.
\end{multline*}
We further set
\[
\left|\left(\sum_{n=0}^{\infty}a_{n,1}\tau^n,\dots,\sum_{n=0}^{\infty}a_{n,rk+1}\tau^n\right)\right|_{\mathfrak{v}_{\text{tens}}}:=\max_{n}\left\{|a_{n,ru+j}|\mathfrak{v}_u^{q^n}\right\}.
\]
It is clear that $(\mathbb{M}_{\text{tens}},|\cdot|_{\mathfrak{v}_{\text{tens}}})$ forms a normed $\mathbb{C}_{\infty}$-vector space and $(M_{\rho},|\cdot|_{\mathfrak{v}_{\text{tens}}})$ is a dense normed $\mathbb{C}_{\infty}$-vector subspace of $(M_{\rho},|\cdot|_{\mathfrak{v}_{\text{tens}}})$. Moreover, by Proposition \ref{P:02}(ii), we see that for each $1\leq j \leq r$,
\[
\mathcal{H}_{j}:=\left(\sum_{n=0}^{\infty} b_{j,1}^{[n]}\tau^n,\dots, \sum_{n=0}^{\infty} b_{j,rk+1}^{[n]}\tau^n\right) \in \mathbb{M}_{\text{tens}}.
\]
We note that $\mathcal{H}_j$ is the $rk+1-(r-j)$-th entry of the logarithm series $\Log_{\rho}$. Furthermore, comparing the bounds for the $||\cdot||$-norm of  $\tau^n(g_{ru+j})$ in Proposition \ref{P:01}  with $\mathfrak{v}_{u}$,  if we have $(\sum_{n=0}^{\infty}a_{n,1}\tau^n,\dots,\sum_{n=0}^{\infty}a_{n,rk+1}\tau^n)\in \mathbb{M}_{\text{tens}}$ then $\sum_{n=0}^{\infty}a_{n,ru+j}\tau^n(g_{ru+j})$  converges in $\Mat_{1\times (rk+1)}(\mathbb{T})$.

Let $\mathcal{G}\in M_{\rho}\subset \mathbb{M}_{\text{tens}}$. By the ultrametric property of $||\cdot||$ on $\Mat_{1\times r}(\mathbb{C}_{\infty}[t])$, we see that 
\[
||\widetilde{\varphi_{\text{tens}}}(\mathcal{G})||\leq |\mathcal{G}|_{\mathfrak{v}_{\text{tens}}}.
\]
Hence, $\widetilde{\varphi_{\text{tens}}}$ is a continuous and bounded $\mathbb{C}_{\infty}$-linear map. Since  $(\Mat_{1\times r}(\mathbb{T}),||\cdot||)$ is a Banach space over $\mathbb{C}_{\infty}$ and $M_{\rho}$ is dense in $ \mathbb{M}_{\text{tens}}$, there exists a unique bounded extension $\varphi_{\text{tens}}:\mathbb{M}_{\text{tens}}\to \Mat_{1\times r}(\mathbb{T})$ of $\widetilde{\varphi_{\text{tens}}}$ defined by 
\[
\varphi_{\text{tens}}\left(\lim_{n\to \infty}\mathcal{G}_n\right):=\lim_{n\to \infty}\widetilde{\varphi_{\text{tens}}}(\mathcal{G}_n)
\]
provided that $\lim_{n\to \infty}\mathcal{G}_n $ exists and lies in $\mathbb{M}_{\text{tens}}$ (see \cite[Thm. 5.19]{HN01}).

Our final goal is to prove that $\varphi_{\text{tens}}$ is injective. 

\begin{proposition}\label{P:inj2}
Let
\[f = \varphi_{\text{tens}}\left(\left(\sum_{n=0}^{\infty}a_{n,1}\tau^n,\dots, \sum_{n=0}^{\infty}a_{n,rk+1}\tau^n\right)\right). \]
 Then $f=0$ if and only if each $a_{n,\ell}=0$ for $1\leq \ell \leq rk+1$. In particular, $\varphi_{\text{tens}}$ is injective.
\end{proposition}

\begin{proof} We note that the idea of the proof is exactly the same as the idea used to prove Proposition \ref{P:inj} up to certain technical details which we will explain below. Since one direction is obvious, we prove the other direction. Using the elements $c_{\mu,n,\ell,\nu}\in \mathbb{C}_{\infty}$ defined in \eqref{E:coefc},  we have
\begin{multline*}
\varphi_{\text{tens}}\left(\left(\sum_{n=0}^{\infty}a_{n,\ell}\tau^n,\dots, \sum_{n=0}^{\infty}a_{n,rk+1}\tau^n\right)\right)=\left[\sum_{\ell=1}^{rk+1}\sum_{n=0}^{\infty}a_{n,\ell}f_{1,n,\ell},\dots,\sum_{\ell=1}^{rk+1}\sum_{i= 0}^{\infty}a_{n,\ell}f_{r,n,\ell} \right]\\
=\left[\sum_{\nu= 0}^{\infty}\left(\sum_{\ell=1}^{rk+1}\sum_{n=0}^{\infty}a_{n,\ell}c_{1,n,\ell,\nu}\right)t^\nu,\dots, \sum_{\nu= 0}^{\infty}\left(\sum_{\ell=1}^{rk+1}\sum_{n=0}^{\infty}a_{n,\ell}c_{r,n,\ell,\nu}\right)t^{\nu} \right].
\end{multline*}
We then write 
\begin{equation}\label{E:Tate module infinite sums}
f=\begin{pmatrix}
(\sum_{\ell=1}^{rk+1}\sum_{n= 0}^{\infty}a_{n,\ell}c_{1,n,\ell,0}) + (\sum_{\ell=1}^{rk+1}\sum_{n=0}^{\infty}a_{n,\ell}c_{1,n,\ell,1})t + (\sum_{\ell=1}^{rk+1}\sum_{n=0}^{\infty}a_{n,\ell}c_{1,n,\ell,2})t^2+\dots\\
\vdots\\
(\sum_{\ell=1}^{rk+1}\sum_{n=0}^{\infty}a_{n,\ell}c_{r,n,\ell,0}) + (\sum_{\ell=1}^{rk+1}\sum_{n=0}^{\infty}a_{n,\ell}c_{r,n,\ell,1})t + (\sum_{\ell=1}^{rk+1}\sum_{n=0}^{\infty}a_{n,\ell}c_{r,n,\ell,2})t^2+ \dots\\
\end{pmatrix}^{\tr}\in \mathbb{T}^r. 
\end{equation}
  Now let $f=0$. Thus, we have a sequence of infinite series so that $\sum_{\ell=1}^{rk+1}\sum_{n=0}^{\infty}a_{n,\ell}c_{\mu,n,\ell,\nu} = 0$
for all $1\leq \mu\leq r$ and $\nu\geq 0$. 

Assume to the contrary that there exist integers $n_0\in \mathbb{Z}_{\geq 0}$, $0\leq u_0\leq k$ and $1\leq j_0\leq r$ such that $a_{n_0,ru_0+j_0}\neq 0$. We then write $n_0 = s_0r+j_0'$ with $s_0\in \mathbb{Z}_{\geq 0}$ and $1\leq j_0'\leq r$. Let us denote $(j_{0}+j_{0}')\pmod{r}$ by $\mathfrak{j}_{0}$ (with the convention that $\mathfrak{j}_{0}=0$ if $j_0+j_0'\equiv 0\pmod{r}$). Note, by Lemma \ref{L:t degree of phi2}(ii) that the $\mathfrak{j}_{0}$-th coordinate (with the convention that we refer to the $r$-th coordinate if $\mathfrak{j}_{0}=0$) of $\tau^{n_0}(g_{ru_0+j_0})$ is given as 
\begin{equation}\label{E:form2}
f_{\mathfrak{j}_0,n_0,ru_0+j_0}=a(t-\theta^{q^{n_0}})^{u_0}\mathbb{S}_{n_0-1}^k\mathfrak{p}_{\overline{s_0}+1,\mathfrak{j}_0-1}(t)+\text{terms in $t$ degree lower than $ \overline{s_0}+1+kn_0+u_0 $ }
\end{equation}
where $a\in \mathbb{F}_q^{\times}$, $\overline{s_0}=s_0$ if $j_0'+j_0> r$ and $\overline{s_0}=s_0-1$ if $j_0'+j_0\leq  r$. By Lemma \ref{L:t degree of phi2}(ii), we see that $c_{\mathfrak{j}_0,n_0,ru_0+j_0,kn_0+u_0+\overline{s_0}+1}=a$ and 
\begin{equation}\label{E:series3}
\sum_{\ell=1}^{rk+1}\sum_{n= 0}^{\infty}a_{n,\ell}c_{\mathfrak{j}_0,n,\ell,kn_0+u_0+\overline{s_0}+1} = a_{n_0,ru_0+j_0}a+\sum_{(n,\ell)\neq (n_0,ru_0+j_0)}a_{n,\ell}c_{\mathfrak{j}_0,n,\ell,kn_0+u_0+\overline{s_0}+1} =0.
\end{equation}
Now we examine the $t^{\overline{s_0}+1+kn_0+u_0}$-coefficient of the $\mathfrak{j}_0$-th coordinate of $f$, which is the series
in the left hand side of \eqref{E:series3}. Since $a_{n_0,ru_0+j_0}\neq 0$ and the norm $| \cdot |$ is nonarchimedian, there must exist integers $n_1\in \mathbb{Z}_{\geq 0}$, $0\leq u_1\leq k$ and $1\leq j_1\leq r$  such that
\begin{equation}\label{E:boundan2}
|a_{n_0,ru_0+j_0}a|=|a_{n_0,ru_0+j_0}|  \leq  |a_{n_1,ru_1+j_1} c_{\mathfrak{j}_0,n_1,ru_1+j_1,kn_0+u_0+\overline{s_0}+1} |.
\end{equation}
Since, by Lemma \ref{L:t degree of phi2}, for each $0\leq \tilde{u}\leq k$ and $1\leq \tilde{i} \leq r$, the coefficient of $t^{\overline{s_0}+1+kn_0+u_0}$ in the $\mathfrak{j}_0$-th coordinate of $\tau^{w}(g_{r\tilde{u}+\tilde{i}})$ is zero for $w<n_0$, we must have $n_1>n_0$. Now let us write $n_1=s_1r+j_1'$ with $s_1\in \mathbb{Z}_{\geq 0}$ and $1\leq j_1'\leq r$. Then, by Lemma \ref{L:maxnormtensor}, we obtain
\begin{multline}\label{E:est1}
\log_q(|c_{\mathfrak{j}_0,n_1,ru_1+j_1,kn_0+u_0+\overline{s_0}+1}|)\leq \log_q(\alpha_{n_1,ru_1+j_1,kn_0+u_0+\overline{s_0}+1}) \\
\leq \frac{q^{n_1+r-1}}{q^r-1}+u_1q^{n_1}+\frac{kq^{n_1}-k}{q-1}-\left(\frac{q^{n_0+r-1}}{q^r-1}+u_0q^{n_0}+\frac{kq^{n_0}-k}{q-1}\right).
\end{multline}

 Thus, \eqref{E:boundan2} and \eqref{E:est1} yield
\[
|a_{n_0,ru_0+j_0}||\theta|^{\frac{q^{n_0+r-1}}{q^r-1}+u_0q^{n_0}+\frac{kq^{n_0}-k}{q-1}} \leq |a_{n_1,ru_1+j_1}||\theta|^{\frac{q^{n_1+r-1}}{q^r-1}+u_1q^{n_1}+\frac{kq^{n_1}-k}{q-1}}.
\]
Finally, as in the proof of Proposition \ref{P:inj}, applying this algorithm once again and hence continuing in this manner, we obtain a chain of integers  $n_0<n_1<n_2<\cdots<n_{w}<\cdots $ and an increasing sequence $\{|a_{n_w,ru_{w}+j_{w}} \theta^{\frac{q^{n_{w}+r-1}}{q^r-1}+u_{w}q^{n_{w}}+\frac{kq^{n_{w}}-k}{q-1}}|\}_{w\geq 0}$. On the other hand, by the assumption on elements in $\mathbb{M}_{\text{\text{tens}}}$, we obtain
\begin{multline*}
\left |a_{n_w,ru_{w}+j_{w}} \theta^{\frac{q^{n_{w}+r-1}}{q^r-1}+u_{w}q^{n_{w}}+\frac{kq^{n_{w}}-k}{q-1}}\right | = |a_{n_w,ru_{w}+j_{w}}|(|\theta|^{\frac{q^{r-1}}{q^r-1}+u_{w}+\frac{k}{q-1}})^{q^{n_{w}}}\\
= |a_{n_w,ru_{w}+j_{w}}|\mathfrak{v}_{u_{w}}^{q^{n_{w}}}  \to 0
\end{multline*}
as $w\to \infty$. But this contradicts to the fact that $\{|a_{n_w,ru_{w}+j_{w}} \theta^{\frac{q^{n_{w}+r-1}}{q^r-1}+u_{w}q^{n_{w}}+\frac{kq^{n_{w}}-k}{q-1}}|\}_{w\geq 0}$ is an increasing sequence. Hence $a_{n_0,ru_0+j_0}$ must be equal to zero, finishing the proof of the proposition.
\end{proof}

\subsection{The element $\eta_n$}\label{S:theelementeta} Our goal in this subsection is similar to what we aim in \S \ref{SS:alpha convergence}. More precisely, we define an element $\eta_n\in \mathbb{C}_{\infty}^r\otimes \mathbb{C}_{\infty}^r$ for each $n\in \mathbb{Z}_{\geq 1}$ so that in \eqref{E:tensfactor}, we use it 
to interpret $((1\otimes t) - (t\otimes 1))G_{k,n}^\otimes\in \mathbb{T}^r\otimes \mathbb{T}^r$ in terms of matrices $\Pi_n$ and $\Psi_n$ defined in \S\ref{S:product}.

 For any positive integer $n$, we now consider
\begin{multline*}
\eta_n:=(\mathfrak{U}^{-1})^{(n)}(\tilde{V}^{\tr})^{(n)}\mathfrak{e}_{1}\otimes \mathfrak{e}_{1}^{\tr}(((\tilde{V}^{(-1)})^{-1})^{\tr})^{(n+1)}\mathfrak{U}^{(n)} +\dots +\\
(\mathfrak{U}^{-1})^{(n)}(\tilde{V}^{\tr})^{(n)}\mathfrak{e}_{r}\otimes \mathfrak{e}_{r}^{\tr}(((\tilde{V}^{(-1)})^{-1})^{\tr})^{(n+1)}\mathfrak{U}^{(n)} \in \mathbb{T}^{r}\otimes \mathbb{T}^{r}.
\end{multline*}
Observe that 
\[
\eta_n=(\mathfrak{U}^{-1})^{(n)}(\tilde{V}^{\tr})^{(n)}(\mathfrak{e}_{1}\otimes \mathfrak{e}_{1}^{\tr}+\dots+ \mathfrak{e}_{r}\otimes \mathfrak{e}_{r}^{\tr}) ((\tilde{V}^{-1})^{\tr})^{(n)}\mathfrak{U}^{(n)}.
\]
Thus, since $\mathfrak{e}_{1}\otimes \mathfrak{e}_{1}^{\tr}+\dots+ \mathfrak{e}_{r}\otimes \mathfrak{e}_{r}^{\tr}=\eta_n=\sum_{i=1}^{r}\mathfrak{d}_i\otimes \mathfrak{c}_i$, we finally obtain our next theorem. 
\begin{theorem}\label{T:alpha2} We have
	\[
	\eta_n=\sum_{i=1}^{r}\mathfrak{d}_i\otimes \mathfrak{c}_i=\mathfrak{e}_{1}\otimes \mathfrak{e}_{1}^{\tr}+\dots+ \mathfrak{e}_{r}\otimes \mathfrak{e}_{r}^{\tr}.
	\]
	In particular, via the identification in Remark \ref{R:1}, $\eta_n=\Id_r$ for each $n\geq 1$.
\end{theorem}

\subsection{Proof of Theorem \ref{T:2}}\label{S:proofoflastresult} To prove our second main result, we this subsection, we first consider the matrix 
\[
\mathfrak{T}:=\rho_{\theta}^{\tr}= \begin{bmatrix}
\theta& \bovermat{$r(k-1)$-many}{0&\dots &0} &\tau& 0&\dots&0 &\tau k_1^{(-1)}\\
&\ddots& & & &\ddots & & & \vdots\\
& & \ddots &  & & &\ddots & &   \tau k_{r-1}^{(-1)}\\
& &  &   \ddots& & & &\tau &  \tau k_r^{(-1)} \\
1 &  &   & & \ddots& & & &  0\\
&\ddots &  & & &\ddots & & &  \vdots\\
& & \ddots &  & & & \ddots& &  \vdots\\
& &  & \ddots  & & & &\ddots &0\\ 
& &  &   & 1& & & &  \theta
\end{bmatrix}.
\]
Thus, we have
$
t\cdot \mathfrak{g}=\mathfrak{T}^{\tr} \mathfrak{g}
$
and 
$
t\cdot \mathfrak{h}=(\mathfrak{T}^{*})^{\tr}\mathfrak{h}.
$ In this case, we write
\[
\Theta_{\rho,\tau}=\begin{bmatrix}
0&\dots&\dots & \dots &  &\dots& 0 \\
\vdots & & & & & &  \\
0&\dots&\dots & \dots & &\dots& 0 \\
\tau &  & & & & &   \\
& \ddots & & & & &   \\
& &\tau& & \\
k_1\tau &\dots &k_{r-1}\tau& k_r\tau &0 &\dots& 0 
\end{bmatrix}
\]
where we note that the first $r(k-1)+1$-rows of $\Theta_{\rho,\tau}$ are zero. Furthermore, the formula given in \eqref{E:Gn tensor def} for the Anderson $t$-module $\rho$, which we denote as $G_{k,n}^{\otimes}$, reduces to
\[
G_{k,n}^{\otimes}= \sum_{i=0}^n\sum_{\ell=1}^{rk+1}\sigma^{-i}(h_\ell)\otimes \tau^{i}(g_\ell).
\]

By Proposition \ref{P:Gntensor factorization}, we obtain the following.
\begin{proposition}\label{P:tensorcase} We have
\begin{multline*}
(1\otimes t-t\otimes 1)G_{k,n}^{\otimes}=\sigma^{-n}(h_{r(k-1)+2})\otimes \tau^{n+1}(g_1)+\dots+\sigma^{-n}(h_{rk})\otimes \tau^{n+1}(g_{r-1})\\+\sigma^{-n}(h_{rk+1})\otimes \tau^{n+1}(\mathfrak{c}_1)
-\sum_{j=2}^{r+1}\sigma(h_{r(k-1)+j})\otimes g_{j-1}-\sum_{j=1}^{r}k_j^{(-1)}\sigma(h_{rk+1})\otimes g_{j}.
\end{multline*}
\end{proposition}

Put $\tilde{\gamma}:=\sum_{j=2}^{r+1}\sigma(h_{r(k-1)+j})\otimes g_{j-1}+\sum_{j=1}^{r}k_j^{(-1)}\sigma(h_{rk+1})\otimes g_{j}$. Then, by using Proposition \ref{P:tensorcase} as well as the definition of $\sigma-$ and $\tau-$ action on $\tilde{N}_{\rho}$ and $M_{\rho}$ respectively, we have
\begin{equation}\label{E:tensfactor}
\begin{split}
&(1\otimes t-t\otimes 1)G_{k,n}^{\otimes}+\tilde{\gamma}\\
&=\sigma^{-n}(h_{r(k-1)+2})\otimes \tau^{n+1}(g_1)+\dots+\sigma^{-n}(h_{rk})\otimes \tau^{n+1}(g_{r-1}) +\sigma^{-n}(h_{rk+1})\otimes \tau^{n+1}(\mathfrak{c}_1)\\
&=\sum_{\mu=1}^{rk+1}\sigma^{-n}(h_\mu)\tau^n(\mathfrak{g}^{\tr}\Theta_{\rho,\tau}^{\tr}\mathfrak{f}_\mu )\\
&=\widetilde{\mathcal{P}}^k_n(\tilde{V}^{\tr})^{(n)}\mathfrak{e}_{1}\otimes \mathfrak{e}_{1}^{\tr}(((\tilde{V}^{(-1)})^{-1})^{\tr})^{(n+1)}\widetilde{\mathcal{S}}^k_n +\dots + \widetilde{\mathcal{P}}^k_n(\tilde{V}^{\tr})^{(n)}\mathfrak{e}_{r}\otimes \mathfrak{e}_{r}^{\tr}(((\tilde{V}^{(-1)})^{-1})^{\tr})^{(n+1)}\widetilde{\mathcal{S}}^k_n\\
&=(t-\theta^q)^{-k}\cdots (t-\theta^{q^n})^{-k}V^{\tr}\Pi_{n-1}^{(1)}\eta_n (\Psi_n^{(-1)})^{\tr}(t-\theta)^{k}(t-\theta^q)^{k}\cdots (t-\theta^{q^n})^{k}\\
&=(-1)^k \left((-\theta)^{q/(q-1)}\prod_{i=1}^n\left(1-\frac{t}{\theta^{q^i}}\right)^{-1}\right)^k V^{\tr}\Pi_{n-1}^{(1)}\eta_n (\Psi_n^{(-1)})^{\tr}\left((-\theta)^{-1/(q-1)}\prod_{i=0}^n\left(1-\frac{t}{\theta^{q^i}}\right)\right)^k.
\end{split}
\end{equation}

We now let $\tilde{\mathfrak{e}}_{1}:=v_{1r}\mathfrak{e}_1^{\tr}$ and for $2\leq j \leq r$, set 
\[
\tilde{\mathfrak{e}}_{j}:=\sum_{i=j}^rv_{i(r-(j-1))}\mathfrak{e}_i^{\tr}.
\]
Recall the projection $p_{i}:\mathbb{C}_{\infty}^{rk+1}\to \mathbb{C}_{\infty}$ onto the $i$-th coordinate as well as the entire functions $F_{\tau^i}:\mathbb{C}_{\infty}\to \mathbb{C}_{\infty}$ for each $1\leq i \leq r-1$ defined in \S \ref{S:Log of Drinfeld Modules}. Recall also the fundamental periods $\lambda_1,\dots,\lambda_r$ of $\phi$ defined in \S \ref{S:Log of Drinfeld Modules}.

Recall from \S\ref{SS:Tate twists intro}, the maps $\mathcal{M}_{\text{tens}}:=\varphi_{\text{tens}}^{-1}$ and $\mathcal{M}_{\text{tens},\zz}:=\delta_{1,\zz}^{M_{\rho}}\circ \varphi_{\text{tens}}^{-1}$ for $\zz\in \mathbb{C}_{\infty}^{rk+1}$. Recall also the fundamental periods $\lambda_1,\dots,\lambda_r$ of $\phi$ and the row vector $\opi=(\lambda_1,\dots,\lambda_r)$.

\begin{theorem}\label{T:last} We have 
	\begin{multline*}
p_{rk+1-(j-1)}(\Log_{\rho})=\\ \begin{cases}
\mathcal{M}_{\text{tens}}\left(\frac{\tilde{\pi}^k}{\omega_C^k(\theta-t)}\opi(\Psi^{\tr})^{(-1)}\right) & \text{ if } j=1\\
\mathcal{M}_{\text{tens}}\left(\frac{\tilde{\pi}^k}{\omega_C^k(t-\theta)}(F_{\tau^{r-(j-1)}}(\lambda_1),\dots, F_{\tau^{r-(j-1)}}(\lambda_r))(\Psi^{\tr})^{(-1)}\right) & \text{ if } 2\leq j \leq r.
\end{cases}
\end{multline*}
Let $\zz\in \mathbb{C}_{\infty}^{rk+1}$ be an element in the domain of convergence of $\Log_{\phi\otimes C^{\otimes k}}$. Then
	\begin{multline*}
p_{rk+1-(j-1)}(\Log_{\rho}(\zz))=\\ \begin{cases}
\mathcal{M}_{\text{tens},\zz}\left(\frac{\tilde{\pi}^k}{\omega_C^k(\theta-t)}\opi(\Psi^{\tr})^{(-1)}\right) & \text{ if } j=1\\
\mathcal{M}_{\text{tens},\zz}\left(\frac{\tilde{\pi}^k}{\omega_C^k(t-\theta)}(F_{\tau^{r-(j-1)}}(\lambda_1),\dots, F_{\tau^{r-(j-1)}}(\lambda_r))(\Psi^{\tr})^{(-1)}\right) & \text{ if } 2\leq j \leq r.
\end{cases}
\end{multline*}
\end{theorem}
\begin{proof} Observe that for each $1\leq j \leq r$ and $n\geq 0$, if we set $\sigma^{-n}(h_{r(k-1)+j+1})=[\xi_{j1,n},\dots, \xi_{jr,n}]^{\tr}\in \mathbb{C}_{\infty}(t)^{r} $, then 
\begin{multline*}
\delta_0^{N_{\rho}}(\sigma^{-n}(h_{r(k-1)+j+1}))=[
	*,
	\dots,
	*,\\
	(\xi_{jr,n})|_{t=\theta}v_{r1},
	\sum_{j=r-1}^r(\xi_{ji,n})|_{t=\theta}v_{j2},
	\dots,
	\sum_{i=2}^r(\xi_{ji,n})|_{t=\theta}v_{i(r-1)},
	(\xi_{j1,n})|_{t=\theta}v_{jr}
	]^{\tr}.
\end{multline*}
To ease the notation, for each $n\geq 1$, we further set 
\[
\mathfrak{u}_{1,n}:=\left((-\theta)^{q/(q-1)}\prod_{i=1}^n\left(1-\frac{t}{\theta^{q^i}}\right)^{-1}\right) \text{ and }
\mathfrak{u}_{2,n}:=\left((-\theta)^{-1/(q-1)}\prod_{i=0}^n\left(1-\frac{t}{\theta^{q^i}}\right)\right).
\]
Note that 
\begin{equation}\label{E:prooftens0}
\begin{split}
&\varphi_{\text{tens}}\left(\lim_{n\to \infty}p_{rk+1-(j-1)}\left(\sum_{\ell=1}^{rk+1}\sum_{i=0}^{n}\delta_0^{N_{\rho}}(\sigma^{-i}(h_{\ell}))\tau^{i}(g_{\ell})\right)\right)\\
    &=\varphi_{\text{tens}}\left(\lim_{n\to \infty}\sum_{\ell=1}^{rk+1}\sum_{i=0}^{n}b_{(r-(j-1)),\ell}^{[i]}\tau^{i}(g_{\ell})\right)\\
    &=\lim_{n\to \infty}\widetilde{\varphi_{\text{tens}}}\left(\sum_{\ell=1}^{rk+1}\sum_{i=0}^{n}b_{(r-(j-1)),\ell}^{[i]}\tau^{i}(g_{\ell})\right)\\
    &=\lim_{n\to \infty}\frac{1}{t-\theta}\widetilde{\varphi_{\text{tens}}}\left(\sum_{\ell=1}^{rk+1}\delta_0^{N_{\rho}}(\sigma^{-n}(h_{\ell}))\tau^n(\mathfrak{g}^{\tr}\Theta_{\rho,\tau}^{\tr}\mathfrak{f}_\mu)\right)\\
    &=\lim_{n\to \infty}\frac{(-1)^k}{t-\theta}\tilde{\mathfrak{e}}_{j}V^{\tr}\mathfrak{u}_{1,n}^k(\Pi_{n-1}^{(1)})_{|t=\theta}\eta_n (\Psi_n^{(-1)})^{\tr}\mathfrak{u}_{2,n}^k \\
    &=\frac{(-1)^k}{t-\theta}\tilde{\mathfrak{e}}_jV^{\tr}(\tilde{\Upsilon}^{(1)})_{|t=\theta}(\tilde{\Psi}^{\tr})^{(-1)}\in \Mat_{1\times r}(\mathbb{T}).
    \end{split}
    \end{equation}
Here the first equality follows from \cite[Cor. 4.5]{Gre22}. For the second equality, under the identification between $M_{\rho}$ and $\Mat_{1\times (rk+1)}(\mathbb{C}_{\infty}[\tau])$ described in \S\ref{SS:varphi map2}, we note that 
\begin{multline*}
\lim_{n\to \infty}\sum_{\ell=1}^{rk+1}\sum_{i=0}^{n}b_{(r-(j-1)),\ell}^{[i]}\tau^{i}(g_{\ell})=\lim_{n\to \infty}(\sum_{i=0}^{n}b_{(r-(j-1)),1}^{[i]}\tau^{i},\dots, \sum_{i=0}^{n}b_{(r-(j-1)),rk+1}^{[i]}\tau^{i})\\
=(\sum_{i=0}^{\infty}b_{(r-(j-1)),1}^{[i]}\tau^{i},\dots, \sum_{i=0}^{\infty}b_{(r-(j-1)),rk+1}^{[i]}\tau^{i})
=\mathcal{H}_{r-(j-1)}\in \mathbb{M}_{\text{tens}}.
\end{multline*}
The third equality, recalling the last $r$-entry of $d[\theta]$ from Example \ref{Ex:1}(iii) for $\phi\otimes C^{\otimes k}$, follows from Proposition \ref{P:deltazeromap} and the fact that the map $\widetilde{\varphi_{\text{tens}}}$ is $\mathbb{C}_{\infty}[t]$-linear, the fourth equality follows from applying $\delta_{0}^{N_{\phi}}\otimes 1$ to \eqref{E:tensfactor} as well as the structure of $\delta_{0}^{N_{\phi}}$-map described above and finally the last equality follows from Theorem \ref{T:product}, \eqref{E:rigiddual} and Theorem \ref{T:alpha2}.

On the other hand, using the definition of $\tilde{\Upsilon}$ and $\tilde{\Psi}$, we first obtain
\begin{equation}\label{E:prooftens1}
\frac{(-1)^k}{t-\theta}\tilde{\mathfrak{e}}_jV^{\tr}(\tilde{\Upsilon}^{(1)})_{|t=\theta}(\tilde{\Psi}^{\tr})^{(-1)}=\frac{\tilde{\pi}^k}{\omega_C^k(t-\theta)}\tilde{\mathfrak{e}}_jV^{\tr}(\Upsilon^{(1)})_{|t=\theta}(\Psi^{\tr})^{(-1)}.
\end{equation}
Moreover, observe that the identity $(\tilde{V}^{-1})^{\tr}\tilde{V}^{\tr}=\Id_{r}$ implies
	\begin{equation}\label{E:prooftens2}
	\tilde{\mathfrak{e}}_jV^{\tr}=\begin{cases} (k_1,\dots,k_r) & \text{ if } j=1\\
	\mathfrak{e}_{r-(j-1)}^{\tr} & \text{ if } 2\leq j \leq r.
	\end{cases}
	\end{equation}
	Thus, using \eqref{E:prooftens1}, \eqref{E:prooftens2} and \cite[(3.4.3), (3.4.5)]{CP12}, we obtain
    \begin{multline}\label{E:prooftens3}
\frac{(-1)^k}{t-\theta}\tilde{\mathfrak{e}}_jV^{\tr}(\tilde{\Upsilon}^{(1)})_{|t=\theta}(\tilde{\Psi}^{\tr})^{(-1)}\\=\begin{cases}
\frac{\tilde{\pi}^k}{\omega_C^k(\theta-t)}(\lambda_1,\dots, \lambda_r)(\Psi^{\tr})^{(-1)} & \text{ if } j=1\\
\frac{\tilde{\pi}^k}{\omega_C^k(t-\theta)}(F_{\tau^{r-(j-1)}}(\lambda_1),\dots, F_{\tau^{r-(j-1)}}(\lambda_r))(\Psi^{\tr})^{(-1)} & \text{ if } 2\leq j \leq r.
\end{cases}
    \end{multline}
Since, by Proposition \ref{P:inj2}, the map $\varphi_{\text{tens}}$ is injective, using \eqref{E:prooftens0} and \eqref{E:prooftens3}, we obtain
\begin{multline*}
    p_{rk+1-(j-1)}(\Log_{\rho})=\mathcal{H}_{r-(j-1)}=\varphi_{\text{tens}}^{-1}\left( \frac{(-1)^k}{t-\theta}\tilde{\mathfrak{e}}_jV^{\tr}(\tilde{\Upsilon}^{(1)})_{|t=\theta}(\tilde{\Psi}^{\tr})^{(-1)} \right)\\=\varphi_{\text{tens}}^{-1}\left( \frac{\tilde{\pi}^k}{\omega_C^k(\theta-t)}(\lambda_1,\dots, \lambda_r)(\Psi^{\tr})^{(-1)}  \right) =\mathcal{M}_{\text{tens}}\left( \frac{\tilde{\pi}^k}{\omega_C^k(\theta-t)}(\lambda_1,\dots, \lambda_r)(\Psi^{\tr})^{(-1)}  \right)
    \end{multline*}
if $j=1$
and 
\begin{multline*}
    p_{rk+1-(j-1)}(\Log_{\rho})=\mathcal{H}_{r-(j-1)}=\varphi_{\text{tens}}^{-1}\left( \frac{(-1)^k}{t-\theta}\tilde{\mathfrak{e}}_jV^{\tr}(\tilde{\Upsilon}^{(1)})_{|t=\theta}(\tilde{\Psi}^{\tr})^{(-1)} \right)\\=\varphi_{\text{tens}}^{-1}\left( \frac{\tilde{\pi}^k}{\omega_C^k(\theta-t)}(\lambda_1,\dots, \lambda_r)(\Psi^{\tr})^{(-1)}  \right) \\=\mathcal{M}_{\text{tens}}\left( \frac{\tilde{\pi}^k}{\omega_C^k(t-\theta)}(F_{\tau^{r-(j-1)}}(\lambda_1),\dots, F_{\tau^{r-(j-1)}}(\lambda_r))(\Psi^{\tr})^{(-1)} \right)
    \end{multline*}
if $2\leq j \leq r$ which finishes the proof of the first assertion. Finally, by Theorem \ref{T:log} and the first assertion, we obtain the second assertion.
\end{proof}
We finish this subsection with the proof of Corollary \ref{C:corspecvaluestens}.
\begin{proof}[{Proof of Corollary \ref{C:corspecvaluestens}}] Let $\phi$ be a Drinfeld module of rank $2$ given as in \eqref{E:drinfeld} such that $k_1\in \mathbb{F}_q$ and $k_2\in \mathbb{F}_q^{\times}$.  We recall the Drinfeld module $\tilde{\phi}$ given by
\[
\tilde{\phi}_{\theta}=\theta-k_1k_2^{-1}\tau+k_2^{-1}\tau^2.
\] 
By \cite[Rem. 5.6]{Gez21}, we know that $L(\phi,1)=L(\tilde{\phi}^{\vee},0)$. Using \cite[Thm. 5.9]{Gez21}, we have 
\[
L(\phi,k+1)=\det\begin{bmatrix}
p_{2k}(\Log_{\rho}(\zz_{2k}))&p_{2k}(\Log_{\rho}(\zz_{2k+1}))\\
p_{2k+1}(\Log_{\rho}(\zz_{2k}))&p_{2k+1}(\Log_{\rho}(\zz_{2k+1}))
\end{bmatrix}.
\]
Now, the corollary is a simple consequence of Theorem \ref{T:last}.
\end{proof}

%%%%%%%%%%%%%%%%%%%%%%%%%%%%%

\end{document}